\newtheorem{thm}{Theorem}[section]
\newtheorem{lem}[thm]{Lemma}
\newtheorem{prop}[thm]{Proposition}
\newtheorem{coro}[thm]{Corollary}
\newtheorem{Def}[thm]{Definition}
\newtheorem{rem}[thm]{Remark}
\def\R{\mathbb{R}}
\def\be{\begin{equation}}
\def\ee{\end{equation}}
\def\g{{\bf g}}
\def\n{{\boldsymbol n}}
\def\p{\partial}
\def\grad{\boldsymbol{\nabla}}
\def\div{\grad\cdot}
\def\O{\Omega}
\def\G{\Gamma}
\def\sig{\sigma}
\def\a{\alpha}
\def\eps{\epsilon}
\def\x{{\boldsymbol x}}
\def\br{{\boldsymbol r}}
\def\d{{\rm d}}
\def\ov#1{\overline{#1}}
\def\wt#1{\widetilde{#1}}
\def\bphi{{\boldsymbol{\phi}}}
\def\beps{{\boldsymbol{\varepsilon}}}
\def\bF{{\boldsymbol  F}}
\def\bI{{\boldsymbol  I}}
\def\bR{{\boldsymbol  R}}
\def\bU{{\boldsymbol  U}}
\def\bX{{\boldsymbol  X}}
\def\bY{{\boldsymbol  Y}}
\def\bu{{\boldsymbol  u}}
\def\bv{{\boldsymbol v}}
\def\bx{{\boldsymbol x}}
\def\bw{{\boldsymbol w}}
\def\0{{\bf 0}}
\def\1{{\bf 1}}
\def\bbI{{\mathbb I}}
\def\bbK{{\mathbb K}}
\def\Aa{\mathcal{A}}
\def\Bb{\mathcal{B}}
\def\Dd{\mathcal{D}}
\def\Ff{\mathcal{F}}
\def\Gg{\mathcal{G}}
\def\Jj{\mathcal{J}}
\def\Kk{\mathcal{K}}
\def\Oo{\mathcal{O}}
\def\Rr{\mathcal{R}}
\def\Ss{\mathcal{S}}
\def\Tt{\mathcal{T}}
\def\Uu{\mathcal{U}}
\def\Vv{\mathcal{V}}
\def\Ww{\mathcal{W}}
\def\bUu{\boldsymbol{\mathcal{U}}}
\def\nn{\nonumber}
\def\bp{{\boldsymbol{p}}}
\def\bPi{{\boldsymbol{\Pi}}}
\def\bsig{{\boldsymbol{\sig}}}
\def\bPhi{{\boldsymbol{\Phi}}}
\def\ind{{\mathds 1}}
\def\Tr{\operatorname{Tr}}
\def\bHh{{\boldsymbol{\mathcal H}}}
\def\gravmech{{\boldsymbol f}}
\def\triplenorm#1{|\mskip -2 mu|\mskip -2 mu|#1|\mskip -2mu |\mskip -2mu|}
\newcounter{cst}
\newcommand{\ctel}[1]{C_{\refstepcounter{cst}\label{#1}\thecst}}
\newcommand{\cter}[1]{C_{\ref{#1}}}
\def\clemrev#1{{#1}}
\begin{document}

\title{A global existence result for weakly coupled two-phase poromechanics}
\author{Jakub W. Both}
\address{Jakub W. Both (\href{mailto:Jakub.Both@uib.no}{\tt Jakub.Both@uib.no}): Center for Modeling of Coupled Subsurface Dynamics, Department of Mathematics, University
of Bergen, P.O. Box 7803, 5020 Bergen, Norway}
\author{Cl\'ement Canc\`es}       
\address{Clément Cancès (\href{mailto:clement.cances@inria.fr}{\tt clement.cances@inria.fr}): Inria, Univ. Lille, CNRS, UMR 8524 - Laboratoire Paul Painlevé, F-59000 Lille, France.}

\keywords{Poromechanics, two-phase porous media flows, global existence }
\subjclass[2010]{76S05, 35M33, 35K65}

\maketitle

\begin{abstract}
 Multiphase poromechanics describes the evolution of multiphase flow in deformable porous media. Mathematical models for such multiphysics system are inheritely nonlinear, potentially degenerate and fully coupled systems of partial differential equations. In this work, we present a thermodynamically consistent multiphase poromechanics model falling into the category of Biot equations and obeying to a generalized gradient flow structure. It involves capillarity effects, degenerate relative permeabilities, and gravity effects. In addition to established models it introduces a Lagrange multiplier associated to a bound constraint on the effective porosity in particular ensuring its positivity. We establish existence of global weak solutions under the assumption of a weak coupling strength, implicitly utilizing the gradient flow structure, as well as regularization, a Faedo-Galerkin approach and compactness arguments. This comprises the first global existence result for multiphase poromechanics accounting for degeneracies that are consistent with the multiphase nature of the flow.
\end{abstract}


\section{Two-phase poromechanics: motivation, formulation and main result}\label{sec:2}

\subsection{Introduction}
Poromechanics, modeled by the prototypical Biot equations, describes the two-way coupled interaction between flow in porous media and its macro- and microscopic deformation. Since the seminal works in mathematical modelling by von Terzaghi~\cite{terzaghi1925erdbaumechanik} and Biot~\cite{Biot41}, the study of poromechanics has been extended to various applications across engineering~\cite{Coussy04}. With relevance ranging from geotechnical, environmental, over industrial to biomedical engineering, poromechanics plays a paramount role.

In many engineering systems, as e.g. subsurface reservoirs for geological CO$_2$ storage or geothermal energy, the simultaneous presence of multiple fluids introduces a range of physical phenomena which requires nonlinear and degenerate mathematical models. To capture particular multiphase effects as varying saturations, capillary pressure, drying shrinkage, pore pressure changes and finally related deformation and subsidence, the classical linearized Biot equations are not sufficient and require the introduction of a range of nonlinear constitutive relations. For this, mathematical models for multiphase poromechanics have been introduced combining separate components of multiphase flow and poroelasticity modeling~\cite{Coussy04}. The general model structure has been also studied from a thermodynamical point of view~\cite{seguin2019multi}. Furthermore, there exist many works on the numerical approximation of such systems, e.g.,~\cite{BBDM21,both2019anderson} and they are employed by practitioners~\cite{jha2014coupled}.

Despite an increased interest in the modelling and numerics communities, multiphase
poromechanics has been studied in much less detail and with less rigor than the much simpler linearized
Biot equations for single fluids and several extensions -- no well-posedness results exist for two-phase poromechanics models. In contrast, in the seminal work~\cite{auriault1977sanchez}, the first well-posedness results for the linear Biot equations have been established, deriving existence of strong solutions. A series of works has followed establishing weak solution~\cite{vzenivsek1984existence}, well-posedness from the perspectives of semi-group theory~\cite{showalter2000diffusion} as well as generalized gradient flow theory~\cite{both2019gradient}. The well-posedness for mixed-dimensional extensions to fractured media under contact have been established in~\cite{boon2023mixed} utilizing monotone operator theory. Additional physics have been considered and rigorously investigated, as linear poroviscoelastic effects~\cite{bociu2023mathematical} as well as the coupling with Stokes' equations in a classical fluid-structure-interaction fashion~\cite{ambartsumyan2019nonlinear,bociu2021multilayered}. Analytical results for nonlinear extensions include single-phase flow accounting for compressibility and positivity-preserving porosity~\cite{van2023mathematical}, thermal effects~\cite{van2019thermoporoelasticity,brun2019well}, as well as displacement-depending permeability laws~\cite{bociu2016analysis}. Finally, closely related to this study, it is worth stressing, that the existence of weak solutions for unsaturated poromechanics models has been established in~\cite{showalter2001partially,both2021global}, where unsaturated media unlike true multiphase systems are partially saturated by one active fluid governing the displacement of a second passive fluid. 

The model we consider here accounts for the motion of two incompressible phases in a porous medium, modeled by the Darcy-Muskat law and some capillary pressure law, see e.g. \cite{BB90}. 
In particular, the system degenerates as one fluid phase vanishes, leading to mathematical difficulties making suitable mathematical reformulations involving the Kirchhoff transform and Chavent global pressure necessary (see for instance~\cite{CJ86, Chen01}). 
The porous matrix is supposed to be elastic (small deformation is postulated) and interacts with the fluid by the mean of the equivalent pressure postulated by Coussy~\cite{Coussy04}. 
As we restrict to linear relative permeabilities, the Coussy equivalent pressure coincides with the Chavent global pressure. Another important difficulty comes from the fact that linear elasticity does not prevent the porosity to become nonpositive. To maintain the model in its regime of validity, we incorporated some Lagrange multiplier $\chi$, so that we do not have to assume the porosity to be positive as for instance in~\cite{BBDM21}. Furthermore, we allow for the permeability to depend on the porosity in the line of Kozeny~\cite{Kozeny27} and Carman~\cite{Carman37}. 
Finally, state-dependent gravitational forces are incorporated acting both on the bulk as well as on the fluid phases. To the best of the authors' knowledge, the presented result is the first-ever existence result for multiphase poromechanics in presence of physically relevant degenerate mobilities. Our results comes however with restrictions. Constant Biot coefficients and Biot modulus are assumed for simplicity, as well as specific (but physically meaningful) boundary conditions on the displacement. We also require some weak-coupling condition. Assumptions are detailed and discussed in Section~\ref{ssec:2.weak} later on. The extension of existence of weak solutions for tightly coupled and/or heterogeneous in space systems with discontinuous characteristics remains an open problem.

\subsection{The governing equations}\label{ssec:2.eq}
Let us first start by stating the equations governing the motions of two immiscible fluids --- a wetting one labeled with subscript $w$ and a 
non-wetting one labeled by $n$ --- in a porous medium represented by some bounded open set $\Omega \subset \R^d$. 
We remain sloppy here concerning regularity, which will be made precise later on. 
For $\a\in \{n,w\}$, denote by $\rho_\a$ and $\mu_\a$ the (constant) density and viscosity of the phase $\a$, and by $\g$ the gravity vector (pointing downwards). In addition, as in the single-phase Biot model~\cite{Biot41}, the porous matrix is assumed to be deformable; let $\bu$ denote the macroscopic displacement of the matrix wrt.\ $\Omega$.
\begin{subequations}\label{eq:1}
The classical multiphase Darcy law, with the hydrostatic phase pressure extended to the deformable case in a thermodynamically consistent way, then writes
\be\label{eq:1.cons}
\p_t \phi_\a  - \div\left( \frac{s_\a}{\mu_\a} \bbK(\phi) \grad \left( p_\a  - \rho_\a \g \cdot (\bx + \bu)  \right) \right) = 0
\ee
where, for volume densities $\bphi = (\phi_n, \phi_w)$ of the two phases, we denote by 
$\phi = \phi_n + \phi_w$ the porosity, and by $s_\a = \frac{\phi_\a}{\phi}$ the saturation (or volume fraction) of the phase 
$\alpha$. The intrinsic permeability $\bbK$ of the porous medium may depend on the porosity $\phi$, 
whereas we restrict our purpose to linear relative permeabilities for technical reasons that will appear
 in what follows. The pressure of the phase $\alpha \in \{n,w\}$ decomposes into three contributions:
\be\label{eq:1.p}
p_\a = \hat p_\a + \pi +\chi. 
\ee
In the above  right-hand  side, only the first term $\hat p_\alpha$ depends on $\alpha$. It is related to $\bphi$, and more 
specifically to the saturations, via the formulas
\be\label{eq:1.hp}
\hat p_n  = \gamma(s_n) + s_w \gamma'(s_n) \quad\text{and}\quad \hat p_w = \gamma(s_n) - s_n \gamma'(s_n)
\ee
where $\gamma: [0,1] \to \R_+$ is convex and increasing, and shall be interpreted as the antiderivative of the capillary pressure function.
The last term $\chi$, the introduction of which being somehow artificial, shall be thought as a Lagrange multiplier ensuring that the porosity will not leave the range of validity of the model $\phi \in [\phi_\flat, \phi^\sharp]$ for some constants $0 < \phi_\flat < \phi^\sharp< 1$. Therefore, $\chi$ is related to $\phi$ through the maximal monotone graph $\ov \chi = \partial \ind_{[\phi_\flat,\phi^\sharp]}$ by 
\be\label{eq:1.chi}
\chi \in \ov \chi(\phi) \quad \text{with}\; \ov \chi(\phi) = \begin{cases} 
0 & \text{if} \; \phi_\flat < \phi < \phi^\sharp, \\
(-\infty,0] & \text{if}\; \phi = \phi_\flat, \\
[0,+\infty) & \text{if}\; \phi = \phi^\sharp.
\end{cases}
\ee
\end{subequations}
Note in particular that since $s_n + s_w = 1$, one recovers the capillary pressure relation 
\[
p_n - p_w = \gamma'(s_n). 
\]
Moreover, we deduce from~\eqref{eq:1.hp} that $s_n  \hat p_n + s_w \hat p_w = \gamma(s_n)$, so that \eqref{eq:1.p} yields
\[
s_n p_n + s_w  p_w  - \gamma(s_n) = \pi  +  \chi. 
\]
In particular, on the set $\{\phi_\flat < \phi < \phi^\sharp \}$, where the Lagrange multiplier $\chi$ vanishes, $\pi$ coincides 
with the equivalent pressure introduced by Coussy~\cite{Coussy04}, and is therefore referred to as the Coussy pressure. 
It encodes the pressure felt by the porous matrix surrounding the fluid. 

We assume small displacements $\bm{u}$, describing the macroscopic deformation of the bulk, so that we can stick to linear elasticity. Moreover, we assume the system to be quasi-static, i.e. the mechanical response of the matrix is supposed to be instantaneous, so that mechanical forces remain at equilibrium.
Denoting by $\ov \bsig$ the Cauchy stress tensor, decomposing into effective stress $\bsig$ and pore pressure contribution $\pi$~\cite{Biot41,Coussy04}
\be
\ov \bsig = \bsig - b \pi \bbI,
\ee
and by $\gravmech$ a body force, the balance of linear momentum $-\div \ov \bsig = \gravmech$ writes
\begin{subequations}\label{eq:2}
\be\label{eq:2.forces}
\div \bsig =   b \, \grad \pi- \gravmech, 
\ee
where $b\in (0,1]$ is the so-called Biot coefficient.
The effective stress tensor $\bsig$ and the strain tensor $\beps(\bu) = (\grad \bu + \grad \bu^T)/2$ are related through the isotropic Hooke law
\be\label{eq:2.hooke}
 \bsig = 2 \mu\,  \beps(\bu) + \lambda \div \bu\, \bbI,
 \ee 
 \end{subequations}
where $\lambda, \mu>0$ are the Lamé coefficients. The body force $\gravmech$ may depend on the fluid distribution, e.g., the gravitational force typically incorporates an effective volume-averaged bulk density
\begin{subequations}\label{eq:2.grav}
\be\label{eq:2.gravitational_force}
\gravmech_\g(\bphi) = \left(\phi \sum_{\a \in\{n,w\}} s_\a \rho_\alpha + (1-\phi) \rho_s(\phi)\right) \g = \left( \sum_{\a \in\{n,w\}} \phi_\a \rho_\a + (1-\phi_r) \rho_{s,r}\right)\g,
\ee
where $\rho_s$ denotes the rock density set to be a function of the porosity, defined through
\be\label{eq:2.rock}
  \left(1-\phi\right) \rho_s(\phi)  = \left(1-\phi_r\right)\rho_{s,r}, 
\ee
and thus satisfying satisfying mass conservation $\frac{d}{dt} \int_E (1-\phi) \rho_s(\phi) = 0$ for any measurable subset $E$ of $\O$, cf.~\cite{Coussy04}. From now on, we consider for $\gravmech$
\be
\gravmech(\bphi) = \gravmech_\g(\bphi) + \gravmech_\mathrm{ext}
\ee
for some additional external, state-independent body force $\gravmech_\mathrm{ext}$.
\end{subequations} 

Beside the macroscopic deformation of the matrix encoded by its displacement, the porous structure is assumed to be compressible. 
This is encoded by the parameter $\theta$ representing the microscopic compression of the solid grains of the porous structure. 
It simply relates to the Coussy pressure by
\be\label{eq:3.theta}
M\theta = \pi 
\ee
with $M>0$ being referred to in the literature as the Biot modulus. 

The last equation set on the time-and-space bulk $\R_+\times\O$ is a constraint on the fact that the fluid and the solid have to share the available space, leading to
\be\label{eq:constraint}
\phi  - b \, \div \bu - \theta = \phi_r
\ee
for some spatially varying reference porosity with values $\phi_r(\bx) \in (\phi_\flat,\phi^\sharp)$, $\bx\in\Omega$,  representing the porosity at rest. 

To close the system, we prescribe some initial conditions 
\be\label{eq:init}
\bphi_{|_{t=0}} = ({\phi_n}_{|_{t=0}}, {\phi_w}_{|_{t=0}})= \bphi^0 = (\phi_n^0, \phi_w^0)
\ee
on the fluid contents with 
\[
\phi_\flat \leq \phi^0 = \phi_n^0 + \phi_w^0 \leq \phi^\sharp.
\]
Since the mechanical  equilibration is instantaneous, the initial displacement $\bu^0$ and microscopic deformations $\theta^0$ are derived from $\bphi^0$ as the minimizer of some mechanical energy under the constraint~\eqref{eq:constraint}, see (H\ref{H.init}) later on. 
Boundary conditions of mixed type are considered. More precisely, given a partition $\Gamma^N, \Gamma^D$ of $\p\O$, and denoting 
by $\n$  the normal to $\p\O$ outward w.r.t. $\O$, 
\begin{subequations}\label{eq:BC}
we prescribe 
\be\label{eq:BC.u}
\bu = \bm{0} \quad \text{on}\; \R_+\times \G^D \qquad\text{and}\qquad \bsig\cdot \n = b \, \pi \n \quad \text{on}\; \R_+\times \G^N
\ee
for the solid mechanics equations~\eqref{eq:2}, i.e., homogeneous boundary conditions for displacement and traction in terms of the effective stress, while the fluid part~\eqref{eq:1} is complemented by setting
\be\label{eq:BC.phi.N}
- \frac{s_\a}{\mu_\a} \bbK(\phi)\grad \left( p_\a  - \rho_\a \g \cdot(\bx + \bu) \right) \cdot \n = \0 \quad \text{on}\; \R_+\times \G^N,
\ee
and 
\be\label{eq:BC.phi.D}
p_n  = p_n^D  \quad \text{and}\quad p_w  = p_w^D \quad\text{on}\; \R_+\times \G^D.
\ee
\end{subequations}

\begin{rem}[Non-homogeneous Dirichlet data for the displacement]
 The case of non-homogeneous Dirichlet data $\bu^D$ for the displacement in~\eqref{eq:BC.u} can be reduced in a standard way to the case of homogeneous data, considered above. Indeed, in the momentum balance equation~\eqref{eq:2.forces}, using a lifting, its contribution can be incorporated in the constant, external body force $\gravmech_\mathrm{ext}$, whereas, in the porosity constraint~\eqref{eq:constraint}, the contribution is included in the definition of the reference porosity $\phi_r$, consistently with its character representing the medium at rest. 
\end{rem}

\subsection{A thermodynamic viewpoint}\label{ssec:2.thermo}
The equations presented in Section~\ref{ssec:2.eq} have strong connections with thermodynamics. 
In particular, the dynamics prescribed by~\eqref{eq:1}--\eqref{eq:BC} can be interpreted as some generalized gradient flow of the Helmholtz free energy augmented with some potential energy related to body forces and in particular to gravity.

More precisely, to a set $\bX = (\bphi, \beps, \theta)$ of primary unknowns (here $\beps = \beps(\bu)$), we associate the Helmholtz free energy defined as the sum of three contributions 
\be\label{eq:Helmholtz}
\Ff(\bX) = \Ff_f(\bphi) + \Ff_s(\beps, \theta) + \Ff_c(\bX). 
\ee
The energy $\Ff_f(\bphi)$ associated to the fluid is defined as 
\be\label{eq:Fff}
\Ff_f(\bphi) = \int_\O \left\{ \phi \gamma(s_n) + \ind_{[\phi_\flat,\phi^\sharp]}(\phi) \right\}, 
\ee
where $\ind_C(v) = 0$ if $v \in C$ and $+\infty$ otherwise. 

The mechanical energy associated to the porous matrix deformation is given by 
\be\label{eq:Ffs}
\Ff_s(\beps,\theta) = \int_\O \left\{\frac{M}2 \theta^2 + \mu \, \beps:\beps + \frac{\lambda}2 |\Tr\beps|^2 \right\}
\ee
with $\Tr\beps = \div \bu$.

The last term $\Ff_c(\bX)$ enforces the constraint~\eqref{eq:constraint} to hold (almost) everywhere in $\O$. 
We introduce the (linear thus) convex set $\Kk=\{\bX\, | \, \phi - b \Tr\beps - \theta = 1-\phi_r^\star\}$, then we set 
\be\label{eq:Ffc}
\Ff_c(\bX) = \int_\O \ind_\Kk(\bX)  = \sup_{w} \int_\O \left(\phi - \phi_r - b\,  \Tr\beps- \theta \right) w.
\ee

In addition, the potential energy of the bulk associated to mechanical loading, is given by
\be\label{eq:Ffg}
\Ff_g(\bphi,\bu) = -\int_\O \gravmech \cdot (\bx + \bu) = -\int_\O \left[\left( \sum_\a \phi_\a \rho_\a + (1-\phi_r)\rho_{s,r} \right) \g + \gravmech_\mathrm{ext} \right] \cdot \left(\bm{x} + \bu\right).
\ee
Note that $\Ff_g$ is neither convex nor concave, but smooth as the sum of linear and quadratic terms.

The Helmholtz free energy is a convex yet non-smooth function of $\bX$ because of the constraints $\phi_\flat \leq \phi \leq \phi^\sharp$ incorporated in the definition of $\Ff_s$ and the constraint~\eqref{eq:constraint} corresponding to the term $\Ff_c$. As a consequence, its subdifferential is not single valued. In particular, $\bY = (\bp, \ov \bsig, z)$ belongs to $\p\Ff(\bX)$ if $\bp=(p_n,p_w)$ is given by \eqref{eq:1.p}\eqref{eq:1.hp}\&\eqref{eq:1.chi}, if $\ov \bsig = 2 \mu \, \beps + \lambda \Tr\beps \, \bI- b\,  \pi \bI = \bsig - b \, \pi \bI$ and if $z = M\theta - \pi$. In the previous expressions, the Coussy pressure $\pi$ can be interpreted as the Lagrange multiplier for the constraint $\bX \in \Kk$. 
For the sum of both the Helmholtz and the gravitational energy, the hydrostatic phase pressures $p_\a - \rho_\a\g \cdot (\bx + \bu)$, acting as fluid potential in~\eqref{eq:1.cons}, is derived as as an element of the subdifferential $\partial_{\phi_\a}(\Ff +\Ff_g)$.

Assume now that $t\mapsto \bX(t)$ satisfies the systems~\eqref{eq:1}--\eqref{eq:BC}, then 
\begin{subequations}
 \label{eq:dFf.1}
\be\label{eq:dFf.1a}
\frac{\d}{\d t} \Ff(\bX) = \int_\O \bigg\{
\sum_{\a \in \{n,w\}} p_\a \, \p_t \phi_\a + \ov \bsig : \p_t \beps + z \p_t \theta \bigg\},
\ee
and
\be\label{eq:dFf.1b}
\frac{\d}{\d t}\Ff_g(\bphi, \bu) = -\int_\O \gravmech \cdot \p_t \bu - \int_\O \sum_{\a \in \{n, w\}} \left(\rho_\a \g \cdot (\bx + \bu) \right) \p_t \phi_\a.
\ee
\end{subequations}
First, $z= 0$ as a direct consequence of~\eqref{eq:3.theta}, while $-\div \ov \bsig = \gravmech$ owing to~\eqref{eq:2}. Therefore, Stokes' theorem provides 
\[
\int_\O  \ov \bsig : \p_t \beps -\int_\O \gravmech \cdot \p_t \bu = - \int_{\p\O} \p_t\bu \cdot \ov \bsig \n \overset{\eqref{eq:BC.u}}{=} 0.
\]
Besides, it follows from~\eqref{eq:1.cons} together with Stokes' theorem and~\eqref{eq:BC} that 
\[
\int_\O \sum_{\a \in \{n,w\}} \left(p_\a - \rho_\a \g \cdot(\bx + \bu) \right) \, \p_t \phi_\a = -  \int_\O \sum_{\a \in \{n,w\}} \frac{s_\a}{\mu_\a} \left|\bbK(\phi)^{1/2} \grad \left( p_\a - \rho_a \g \cdot(\bx + \bu) \right) \right|^2 + \Sigma^D,
\]
with 
\[
\Sigma^D =  \int_{\Gamma^D} 
\sum_{\a \in \{n,w\}} \left(p_\a^D - \rho_\a \g \cdot \bx\right) \frac{s_\a}{\mu_\a} \bbK(\phi) \grad \left(  p_\a - \rho_\a \g \cdot (\bx + \bu) \right) \cdot \n
\]
being the work of the force imposed on the fluid at the level of the Dirichlet boundary condition~\eqref{eq:BC.phi.D} and hydrostatic phase pressures. 
Therefore, \eqref{eq:dFf.1} yields 
\be\label{eq:dFfdt}
\frac{\d}{\d t} \left(\Ff(\bX) +  \Ff_g(\bphi, \bu) \right) = -  \int_\O \sum_{\a \in \{n,w\}} \frac{s_\a}{\mu_\a} \left|\bbK(\phi)^{1/2} \grad \left( p_\a - \rho_a \g \cdot(\bx + \bu) \right) \right|^2 + \Sigma^D, 
\ee
The first contribution in the right-hand side of~\eqref{eq:dFfdt} is nonpositive since $s_\a\geq 0$ and 
$\bbK(\phi)$ is symmetric definite positive. It encodes the entropy production of the system, in terms of the hydrostatic phase pressures. Note that the evolution of the solid is assumed to be reversible, in the sense that 
no entropy is produced by the mechanical deformation of the porous matrix.

\subsection{Weak formulation and main result}\label{ssec:2.weak}
The analysis we propose in the next section for system~\eqref{eq:1}--\eqref{eq:BC} strongly builds on the stability estimate~\eqref{eq:dFfdt}, and therefore on some mathematical counterparts of the second principle of thermodynamics. More precisely, system \eqref{eq:1}--\eqref{eq:BC} can be interpreted as a non-autonomous generalized gradient flow (see for instance~\cite{Mie11, Pel-lecture}). 
The particular structure of the model under consideration yields several difficulties. A first one comes from the fact that the equations governing the fluid flow and the solid deformation are coupled through the strong constraint~\eqref{eq:constraint}. 
A second difficulty comes from the degeneracy of the dissipation term (mainly in the fluid phase pressure contribution)
\be\label{eq:Dd}
\Dd =  \int_\O \sum_{\a \in \{n,w\}} \frac{s_\a}{\mu_\a} \bbK(\phi) \grad p_\a \cdot \grad p_\a \geq 0. 
\ee
Besides the lack of dissipation for the solid part coming from the reversibility pointed out above, another degeneracy comes from the fact that 
the prefactor $s_\a$ in $\Dd$ vanishes in regions where only one fluid phase is present. As a consequence, only a weak control on the phase pressures can be deduced from the control of $\Dd$, as usual in the two-phase setting. It motivates the introduction of the Kirchhoff transform to carry out the mathematical analysis. 

Let $\xi, \psi: [0,1]\to\R$ be the continuous and increasing functions respectively defined by 
\be\label{eq:Kirchhoff.def}
\xi(s) = \int_0^s \sqrt{z(1-z)} \gamma''(z) \d z \quad \text{and} \quad \psi(s) = \int_0^s {z(1-z)} \gamma''(z) \d z, \qquad s \in [0,1], 
\ee
then one readily deduces from~\eqref{eq:1.hp} that 
\be\label{eq:Kirchhoff.1}
s_n \grad \hat p_n = \grad \psi(s_n) = \sqrt{s_ns_w} \grad \xi(s_n), \qquad s_w \grad \hat p_w = - \grad \psi(s_n), 
\ee
and that 
\be\label{eq:Kirchhoff.2}
s_n |\grad \hat p_n|^2 + s_w |\grad \hat p_w|^2 = |\grad \xi(s_n)|^2. 
\ee
Therefore, provided $\bbK(\phi) \geq K_\flat \bbI$ in the sense of symmetric matrices for some $K_\flat>0$, we have 
\[
\Dd \geq \frac{K_\flat}{\mu^\sharp} \int_\O \sum_{\a\in\{n,w\}} s_\a |\grad p_\a|^2 = \frac{K_\flat}{\mu^\sharp}
 \int_\O \left\{ |\grad \xi(s_n)|^2 + \left|\grad(\pi +  \chi)\right|^2 \right\}
\]
where we have set $\mu^\sharp = \max\{\mu_n,\mu_w\}$.

To properly define the notion of weak solution, we still have to introduce some relevant functional spaces. 
In what follows, we denote by $H^1(\O)$ the usual Sobolev space equipped with the norm 
\[
\| v \|_{H^1(\O)}^2 = \| v \|_{L^2(\O)}^2 + \| \grad v \|_{L^2(\O)^d}. 
\]
We also denote by
\[
V = \{ v \in H^1(\O) \; \text{s.t.}  \; v_{|_{\G^D}}= 0 \}, 
\]
equipped with $\|v\|_V = \| \grad v \|_{L^2(\O)^d}$ 
which defines a norm thanks to the Poincaré inequality, 
and by $V'$ its topological dual. The $d$-dimensional product space of $V$ is denoted by $V^d$. We also define the closed subspace $\bU$ of $V^d$ as
\[
\bU = \{\bu \in V^d \; \text{s.t.}\; \div \bu \in H^1(\O)\}, 
\]
equipped with the norm 
\[
{\|\bu\|}_{\bU}^2 = \| \bu\|^2_{V^d} + \|\grad (\div \bu)\|_{L^2(\O)^d}^2.
\]

The boundary condition $\bp^D=(p_n^D, p_w^D)$ is assumed to not depend on time, and to be the restriction of some function (still denoted by) $\bp^D \in H^1(\O)^2$.  Then we can reconstruct $s_n^D$ by setting 
\be\label{eq:snD}
s_n^D = \Ss(p_n^D - p_w^D), 
\ee
where, similarly to what was proposed in~\cite{BLS09, CGP09, BJS09}, the inverse capillary function ${(\gamma')}^{-1}$ is extended into a continuous function on the whole $\R$ by 
\be\label{eq:Ss}
\Ss(p) = \begin{cases}
0 & \text{if}\; p \leq \gamma'(0) \\
{(\gamma')}^{-1}(p) & \text{if}\; \gamma'(0) \leq p \leq \gamma'(1) \\
1 & \text{if}\; p \geq \gamma'(1). \\
\end{cases}
\ee
As $\xi\circ\Ss$ is $\frac12$-Lipschitz continuous, then $\xi(s_n^D)$ also belongs to $H^1(\O)$. We end up with the following definition of a weak solution. 
\begin{Def}\label{def:weak}
A set of functions $\left( \bphi, \bu, \theta, \chi, \pi\right)$ with 
\begin{itemize}
\item $\phi_\a \in L^\infty(\R_+\times \O; \R_+)$ for $\a\in \{n,w\}$ satisfying $\phi_\flat \leq \phi \leq \phi^\sharp$ a.e. in $\R_+\times \O$, such that $\xi(s_n)  - \xi(s_n^D)\in L^2_\text{loc}(\R_+; V)$, 
\item $\bu \in L^2_\text{loc}(\R_+; \bU) \cap L^\infty_\text{loc}(\R_+; V^d)$, 
\item $\theta, \pi, \chi  \in L^2_\text{loc}(\R_+; H^1(\O))$ with $\chi \in \ov \chi(\phi)$ a.e. in $\R_+ \times \O$,
\end{itemize}
is said to be a {\em global in time weak solution} to the problem~\eqref{eq:1}--\eqref{eq:BC} if \eqref{eq:3.theta} and \eqref{eq:constraint}
hold almost everywhere in $\R_+\times\O$, and if 
\be\label{eq:weak.u}
\iint_{(0,T) \times \O} \Big(2 \mu \, \beps(\bu): \beps(\bv) + \lambda (\div \bu)(\div\bv) \Big) =  \iint_{(0,T) \times \O}b\, \pi\, \div \bv + \iint_{(0,T) \times \O} \gravmech(\bphi) \cdot \bv
\ee
for all $\bv \in L^2((0,T);V^d)$, and if 
\be\label{eq:weak.n}
\int_{\R_+} \int_\O \phi_n \p_t v +\int_\O \phi_n^0 v(0,\cdot)
+ \int_{\R_+} \int_\O 
\frac1{\mu_n} \bbK(\phi) \left( \grad \psi(s_n) + s_n \grad \left( (\pi + \chi) - \rho_n \g \cdot (\bx + \bu)\right)\right) \cdot \grad v 
= 0,
\ee
and
\be\label{eq:weak.w}
\int_{\R_+} \int_\O \phi_w \p_t v +\int_\O \phi_w^0 v(0,\cdot)
+ \int_{\R_+} \int_\O 
\frac1{\mu_n} \bbK(\phi) \left( - \grad \psi(s_n) + s_w \grad \left((\pi + \chi) - \rho_w \g \cdot (\bx + \bu)\right)\right) \cdot \grad v 
= 0,
\ee
for all $v \in C^1(\R_+; V)$, such that there exists some $T>0$ such that $v(t,\cdot) = 0$ for all $t\geq T$.
\end{Def}

Our main result is the existence of such a global in time weak solution under the assumptions we list below. 
\begin{enumerate}[{(H}1)]
%
\item\label{H.Cte} The viscosities $\mu_n, \mu_w$ and the densities $\rho_n, \rho_w$ are positive constants, whereas $\g \in \R^3$ is constant. 
The Lamé coefficients $\lambda, \mu$ are positive constants, as well as the Biot modulus $M$. The Biot coefficient $b$ is also constant and belongs to $(0,1]$. The porosity at rest $\phi_r$ is assumed to have regularity $\phi_r\in H^1(\Omega)$ with values in $[\phi_\flat, \phi^\sharp]$ for some constants $0<\phi_\flat < \phi^\sharp < 1$.

\item\label{H.gamma} The function $\gamma \in C([0,1]; \R_+) \cap C^1([0,1))$ is strictly convex and increasing, with $s \mapsto \sqrt{1-s}\,\gamma''(s)$ 
belonging to $L^1(0,1)$. It is extended into a lower-semicontinuous convex function $\gamma: \R \to [0,+\infty]$ by setting $\gamma(s) = +\infty$ is $s\notin [0,1]$. 
\item\label{H.K} The intrinsic permeability function $\bbK$ belongs to $C([\phi_\flat, \phi^\sharp]; S_d^{++}(\R))$. In particular, 
there exist $K^\sharp \geq K_\flat>0$ such that $K_\flat \bbI \leq \bbK(\phi) \leq K^\sharp \bbI$ for all $\phi \in [\phi_\flat, \phi^\sharp]$ in the sense of 
symmetric definite matrices. 

\item\label{H.init} The initial fluid content $\bphi^0 = \left(\phi_n^0, \phi_w^0\right)$ belongs to $L^\infty(\O; \R_+^2)$ with 
$\phi_\flat \leq \phi_n^0 + \phi_w^0 = \phi^0 \leq \phi^\sharp$ almost everywhere in $\O$. Besides, the initial displacement $\bu^0 \in H^1(\O)^d$, with $\bu^0 \in V^d$, and the microscopic compression $\theta^0 \in L^2(\O)$ are the unique solution to the elliptic problem
\[
(\bu^0, \theta^0) = \underset{{(\bu,\theta)\; \text{s.t.}\; \eqref{eq:constraint}}}{\operatorname{argmin}} \int_\O \left\{\frac{M}2 \theta^2 + \mu \, \beps(\bu):\beps(\bu) + \frac{\lambda}2 |\div \bu |^2 -\gravmech(\bphi^0) \cdot \bu\right\}.
\]
It is characterized by
\[
\phi^0 - b \, \div \bu^0 - \theta^0 = \phi_r, 
\]
as well as 
\[
\int_\O \left\{ 2 \mu \, \beps(\bu^0) : \beps(\bv) + \lambda (\div \bu^0)(\div \bv) \right\} = \int_\O b \pi^0 \div \bv + \int_\O \gravmech (\bphi^0)\cdot \bv, \qquad \forall \bv \in V^d,
\]
with $\pi^0 = M \theta^0$. 

\item\label{H.Dirichlet} The boundary data $\bp^D = (p_n^D,p_w^D) \in W^{1,\infty}(\O)^2$ do not depend on time.

\item \label{H.Omega} 
The domain $\O$ is a bounded and connected open subset of $\R^d$. Its boundary is Lipschitz continuous and splits into $\p\O = \G^D \cup \G^N$ with $\G^D \cap \G^N = \emptyset$. We assume that $\G^D$ has positive $(d-1)$-dimensional Hausdorff (or Lebesgue) measure, yielding some Poincaré inequality, as well as Korn's inequality. Moreover, we assume that there exists $\ctel{cte:mecha}$
depending only on $\O$ such that the unique solutions $\bv_1,\ \bv_2 \in V^d$ to 
\begin{subequations}
\label{eq:Brenner} 
\begin{alignat}{4}
\label{eq:Brenner.0-l2} 
\div \left(\beps(\bv_1) + \tilde \lambda \, \div \bv_1\,  \bbI \right) &=  \bw_1 &\quad& \text{in}\; \O, 
\qquad 
\left(\beps(\bv_1) + \tilde \lambda \, \div \bv_1 \, \bbI\right) \cdot \n &&= \0  &\quad&  \text{on}\; \G^N,\\
\label{eq:Brenner.0-h1}
\div \left(\beps(\bv_2) + \tilde \lambda \, \div \bv_2\,  \bbI \right) &= \grad w_2 &\quad& \text{in}\; \O, 
\qquad 
\left(\beps(\bv_2) + \tilde \lambda \, \div \bv_2 \, \bbI\right) \cdot \n &&= w_2 \, \n &\quad&  \text{on}\; \G^N,
\end{alignat}
with $\tilde \lambda >0$ and $\bw_1 \in L^2(\O)^d$ and $w_2 \in H^1(\O)$ satisfy $\bv_1,\ \bv_2 \in \bU$ and 
\begin{align}
\label{eq:Brenner.1-l2}
\tilde \lambda \| \grad (\div \bv_1) \|_{L^2(\O)^{d}} &\leq \cter{cte:mecha} \|\bw_1\|_{L^2(\O)},\\
\label{eq:Brenner.1-h1}
\tilde \lambda \| \grad (\div \bv_2) \|_{L^2(\O)^{d}} &\leq \cter{cte:mecha} \|w_2\|_{H^1(\O)}.
\end{align}
\end{subequations}
\item\label{H.weak-coupling} We assume that the coefficients of the problem satisfy the following {\em weak coupling} condition:
\[
\lambda > M \, b^2 \, \cter{cte:mecha}.
\]
where $\cter{cte:mecha}$ is the constant appearing in Assumption~(H\ref{H.Omega}).

\item \label{H.gravity} The body force $\gravmech$ is of the form prescribed by~\eqref{eq:2.grav}. We assume moreover that the external body force satisfies $\gravmech_\mathrm{ext}\in L^2(\Omega)$ and the reference density of the rock $\rho_{s,r} \in L^\infty(\Omega)$ does not depend on time.
\end{enumerate}

The above assumptions deserve some comments. First (H\ref{H.Cte}) requires the domain to be homogeneous in space. The extension to the case of heterogeneous porous media would of course be of great interest.

Rather that prescribing the capillary pressure function, we prescribe its antiderivative $\gamma$ in (H\ref{H.gamma}), the interpretation of which in terms of energy being a cornerstone of~\cite{CGM15,CGM17}, see also Section~\ref{ssec:2.thermo}. The setting we study does not allow the capillary energy density function $\gamma$ to depend on the porosity $\phi$, as suggested in the seminal work of Leverett~\cite{Leverett41}. This choice has been made to stick to the framework of Coussy~\cite{Coussy04}. Extending our result to the case where $\gamma$ also depends on $\phi$ wouldn't lead to major difficulties provided $\Ff_f$ in~\eqref{eq:Helmholtz} remains convex. However, our framework already encompasses classical models from the literature, as for instance the Brooks-Corey model. In the later, $\gamma(s) \sim (1-s)^{1-\frac1{\lambda_\text{BC}}}$ satisfies (H\ref{H.gamma}) for $\lambda_\text{BC} >2$, corresponding to a so-called narrow pore size distribution. Note that Assumption (H\ref{H.gamma}) can be easily relaxed by assuming merely that 
$s \mapsto (1-s)\,\gamma''(s)$ belongs to $L^1(0,1)$, at the price of a slightly weaker regularity requirement in Definition~\ref{def:weak}, that is $\psi(s_n) - \psi(s_n^D) \in L^2_\text{loc}(\R;V)$ instead of $\xi(s_n) - \xi(s_n^D) \in L^2_\text{loc}(\R;V)$. The proof, which can be readily completed by passing to the limit in yet another step of regularization, is left to the reader. Under such a relaxed assumption, the full range $\lambda_\text{BC}>1$ of Brook-Corey exponents can be recovered.

Assumption (H\ref{H.K}) gives a generic framework for the dependance of the permeability with respect to the porosity. This framework encompasses the classical models by Kozeny~\cite{Kozeny27} and Carman~\cite{Carman37}, but also more recent models~\cite{Costa06, SRZRK19}.

As the mechanical response of the porous matrix is instantaneous, it is natural to require the initial data $\bu^0$ and $\theta^0$ to be at equilibrium with the fluid distribution, the later being of finite energy. This is the purpose of Assumption~(H\ref{H.init}).

Assumption (H\ref{H.Omega}) looks reasonable as it extends to the case of more general boundary conditions a results which is known to hold true for convex, polygonal two-dimensional domains $\O$, cf. Brenner and Sung~\cite[Section 2]{BS92} in the pure traction or pure displacement regimes. Here, it is here merely extended to the case of more general boundary conditions of mixed type. Note however that even in the simpler case of the Laplace equation, this may lead to geometrical constraints on the splitting of $\p\O$ into $\G^D$ and $\G^N$, see for instance~\cite[Section 6.2]{Grisvard89}.
Note also that the full $H^1(\O)$ norm appears in the right-hand side of~\eqref{eq:Brenner.1-h1} since a constant {$w_2$} in~\eqref{eq:Brenner.0-h1} possibly yields a non-constant solution {$\bv_2$} due to the boundary condition on $\G^N$. 

Assumption (H\ref{H.Dirichlet}) is there for the sake of simplicity. In the case of time varying boundary conditions on the pressures, suitable regularity assumptions are needed, as for instance in \cite{CNV21}.

Assumption~(H\ref{H.weak-coupling}) is known in the literature as a weak coupling condition. This regime is realistic in many applications with low Skempton coefficient~\cite{Coussy04,ulm2004concrete}, and a similar condition appears in papers on numerical methods (see for instance~\cite{altmann2022decoupling}) in which naive coupling strategies are employed, in opposition to the celebrated fixed stress and undrained split~\cite{kim2011stability,both2017robust} approaches which allow to overpass the weak coupling regime for the simulation of (possibly single phase) poromechanics.

Finally, Assumption~(H\ref{H.gravity}) comprises typical practical scenarios. External body forces $\gravmech_\mathrm{ext}$ with weaker regularity, e.g., associated to non-homogeneous traction boundary conditions, may be of interest for practical applications and thus also the analysis. For simpler presentation these are, however, not further discussed here.

\medskip
The following theorem is the main result of our paper. 
\begin{thm}\label{thm:main}
Under Assumptions (H\ref{H.init})--(H\ref{H.weak-coupling}), there exists (at least) a global in time weak solution to the problem~\eqref{eq:1}--\eqref{eq:BC} in the sense of Definition~\ref{def:weak}.
\end{thm}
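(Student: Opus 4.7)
The strategy is to construct weak solutions by a multi-layered approximation---regularization, Faedo--Galerkin projection, and compactness---driven throughout by the generalized gradient flow structure embodied in~\eqref{eq:dFfdt}.

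I would first introduce a parameter $\eta>0$ that performs two regularizations: a uniform non-degeneracy shift of the mobilities in~\eqref{eq:1.cons} (e.g.\ replacing $s_\a$ by $s_\a+\eta$, with the Kirchhoff identities~\eqref{eq:Kirchhoff.1}--\eqref{eq:Kirchhoff.2} adjusted accordingly), and a Moreau--Yosida regularization of the multivalued graph $\ov\chi$, turning the inclusion $\chi\in\ov\chi(\phi)$ into a Lipschitz monotone equality $\chi=\ov\chi_\eta(\phi)$. Choosing orthonormal bases of $V$ and of $V^d$ (e.g.\ eigenfunctions of the Laplace and Lamé operators adapted to the boundary conditions), I would project the fluid equations~\eqref{eq:weak.n}--\eqref{eq:weak.w} onto the Galerkin space. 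Since the mechanical block is quasi-static, linear and coercive in $(\bu,\theta,\pi)$ once $\bphi$ is given (using~\eqref{eq:constraint} and~\eqref{eq:3.theta}), the mechanical variables are recovered algebraically from the current approximate fluid content. The regularized Galerkin problem thus reduces to an ODE for the fluid coefficients, and local-in-time existence comes from Cauchy--Lipschitz.

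The a priori estimates follow from a rigorous version of~\eqref{eq:dFfdt} available at this approximation level. It provides $L^\infty_{\mathrm{loc}}(\R_+)$ control of the augmented energy $\Ff+\Ff_g$ and an $L^2_{\mathrm{loc}}(\R_+)$ bound on the dissipation $\Dd$. Korn's inequality and the bound on $\Ff_s$ give $\bu\in L^\infty_{\mathrm{loc}}(\R_+;V^d)$, the Kirchhoff identity~\eqref{eq:Kirchhoff.2} together with (H\ref{H.K}) yields $\xi(s_n)$ and $\pi+\chi$ in $L^2_{\mathrm{loc}}(\R_+;V)$, and (H\ref{H.Omega}) applied to the momentum balance~\eqref{eq:2} with $w_2\sim b\pi$ in~\eqref{eq:Brenner.0-h1} upgrades this to $\div\bu\in L^2_{\mathrm{loc}}(\R_+;H^1(\O))$. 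This is the point where the weak-coupling hypothesis (H\ref{H.weak-coupling}) comes in decisively: after eliminating $\theta$ via~\eqref{eq:constraint} and closing the loop between the $\pi$-bound on $\div\bu$ and the $\div\bu$-contribution to $\Ff_s$, the estimates close only if the residual quadratic form is coercive, which is exactly the condition $\lambda>Mb^2\cter{cte:mecha}$.

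The final stage is to pass to the limit, first as the Galerkin dimension $N\to\infty$, then as $\eta\to 0$. The linear mechanical equation~\eqref{eq:weak.u} passes readily under weak convergence of $\pi$ and $\bphi$. The main obstacle, as I anticipate it, is obtaining strong compactness of the saturations needed to identify the nonlinear fluxes $\grad\psi(s_n)$ and $s_\a\grad(\pi+\chi-\rho_\a\g\cdot(\bx+\bu))$ in~\eqref{eq:weak.n}--\eqref{eq:weak.w}. My plan is to combine the $V$-bound on $\xi(s_n)$ with a negative-norm time-derivative estimate on $\phi_n$ read from~\eqref{eq:weak.n} (whose mechanical contribution is uniformly tame thanks to the $H^1$-bound on $\div\bu$), apply Aubin--Lions--Simon to $\xi(s_n)$, and upgrade to almost-everywhere convergence of $s_n$ via the strict monotonicity of $\xi$. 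Strong convergence of $\phi$ then follows from~\eqref{eq:constraint} and the compactness of $\div\bu$ and $\theta$ already available. The inclusion $\chi\in\ov\chi(\phi)$ is recovered by a standard maximal-monotone argument using strong $L^2$ convergence of $\phi_\eta$, weak $L^2$ convergence of $\chi_\eta$, and the Mosco-convergence $\ov\chi_\eta\to\ov\chi$. Initial conditions are recovered from time continuity at the approximate level, completing the construction of a weak solution in the sense of Definition~\ref{def:weak}.
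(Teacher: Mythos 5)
Your proposal shares the paper's gradient-flow skeleton --- regularize, project, estimate via the Helmholtz energy, close the estimates with (H\ref{H.weak-coupling}), then pass to the limit by compactness and a Minty-type argument --- but it departs from the paper at two architectural points, and one of them opens a genuine gap.

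The most serious issue is the choice of a Moreau--Yosida regularization $\ov\chi_\eta$ of the constraint graph. Moreau--Yosida turns $\ov\chi=\p\ind_{[\phi_\flat,\phi^\sharp]}$ into the Lipschitz \emph{penalty} $\tfrac1\eta(\phi-\Pi_{[\phi_\flat,\phi^\sharp]}(\phi))$, which is finite on all of $\R$ and hence does \emph{not} confine $\phi$ to the physical range at the approximate level. Once $\phi$ may leave $[\phi_\flat,\phi^\sharp]$, the saturations $s_\a=\phi_\a/\phi$ need not stay in $[0,1]$ (and $\phi$ could approach $0$), $\bbK(\phi)$ is only defined on $[\phi_\flat,\phi^\sharp]$ under (H\ref{H.K}), the additive mobility shift $s_\a+\eta$ can become non-elliptic, and the Kirchhoff identities~\eqref{eq:Kirchhoff.1}--\eqref{eq:Kirchhoff.2} that drive the whole energy estimate no longer apply. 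The paper instead uses a \emph{barrier} $G_\eps:(\phi_\flat,\phi^\sharp)\to\R$, surjective, strictly increasing with $G_\eps'\geq\eps$; this makes the map $\bPhi_\eps$ of Lemma~\ref{lem:Phi} Lipschitz with values automatically in $\Kk_\bphi$, so that $\phi\in(\phi_\flat,\phi^\sharp)$ holds pointwise by construction throughout the approximation, and the saturations stay in $[0,1]$. A penalty does not give you this, and without it the later steps of your argument do not start.

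A second under-specified step is how the $H^1$ bound on $\pi$ \emph{alone} is extracted. The dissipation only controls $\grad(\pi+\chi)$, and it is precisely here, and not merely in a generic ``residual quadratic form,'' that (H\ref{H.weak-coupling}) is spent. The paper's mechanism (Propositions~\ref{prop:Pregdis.NRG} and~\ref{prop:noreg.NRG}) is: use the monotonicity of the barrier to extract the cross term $\grad G_\eps(\phi)\cdot\grad\pi$ from $|\grad(G_\eps(\phi)+\pi)|^2$; rewrite $\grad\pi=M\grad\theta=M\grad(\phi-b\div\bu-\phi_r)$ using the constraint; at the Galerkin level this requires a summation-by-parts through the Neumann--Laplace eigenvalue relation~\eqref{eq:wk}, which is the reason the space $\Ww_k$ for $\pi,\theta$ is made of Neumann eigenfunctions and the displacement space $\bUu_k$ is made of their elastic liftings~\eqref{eq:ovuk}; then use (H\ref{H.Omega}) to control $\grad(\div\bu)$ by $\pi$ in $H^1$ and invoke $\lambda>Mb^2\cter{cte:mecha}$ to make the resulting quadratic form coercive. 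Saying ``eigenfunctions of the Laplace and Lam\'e operators'' is the right instinct, but the proposal does not flag the specific coordination between $\Ww_k$ and $\bUu_k$, nor the summation-by-parts identity, and without them the estimate does not close at the discrete level.

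Finally, on the time-continuous ODE route: the paper deliberately time-discretizes and proves well-posedness of one backward Euler step by monotone-operator theory followed by a Schauder fixed point (Propositions~\ref{prop:Pregdis} and~\ref{prop:Schauder}), precisely to avoid differentiating the constitutive map in time. If you run a Cauchy--Lipschitz ODE for the Galerkin coefficients, you need either the pointwise constraint $\bphi\in\Kk_\bphi$ to be preserved by the Galerkin truncation (it is not), or a $C^1$ constitutive map so that $\p_t\bphi=D\bPhi_\eps\cdot\p_t(\bp-\pi)$ makes sense (the paper only proves Lipschitz continuity, and $\Ss_\eps$ is genuinely non-$C^1$). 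You would also need to regularize $\gamma$ itself, as the paper does via $\gamma_\eps''=\min(\eps^{-1},\max(\eps,\gamma''))$, since under (H\ref{H.gamma}) $\gamma'$ may blow up at $s=1$ and the ODE right-hand side would fail to be locally Lipschitz near pure saturation. These obstacles are repairable by additional smoothing, but as written the local-existence step is not justified; the time-discrete route sidesteps all of them.
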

Our proof for this theorem, to be detailed in what follows, relies on compactness arguments. We apply two successive regularizations of the problem. 
First, we soften the constraint~\eqref{eq:1.chi} into $\chi = G_\eps(\phi)$, $\eps>0$ where $G_\eps$ is a suitable regularization of the maximal monotone graph $\ov\chi$. A prototypical choice for $G_\eps$ is $\phi\mapsto\eps \log\frac{\phi - \phi_\flat}{\phi^\sharp- \phi}$ for $\eps>0$. The hard constraint~\eqref{eq:1.chi} is recovered when $\eps$ tends to $0$. We also regularize the mobilities in~\eqref{eq:1.cons} to remove the degeneracy in pure phase zone $\{s_\a = 0\}$: we replace $s_\a$ by $k_\eps(s_\a) = \max(\eps, s_\a)$ in~\eqref{eq:1.cons}. This modification 
allows to derive some control on the phase pressure thanks to the control of the entropy production~\eqref{eq:Dd}. 
In order to initiate the process, we establish the well-posedness of the elliptic problem consisting in one step of the backward Euler scheme with time step ${h}>0$ for the regularized problem with $\eps>0$. Yet another regularization is required to justify properly our calculations: we make use of a Faedo-Galerkin (spectral) method to 
rigorously establish the regularity of the solutions to the discrete problem. 
Then we recover a global in time weak solution by passing first to the limit $\eps\to0$, then $h\to0$. 

The proof strongly builds on the second principle of thermodynamics, in the sense that the main estimate used in our existence proof is the control of the production of the 
Helmholtz free energy sketched out in Section~\ref{ssec:2.thermo}, opening the way to possible extensions to more complex (but still thermodynamically consistent) physical settings.

\section{The discrete and regularized system}\label{sec:TimeDis} 

\subsection{Regularization}\label{ssec:reg}

Let $\left(G_\eps\right)_{\eps>0} \subset L^1(\phi_\flat,\phi^\sharp)$ be a family of smooth increasing and onto functions from $(\phi_\flat,\phi^\sharp)$ to $\R$ vanishing at $\frac{\phi_\flat + \phi^\sharp}2$ with
\be\label{eq:Gbeta.strict}
G_\eps'(y) \geq \eps, \qquad y \in \R.
\ee
and 
\be\label{eq:Gbeta.L1}
G_\eps \underset{\eps\to 0}\longrightarrow 0 \quad \text{in}\; L^1(\phi_\flat,\phi^\sharp).
\ee
Then one infers deduces from Dini's theorem that $G_\eps$ tends to $0$ uniformly on any compact of $(\phi_\flat,\phi^\sharp)$, and in particular that 
\be\label{eq:Gbeta.pointwise}
G_\eps(y) \underset{\eps\to 0}\longrightarrow 0 \quad \text{for all}\; y \in (\phi_\flat,\phi^\sharp).
\ee

We also set
\be\label{eq:kdelta} k_\eps(s) = \max(\eps,s) \quad  \text{for all}\; s\in\R.\ee
The regularization of the mobility $k_\eps$ makes the problem coercive but yields a difficulty that was originally hidden by the degeneracy in pure phase regions. 
As suggested by the extension~\eqref{eq:Ss} of $(\gamma')^{-1}$ outside of $[\gamma'(0),\gamma'(1)]$, 
and in close connection to what was proposed in~\cite{CP12, BCH13}, it becomes necessary to extend the capillary pressure function $\gamma':[0,1] \to \R_+$ into a maximum monotone graph. This amounts to define the monotone and anti-monotone  graphs $\ov p_n$, $\ov p_w$ by 
\be\label{eq:ovpa}
\ov p_n(s) = \begin{cases}
(-\infty, \gamma(0)+\gamma'(0)] & \text{if}\; s = 0, \\
\gamma(s)+(1-s)\gamma'(s) & \text{if}\; s \in (0,1], 
\end{cases}
\qquad 
\ov p_w(s) = \begin{cases}
\gamma(s)-s\gamma'(s) & \text{if}\; s \in [0,1), \\
(- \infty, \gamma(1)-\gamma'(1)] & \text{if}\; s =1. 
\end{cases}
\ee
Then $\ov p_n - \ov p_w = \Ss^{-1}$ in the sense of the maximal monotone graphs. 
We also regularize and then extend to the whole $\R$ the function $\gamma$ by defining 
\[
\gamma_\eps(s) = \begin{cases}
\gamma(0) + \int_0^s \gamma_\eps'(z) \d z & \text{if}\;s \in [0,1], \\
+\infty & \text{otherwise},
\end{cases}
\]
where 
\[
\gamma_\eps'(s) = \gamma'(0) + \int_0^s \gamma''_\eps(z) \d z
\quad 
\text{and} \quad 
\gamma_\eps''(s) 
= \min\left(\eps^{-1}, \max(\eps, \gamma''(s)\right), \quad s \in [0,1]. 
\]
We infer from the dominated convergence theorem that $\gamma_\eps$ converges uniformly towards $\gamma$ on 
$[0,1]$ as $\eps$ tends to $0$, and from Dini's theorem that the Lipschitz continuous function $\Ss_\eps$ defined by 
\be\label{eq:Ss.beta}
\Ss_\eps(p) = \begin{cases}
0 & \text{if}\; p \leq \gamma_\eps'(0), \\
{(\gamma_\eps')}^{-1}(p) & \text{if}\; \gamma_\eps'(0) \leq p \leq \gamma_\eps'(1), \\
1 & \text{if}\; p \geq \gamma_\eps'(1), \\
\end{cases}
\ee
converges uniformly towards $\Ss$. We also incorporate these regularizations into the graphs $\ov p_{n,\eps}$ and $\ov p_{w,\eps}$ which are defined by~\eqref{eq:ovpa} where $\gamma$ and $\gamma'$ have been replaced by $\gamma_\eps$ and $\gamma_\eps'$ respectively.

\begin{lem}\label{lem:Phi}
Given $\bp=(p_n,p_w) \in \R^2$ and $\pi \in \R$, then there exists a unique $\bphi = (\phi_n, \phi_w) \in \R_+^2$ with 
$\phi_\flat \leq \phi = \phi_n+\phi_w \leq \phi^\sharp$ such that  
\be\label{eq:Pregdis.p}
p_\a = \hat p_{\a}+ \pi + G_\eps(\phi) \quad \text{for some}\; \hat p_{\a} \in \ov p_{\a,\eps}(s_n)\; \text{with}\; s_n = \phi_n / \phi.
\ee
Moreover, the mapping $\bPhi_\eps: \bp - \pi \mapsto \bphi$ is Lipschitz continuous with Lipschitz constant possibly blowing up with $\eps^{-1}$.
\end{lem}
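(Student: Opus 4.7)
The plan is to decouple the system~\eqref{eq:Pregdis.p} into a scalar equation for the saturation $s_n$ and a scalar equation for the total porosity $\phi$, after which $\phi_n = s_n\phi$ and $\phi_w = (1-s_n)\phi$ are read off. The two algebraic facts I will lean on are the identity $\ov p_{n,\eps}(s) - \ov p_{w,\eps}(s) = \Ss_\eps^{-1}(s)$ (in the sense of maximal monotone graphs) and the convex combination $s_n \hat p_n + s_w \hat p_w = \gamma_\eps(s_n)$, valid for $s_n \in (0,1)$ directly from~\eqref{eq:ovpa} and at the endpoints once the appropriate one-sided branch is selected.

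For existence and uniqueness, I would first subtract the two equations in~\eqref{eq:Pregdis.p} to eliminate the common term $\pi + G_\eps(\phi)$, yielding $p_n - p_w \in \ov p_{n,\eps}(s_n) - \ov p_{w,\eps}(s_n) = \Ss_\eps^{-1}(s_n)$. Since $\Ss_\eps:\R\to[0,1]$ is continuous and nondecreasing, this is equivalent to $s_n = \Ss_\eps(p_n - p_w)$ and determines $s_n \in [0,1]$ uniquely. Taking next the convex combination of the two equations with weights $s_n$ and $s_w = 1 - s_n$ and invoking the identity above, one obtains
\[
G_\eps(\phi) = s_n(p_n - \pi) + s_w(p_w - \pi) - \gamma_\eps(s_n).
\]
Because $G_\eps$ is a smooth bijection from $(\phi_\flat,\phi^\sharp)$ onto $\R$ (by~\eqref{eq:Gbeta.strict}), this uniquely determines $\phi \in (\phi_\flat,\phi^\sharp)$. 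Setting $\phi_n := s_n \phi$, $\phi_w := (1-s_n)\phi$, and $\hat p_\alpha := p_\alpha - \pi - G_\eps(\phi)$, it only remains to verify that $\hat p_\alpha \in \ov p_{\alpha,\eps}(s_n)$. For $s_n \in (0,1)$ this is immediate since both graphs are single-valued and the two scalar constraints $\hat p_n - \hat p_w = \gamma_\eps'(s_n)$ and $s_n \hat p_n + s_w \hat p_w = \gamma_\eps(s_n)$ pin down exactly the values prescribed by~\eqref{eq:ovpa}. For $s_n \in \{0,1\}$ one branch is single-valued and fixes $\hat p_w$ or $\hat p_n$, while the other lies in the prescribed half-line precisely because $s_n = \Ss_\eps(p_n - p_w)$ forces $p_n - p_w \leq \gamma_\eps'(0)$ (resp.~$\geq \gamma_\eps'(1)$).

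For the Lipschitz estimate, set $q_\alpha = p_\alpha - \pi$. The function $\Ss_\eps$ is $\eps^{-1}$-Lipschitz since $\gamma_\eps'' \geq \eps$, so $s_n$ is $\eps^{-1}$-Lipschitz in $\bp-\pi$. Writing $F(q_n,q_w) := s_n q_n + s_w q_w - \gamma_\eps(s_n)$ with $s_n = \Ss_\eps(q_n - q_w)$, differentiation gives $\partial_{q_n} F = s_n + \bigl(q_n - q_w - \gamma_\eps'(s_n)\bigr)\Ss_\eps'(q_n - q_w)$, and the correction term vanishes: either $q_n - q_w \in [\gamma_\eps'(0),\gamma_\eps'(1)]$, in which case $q_n - q_w = \gamma_\eps'(s_n)$, or $\Ss_\eps'(q_n-q_w) = 0$. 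Thus $\partial_{q_n}F = s_n \in [0,1]$ and similarly $\partial_{q_w}F = s_w$, so $F$ is $1$-Lipschitz. Since $G_\eps^{-1}$ is $\eps^{-1}$-Lipschitz on $\R$ by~\eqref{eq:Gbeta.strict}, $\phi = G_\eps^{-1}\circ F$ is $\eps^{-1}$-Lipschitz, and the products $\phi_n = s_n\phi$, $\phi_w = s_w\phi$ are Lipschitz as products of bounded Lipschitz functions with constant of order $\eps^{-1}$.

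The only genuinely delicate point is the endpoint verification $\hat p_\alpha \in \ov p_{\alpha,\eps}(s_n)$ when $s_n \in \{0,1\}$, where the graph becomes multivalued; the remainder of the argument is a straightforward exercise in inverting two scalar monotone relations.
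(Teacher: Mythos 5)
Your proof is correct and follows essentially the same route as the paper's: subtract the equations to identify $s_n = \Ss_\eps(p_n - p_w)$, take the $(s_n, s_w)$-convex combination to isolate $G_\eps(\phi)$ via $s_n\hat p_n + s_w\hat p_w = \gamma_\eps(s_n)$, invert $G_\eps$, and set $\phi_\alpha = s_\alpha\phi$. You actually provide more detail than the paper on the endpoint verification $\hat p_\alpha \in \ov p_{\alpha,\eps}(s_n)$ when $s_n \in \{0,1\}$ and, via the cancellation $\partial_{q_n}F = s_n$, give a sharper $O(\eps^{-1})$ Lipschitz bound than a naive composition would yield.
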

\begin{proof}
It follows from~\eqref{eq:Pregdis.p} and from the definition~\eqref{eq:ovpa} of the graphs $\ov p_{\a,\eps}$ that 
\[
s_n = \Ss_\eps(p_n - p_w) = \Ss_\eps(p_n - \pi - (p_w - \pi)), \quad s_w = 1-s_n. 
\]
Moreover, the relation
\[
G_\eps(\phi) = s_n p_n + s_w p_w - \gamma_\eps(s_n) - \pi = s_n (p_n -\pi) + s_w (p_w-\pi) - \gamma_\eps(s_n)
\]
uniquely determines $\phi$ since $G_\eps$ is an invertible function. Then we can reconstruct $\phi_\a = \phi s_\a$. 
The Lipschitz continuity of $\bPhi_\eps$ follows from the Lipschitz continuity of $\Ss_\eps$, $\gamma_\eps$ and $G_\eps^{-1}$.
\end{proof}

The function $\bPhi_\eps$ can be interpreted as the differential of a convex function. Before stating our next lemma, 
we introduce the convex function 
\[
\Gg_\eps:  \begin{cases} 
[\phi_\flat,\phi^\sharp] \to \R_+ \\
z \mapsto \int_{\frac{\phi_\flat+\phi^\sharp}2}^z G_\eps(a) \d a, 
\end{cases}
\]
as well as the convex and compact subset of $\R^2$ 
\[\Kk_\bphi = \{\bphi = (\phi_n, \phi_w) \in \R_+^2 \; | \; \phi_\flat \leq \phi_n +  \phi_w \leq \phi^\sharp \}.\] 
\begin{lem}\label{lem:Phi.2}
Define the convex function $F_\eps: \R^2 \to \R_+$ by  
\[
F_\eps(\bphi) =  \begin{cases}
\phi \gamma_\eps\left(s_n\right) + \Gg_\eps(\phi) \quad \text{if}\; \bphi \in \Kk_\bphi, \\
+ \infty \quad \text{otherwise}, 
\end{cases}
\]
where $\phi = \phi_n+\phi_w$ and $s_n = \phi_n/\phi$, then $F_\eps$ is convex. Moreover, for $(\bp, \pi) \in \R^2 \times \R$ and $\bphi = \bPhi_\eps(\bp- \pi)$, then 
\be\label{eq:DFbeta}
(p_n-\pi, p_w-\pi) = DF_\eps(\bphi).
\ee
\end{lem}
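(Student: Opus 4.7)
The lemma has two parts: convexity of $F_\eps$, and identification of its (sub)differential at $\bphi = \bPhi_\eps(\bp - \pi)$ with $(p_n - \pi, p_w - \pi)$. My plan is to dispatch convexity via a perspective-function argument, then carry out the derivative identification by a direct chain-rule computation and match it against the selectors $\hat p_\a \in \ov p_{\a,\eps}(s_n)$ supplied by \eqref{eq:Pregdis.p}.

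For convexity, the key observation is that on $\Kk_\bphi$ one has $\phi = \phi_n + \phi_w \geq \phi_\flat > 0$, so the nonlinear summand $\phi\gamma_\eps(\phi_n/\phi)$ is nothing but the perspective of $\gamma_\eps$ evaluated at the point $(\phi_n, \phi)$. Since $\gamma_\eps$ is convex (its second derivative $\gamma_\eps'' = \min(\eps^{-1},\max(\eps,\gamma''))$ is nonnegative), a classical Jensen-with-weights argument shows that its perspective is jointly convex on $\{t>0\}$. Composing with the invertible linear map $(\phi_n,\phi_w)\mapsto(\phi_n,\phi_n+\phi_w)$ preserves convexity, so $\bphi\mapsto \phi\gamma_\eps(s_n)$ is convex on $\Kk_\bphi$. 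The second summand $\bphi\mapsto\Gg_\eps(\phi)$ is convex because $\Gg_\eps' = G_\eps$ is nondecreasing and we again compose with a linear map. Adding the convex indicator $\ind_{\Kk_\bphi}$ (which accounts for the $+\infty$ outside $\Kk_\bphi$) then yields a proper, lower-semicontinuous convex $F_\eps$ on $\R^2$.

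For the derivative identification, I would first handle the relative interior of $\Kk_\bphi$, where $s_n\in(0,1)$ and $\phi\in(\phi_\flat,\phi^\sharp)$; there $F_\eps$ is smooth, and using $\partial s_n/\partial\phi_n = s_w/\phi$ and $\partial s_n/\partial\phi_w = -s_n/\phi$ one obtains by direct computation
\[
\partial_{\phi_n} F_\eps(\bphi) = \gamma_\eps(s_n) + s_w\gamma_\eps'(s_n) + G_\eps(\phi),\qquad \partial_{\phi_w} F_\eps(\bphi) = \gamma_\eps(s_n) - s_n\gamma_\eps'(s_n) + G_\eps(\phi).
\]
Comparing with the definition~\eqref{eq:ovpa} of $\ov p_{n,\eps}$ and $\ov p_{w,\eps}$ and invoking \eqref{eq:Pregdis.p}, these equal $\hat p_n + G_\eps(\phi) = p_n - \pi$ and $\hat p_w + G_\eps(\phi) = p_w - \pi$. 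For boundary configurations (i.e.\ $s_n\in\{0,1\}$, noting that $\phi\in(\phi_\flat,\phi^\sharp)$ is automatic from the definition of $\bPhi_\eps$ since $G_\eps$ is a bijection onto $\R$), the symbol $DF_\eps$ is read as the convex subdifferential: the half-line behaviour of $\ov p_{\a,\eps}$ at its endpoints matches exactly the one-sided derivative of $F_\eps$ on $\Kk_\bphi$, so the identification \eqref{eq:DFbeta} persists as an inclusion into $\partial F_\eps(\bphi)$.

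The only non-routine ingredient is the joint convexity of the perspective, which is completely standard; everything else is chain-rule bookkeeping and reading off the definitions. An alternative route would be to show that $\bPhi_\eps$ is the inverse of a continuous monotone map and invoke the integration theorem to recover $F_\eps$ as its convex primitive (yielding convexity and \eqref{eq:DFbeta} simultaneously), but the direct perspective-based argument is more transparent and avoids having to re-identify $F_\eps$ up to an additive constant.
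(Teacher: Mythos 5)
Your proposal is correct and follows essentially the same path as the paper's proof: convexity of $\bphi\mapsto\phi\gamma_\eps(s_n)$ via its $1$-homogeneity (equivalently, the perspective-function argument), convexity of $\bphi\mapsto\Gg_\eps(\phi)$ as a convex composed with a linear map, and the derivative identification by direct chain-rule computation matching $\ov p_{\a,\eps}(s_n)+G_\eps(\phi)$. The only added value in your writeup is that you explicitly treat the boundary configurations $s_n\in\{0,1\}$ (which can occur for $\bphi=\bPhi_\eps(\bp-\pi)$ even though $\phi\in(\phi_\flat,\phi^\sharp)$ is automatic) by reading $DF_\eps$ as a subdifferential, whereas the paper restricts the computation to the interior of $\Kk_\bphi$; this is a welcome clarification but not a different method.
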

\begin{proof}
The function $F_\eps$ is continuous on $\Kk_\bphi$ and continuously differentiable on the interior $\overset{\circ}{\Kk_\bphi}$ of $\Kk_\bphi$. Let $\bphi \in \overset{\circ}{\Kk_\bphi}$, then 
\[
DF_\eps(\bphi) = \left(\ov p_{n, \eps}(s_n) + G_\eps(\phi), \ov p_{w, \eps}(s_n) + G_\eps(\phi) \right), 
\]
the sets $\ov p_{\a, \eps}(s_n)$ being assimilated to their single value since $0< s_n < 1$ as $\bphi \in  \overset{\circ}{\Kk_\bphi}$. In view of formula~\eqref{eq:Pregdis.p}, we deduce that \eqref{eq:DFbeta} holds true. 
Since $\gamma_\eps$ is increasing, since $\Gg_\eps(z) \geq 0$ and since $\phi \geq \phi_\flat$ for $\bphi \in \Kk_\bphi$, one gets the uniform lower bound
\[
F_\eps(\bphi) \geq \phi_\flat \gamma_\eps(0) =  \phi_\flat \gamma(0) \geq 0. 
\]
Finally, the function $\bphi \mapsto \phi \gamma_\eps(s_n)$ is $1$-homogeneous and (not strictly) convex on $\overset{\circ}{\Kk_\bphi}$, while $\bphi\mapsto \Gg_\eps(\phi)$ is the composition of the convex function $\Gg_\eps$ with the linear one $\bphi \mapsto \phi$, so it is convex too, as well as $F_\eps$. 
\end{proof}

\begin{rem}
In the line of Lemma~\ref{lem:Phi}, one can show that $F_\eps$ is uniformly convex for $\eps>0$. We leave to the reader the proof of this property which will not be used explicitly in what follows.
\end{rem}

\subsection{Faedo-Galerkin space discretization}\label{ssec:Galerkin}
Concerning the discretization w.r.t. space, we build on a Faedo-Galerkin approach. Since $V$ is separable, there exists a family $\left(v_k\right)_{k\geq0} \subset V$ such that, denoting by 
$\Vv_k = \operatorname{span}\{v_\ell, \; \ell \leq k \},$ then 
\[
\overline{\bigcup_{k\geq 1} \Vv_k}^V = V.
\]
Besides, we also introduce the complete orthonormal family  $\left(w_k\right)_{k\geq 0}$ of $L^2(\O)$ made of eigenvectors of the Laplace equation with Neumann boundary conditions, i.e. 
\be\label{eq:wk}
\int_\O \grad w_k \cdot \grad v = \lambda_k \int_\O w_k v, \qquad \forall v \in H^1(\O), 
\ee
with $\lambda_0 = 0$, $w_0 \equiv \sqrt{m_\O}$ (here and in what follows, $m_\O$ stands for the $d$-dimensional Lebesgue measure of $\O$), whereas  $\lambda_{k+1} \geq \lambda_k >0$ and $\|w_k\|_{L^2(\O)} = 1$ for $k\geq 1$ as well. 
We denote by 
$\Ww_k = \operatorname{span}\{w_\ell,\; \ell \leq k\} \subset H^1(\O),$ 
and classical results from the spectral theory of self-adjoint compact operators (see for instance~\cite{Brezis11}) show that 
\be\label{eq:Wk.dense}
\overline{\bigcup_{k\geq 1} \Ww_k}^{L^2(\O)} = L^2(\O).
\ee
The (topological) dual $\Ww_k'$ of $\Ww_k$ is identified to $\Ww_k$ thanks to the Riesz theorem building on the $L^2(\O)$-scalar product.
Eventually, we denote by $\ov \bu_k \in V^d$ the unique solution to 
\be\label{eq:ovuk}
\int_\O \left[2 \mu \, \beps(\ov \bu_k) + \lambda \div \ov \bu_k \right] : \beps(\bv) = b \int_\O w_k \div \bv, \qquad \forall \bv \in V^d, \; k \geq 0.
\ee
where $w_k$ is the $k^\text{th}$ eigenvector of the Laplace operator introduced above in~\eqref{eq:wk}. 
We denote by 
\[
\bUu_k = \operatorname{span}\left\{ \ov \bu_\ell, \; 1 \leq \ell \leq k \right\}, 
\]
then $\bUu_k \subset \bU$ thanks to Assumption~(H\ref{H.Omega}). 

\subsection{The discrete and regularized problem with frozen mobility and linearized gravity}

Since $\Kk_\bphi$ is convex in $\R^2$, the orthogonal projection 
\[\bPi: \begin{cases}
\R^2 \to \Kk_\bphi \\
\bphi = (\phi_n, \phi_w) \mapsto \bPi(\bphi) = (\Pi_n(\bphi), \Pi_w(\bphi))
\end{cases}
\] is uniquely defined. 
For $\tilde \bphi = (\tilde \phi_n , \tilde \phi_w) \in L^2(\O)^2$, we define 
\be\label{eq:tilde.sphi}\text{$\tilde \phi = \Pi_n(\tilde \bphi) + \Pi_w(\tilde \bphi)$ \quad and \quad 
$\tilde s_\a = \Pi_\a(\tilde \bphi) / \tilde \phi.$}\ee
Note that these definitions are consistent with the previous ones in the case where $\tilde \bphi \in \Kk_\bphi$.

\begin{prop}\label{prop:Pregdis}
Let 
$\bphi^\star = (\phi_n^\star, \phi_w^\star) \in L^\infty(\O)^2$ with $\bphi^\star \in \Kk_\bphi$ a.e. in $\O$, let $\tilde \bphi = (\tilde \phi_n , \tilde \phi_w) \in L^2(\O)^2$, let $\tilde\bu \in \bU$, and given $\eps>0$ and $h>0$, then for any $k\geq 1$,  there exists a unique $\left(\bphi_k, \bp_k,  \bu_k, \theta_k, \pi_k\right)$ such that $\bp_k= (p_{n,k}, p_{w,k})$ belongs to $\bp^D + \Vv_k$, such that  $\pi_k$ and $\theta_k$ belong to $\Ww^k$ with 
$
\pi_k = M \theta_k, 
$
such that $\bu_k = \hat \bu + \bu_k^{o}$ with $\hat\bu \in \bU$ and $\bu_k^o \in \bUu_k$ respectively solving
\begin{subequations}
\label{eq:discretization.k}
\begin{alignat}{5}\label{eq:mecha.k.1}
\int_\O \hat \bsig: \beps(\bv) &= \int_\O \gravmech(\tilde \bphi) \cdot \bv, 
&\quad& \forall \bv \in V^d, \quad \text{with}\; &\hat \bsig &= 2 \mu \, \beps(\hat \bu) + \lambda \div \hat \bu \bbI, \\
\label{eq:mecha.k.2}
\int_\O \bsig^o_k: \beps(\bv) &= \int_\O b \,\pi_k \div \bv, 
&\quad& \forall \bv \in \bUu_k, \quad \text{with}\ \  &\bsig_k^o &= 2 \mu \, \beps(\bu_k^o) + \lambda \div \bu_k^o \bbI, 
\end{alignat}
such that $\bphi_k = (\phi_{n,k}, \phi_{w,k}) = \bPhi_\eps(\bp_k- \pi_k)$ satisfies
\be\label{eq:constraint.k}
\int_\O (\phi_k - b \div \bu_k - \theta_k) w = \int_\O\phi_r w,\qquad \forall w \in \Ww_k,
\ee
and such that
\be\label{eq:Pregdis.cons.k}
\int_\O \frac{\phi_{\a,k} - \phi_\a^\star}{h} v + \int_\O \frac{k_\eps(\tilde s_\a)}{\mu_\a} \bbK(\tilde \phi) \grad \left( p_{\a,k} - \rho_\a \g\cdot (\bx + \tilde{\bu})\right) \cdot \grad v = 0 \quad \text{for all}\; v \in \Vv_k, \; \a \in \{n,w\}.
\ee
\end{subequations}
\end{prop}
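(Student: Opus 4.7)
The plan is to decouple $\hat{\bu}$, which depends only on the data $\tilde{\bphi}$ through standard linear elasticity, and then recast the remaining coupled problem for $(\bp_k, \pi_k)$ as the Euler--Lagrange equations of a strictly convex, coercive functional on the finite-dimensional affine space $(\bp^D + \Vv_k^2) \times \Ww_k$. Once this minimizer is found, the other unknowns are obtained algebraically: $\theta_k = \pi_k/M$ by~\eqref{eq:3.theta}, $\bphi_k = \bPhi_\eps(\bp_k - \pi_k)$ by Lemma~\ref{lem:Phi}, and $\bu_k^o \in \bUu_k$ as the unique solution to the Galerkin problem~\eqref{eq:mecha.k.2}, whose associated matrix $(a(\ov\bu_\ell,\ov\bu_m))_{\ell,m}$, with $a$ the elasticity bilinear form, is symmetric positive definite by Korn's inequality. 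This yields a well-defined linear solution operator $\pi \mapsto \bu^o(\pi)$ from $\Ww_k$ into $\bUu_k$.

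The existence and uniqueness of $\hat{\bu} \in V^d$ solving~\eqref{eq:mecha.k.1} follow from the Lax--Milgram theorem, using Korn's inequality from~(H\ref{H.Omega}) together with $\gravmech(\tilde{\bphi}) \in L^2(\O)^d$ (since $\tilde{\bphi} \in L^2$ and $\gravmech$ is affine in $\bphi$); the extra regularity $\hat{\bu} \in \bU$ is provided by~\eqref{eq:Brenner.1-l2}. For the coupled finite-dimensional problem, I introduce the functional
\begin{align*}
J_k(\bp, \pi) &= \tfrac{1}{h}\int_\O F_\eps^*(\bp - \pi) - \tfrac{1}{h}\int_\O \bp \cdot \bphi^\star + \tfrac{1}{2}\sum_{\a}\int_\O \tfrac{k_\eps(\tilde s_\a)}{\mu_\a}\bbK(\tilde\phi)\grad p_\a \cdot \grad p_\a \\
&\qquad - \sum_\a \int_\O \tfrac{k_\eps(\tilde s_\a)}{\mu_\a}\bbK(\tilde\phi)\grad\bigl(\rho_\a \g \cdot (\bx + \tilde\bu)\bigr) \cdot \grad p_\a \\
&\qquad + \tfrac{1}{h}\int_\O \phi_r \pi + \tfrac{b}{h}\int_\O \div\hat\bu\,\pi + \tfrac{b}{2h}\int_\O \div\bu^o(\pi)\,\pi + \tfrac{1}{2Mh}\int_\O \pi^2,
\end{align*}
where $F_\eps^*$ denotes the Legendre transform of the convex function $F_\eps$ from Lemma~\ref{lem:Phi.2}, so that $\nabla F_\eps^* = \bPhi_\eps$. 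A direct computation of the Gateaux derivatives in the directions $v \in \Vv_k$ (for each $p_\a$) and $w \in \Ww_k$ (for $\pi$) shows that critical points of $J_k$ on this affine space are exactly the tuples solving~\eqref{eq:Pregdis.cons.k} together with~\eqref{eq:constraint.k}. The only non-trivial identity used in matching the $\pi$-derivative with~\eqref{eq:constraint.k} is the symmetry $\int_\O w\, \div \bu^o(\pi) = \int_\O \pi\, \div \bu^o(w)$ for $\pi, w \in \Ww_k$, which comes from the symmetry of $a$ together with~\eqref{eq:ovuk}.

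Existence then reduces to the direct method. $J_k$ is continuous and convex: convexity follows from convexity of $F_\eps^*$, positivity of the quadratic form in $\grad\bp$, and the identity $b\int_\O \pi\,\div\bu^o(\pi) = 2\mu\int_\O |\beps(\bu^o(\pi))|^2 + \lambda\int_\O |\div\bu^o(\pi)|^2 \geq 0$ obtained by testing~\eqref{eq:mecha.k.2} against $\bv = \bu^o(\pi)$. For coercivity, $\tfrac{1}{2Mh}\int\pi^2$ controls $\|\pi\|_{L^2}^2$; the bound $k_\eps(\tilde s_\a)\bbK(\tilde\phi) \geq K_\flat \eps\, \bbI$ combined with Poincar\'e on $\bp^D + \Vv_k^2$ (valid because $\Gamma^D$ has positive measure) yields control of $\|\bp\|_{H^1}^2$; and $\int_\O F_\eps^*(\bp - \pi) \geq -C\bigl(1 + \|\bp\|_{L^2} + \|\pi\|_{L^2}\bigr)$ since $F_\eps$ has bounded effective domain $\Kk_\bphi$. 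Hence $J_k$ attains its infimum on the finite-dimensional affine space.

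For uniqueness, strict convexity in the $\pi$-direction is provided by the $\tfrac{1}{2Mh}\int\pi^2$ term, and for fixed $\pi$ the quadratic in $\grad\bp$ is strictly convex in $\bp$ (additive constants being pinned down by the Dirichlet trace on $\Gamma^D$). Thus any two minimizers must first coincide in $\pi$ and then in $\bp$, after which all derived quantities are uniquely determined. I expect the main delicate point to be the careful matching of the $\pi$-derivatives of $J_k$ with the constraint~\eqref{eq:constraint.k}, which hinges on the symmetry identity for $\bu^o$ coming from~\eqref{eq:ovuk}; once that piece is in place, the convexity, coercivity and strict convexity checks are essentially routine.
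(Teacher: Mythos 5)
Your proposal is correct, and it takes a genuinely different --- though closely related --- route from the paper's. The paper sets up a nonlinear map $\bHh_k$ on the product space $(\Vv_k)^2\times\bUu_k\times\Ww_k\times\Ww_k$, keeping $\bu_k^o$ and $\theta_k$ as independent unknowns with their own residual equations, and proves strong monotonicity of $\bHh_k$ directly by estimating $\langle \bHh_k(\bY^{(1)})-\bHh_k(\bY^{(2)}),\bY^{(1)}-\bY^{(2)}\rangle$ from below (using the convexity of $F_\eps$, Korn, $k_\eps\geq\eps$, $\bbK\geq K_\flat\bbI$, and the elementary inequality $x^2+(x-y)^2\geq \tfrac{3-\sqrt5}{2}(x^2+y^2)$ for the $(\theta_k,\pi_k)$ block), then invokes a Browder--Minty type result for existence and uniqueness. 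You instead eliminate $\theta_k=\pi_k/M$ and $\bu_k^o=\bu^o(\pi_k)$ at the outset --- both are affine in $\pi_k$, the latter via the Galerkin elasticity solve --- and recast the remaining problem for $(\bp_k,\pi_k)$ as minimization of a convex coercive functional $J_k$ built from the Legendre transform $F_\eps^*$, so that existence follows from the direct method in finite dimensions and uniqueness from strict convexity. The key technical ingredients are the same in both arguments; what your variational formulation buys is a smaller unknown space and an explicit exposition of the gradient-flow structure at the discrete level, at the cost of having to verify that critical points of $J_k$ reproduce the constraint~\eqref{eq:constraint.k} (the symmetry identity $\int_\O w\,\div\bu^o(\pi)=\int_\O\pi\,\div\bu^o(w)$, which you correctly trace to \eqref{eq:ovuk} and the symmetry of the elasticity form) and to justify $\nabla F_\eps^*=\bPhi_\eps$ (which holds since $F_\eps$ is convex, lower semi-continuous, and has single-valued, Lipschitz subdifferential inverse $\bPhi_\eps$ by Lemma~\ref{lem:Phi}). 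The paper's monotone-operator route avoids these verifications at the price of working with a larger system and a less transparent structure; both arguments are sound.
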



 \begin{rem}\label{rem:mecha.k}
 As a result of the particular choice~\eqref{eq:ovuk} for the basis functions $\left(\ov \bu_\ell\right)_{1 \leq \ell \leq k}$ of $\Uu_k$, the relation~\eqref{eq:mecha.k.2} 
 holds true for all $\bv \in V^d$ and not only for $\bv \in \bUu_k$. Indeed, since $\pi_k$ belongs to $\Ww_k$, it can be written decomposed into
 $
 \pi_k = \sum_{\ell = 0}^k \pi_{k,\ell} w_\ell
 $
 with $w_\ell$ fulfilling~\eqref{eq:wk}. Then we deduce from~\eqref{eq:ovuk} that $\bu_k^o= \sum_{\ell = 0}^k \pi_{k,\ell} \ov \bu_\ell$ satisfies
 \be\label{eq:2}
 \int_\O \bsig_k^o: \beps(\bv) = \int_\O b \, \pi_k \div \bv, \qquad \forall \bv \in V^d. 
 \ee
 In other words, $\bu_k^o$ is the genuine continuous solution to the linear mechanics system corresponding to the approximate right-hand side $b \grad \pi_k$, and thus so does $\bu_k = \hat \bu + \bu_k^o$ for the full body force term $b \grad \pi_k + \gravmech(\tilde \bphi)$, i.e.
 \be\label{eq:mecha.k}
 \int_\O \bsig_k:\beps(\bv) = \int_\O \left( \gravmech(\tilde \bphi)\cdot \bv + b \pi_k \div \bv\right), \quad \forall \bv \in V^d, \quad \text{with}\; 
 \bsig_k = 2 \mu\, \beps(\bu_k) + \lambda \, \div \bu_k \, \bbI.
 \ee
 Then owing to Assumption~(H\ref{H.Omega}) and~(H\ref{H.gravity}), for all $k\geq 0$ the triangle inequality yields
\be
\label{eq:D2uk}
\| \grad(\div \bu_k)  \|_{L^2(\O)^{d}} 
\leq 
\frac{\cter{cte:mecha}}\lambda \| \gravmech(\tilde \bphi) \|_{L^2(\O)^d}
+
\frac{b\, \cter{cte:mecha}}\lambda \| \pi_k\|_{H^1(\O)}.
\ee
 \end{rem}

\begin{proof}[Proof of Proposition~\ref{prop:Pregdis}]
Define 
\[
\bHh_k: \begin{cases}
(\Vv_k)^2 \times \bUu_k \times \Ww_k \times \Ww_k \to (\Vv_k')^2 \times \bUu_k' \times \Ww_k'\times \Ww_k' \\
(\bp_k^{o} = (p^{o}_{n,k}, p^{o}_{w,k}), \bu_k^o, \theta_k, \pi_k) \mapsto \left( \left(r_{n,k},r_{w,k}\right), \br_{\bu,k}, r_{\theta,k}, r_{\pi,k} \right)
\end{cases}
\]
by setting $\bphi_k=  \bPhi_\eps(\bp_k^o + \bp^D- \pi_k)$, $p_{\a,k} = p_{\a,k}^o + p_\a^D$, $\bu_k = \hat \bu + \bu^o_k$ and
\begin{align*}
\langle r_{\a,k}, v \rangle_{\Vv_k',\Vv_k} =&\;  \int_\O (\phi_{\a,k} - \phi_\a^\star) v + {h} \int_\O \frac{k_\eps(\tilde s_\a)}{\mu_\a} \bbK(\tilde \phi) \grad (p_{\a,k} - \rho_\a \g\cdot (\bx + \tilde{\bu})) \cdot \grad v, 
\quad \forall v \in \Vv_k, 
\\
\langle \br_{\bu,k}, \bv \rangle_{\bUu_k',\bUu_k} =&\; \int_\O \left\{ 2 \mu \, \beps(\bu_k^o) : \beps(\bv) + \lambda (\div\bu_k^o)(\div\bv) - b\, \pi_k\, \div\bv \right\}, \, \qquad \forall \bv \in \bUu_k,
\end{align*}
and 
\[
r_{\theta,k} = 2(M\theta_k - \pi_k), \qquad r_{\pi,k} = - \phi_k + \phi_r+ \theta_k + b \div \bu_k^o + b \div \hat \bu + \theta_k - \frac{\pi_k}M.
\]
Let $\bY_k^{(i)} = \left(\bp_k^{o,(i)} = (p_{n,k}^{o,(i)}, p_{w,k}^{o,(i)}), \bu_k^{o,(i)}, \theta_k^{(i)}, \pi_k^{(i)}\right)$, $i=1,2$, be two elements of 
$(\Vv_k)^2 \times \bUu_k \times \Ww_k \times \Ww_k $, 
and denote by $ \bR_k^{(i)} = \left( (r_{n,k}^{(i)},r_{w,k}^{(i)}), \br_{\bu,k}^{(i)}, r_{\theta,k}^{(i)}, r_{\pi,k}^{(i)} \right) = \bHh\left(\bY_k^{(i)}\right)$, then 
one checks that 
\begin{align*}
\label{eq:monotone} 
\left\langle \bR_k^{(1)} - \bR_k^{(2)}\, , \, \bY_k^{(1)} - \bY_k^{(2)} \right\rangle= 
&
 \sum_{\a\in\{n,w\}} \int_\O (\phi_{\a,k}^{(1)}-\phi_{\a,k}^{(2)})(p_{\a,k}^{(1)} - \pi_k^{(1)} - p_{\a,k}^{(2)} + \pi_k^{(2)}) 
\\
\nn
&
+ {h}  \sum_{\a\in\{n,w\}} \int_\O \frac{k_\eps(\tilde s_\a)}{\mu_\a} \bbK(\tilde \phi) \grad (p_{\a,k}^{(1)} - p_{\a,k}^{(2)}) \cdot  \grad (p_{\a,k}^{(1)} - p_{\a,k}^{(2)})
 \\
 \nn
 &
 + \int_\O \left\{ 2 \mu \, \beps(\bu_k^{(1)} - \bu_k^{(2)}) :  \beps(\bu_k^{(1)} - \bu_k^{(2)}) + \lambda \left(\div (\bu_k^{(1)} - \bu_k^{(2)}) \right)^2  \right\} 
 \\
 \nn
 &
 + \int_\O \left\{M (\theta_k^{(1)}-\theta_k^{(2)})^2 + \frac1M\left( M(\theta_k^{(1)} - \theta_k^{(2)}) - \pi_k^{(1)}+\pi_k^{(2)} \right)^2\right\}.
\end{align*}
with gravity contributions canceling, as well as those related to $\hat \bu$. 
Owing to the convexity of $\bPhi_\eps$ established in Lemma~\ref{lem:Phi.2}, one has
\[
 \sum_{\a\in\{n,w\}} \int_\O (\phi_{\a,k}^{(1)}-\phi_{\a,k}^{(2)})(p_{\a,k}^{(1)} - \pi_k^{(1)} - p_{\a,k}^{(2)} + \pi_k^{(2)})  \geq0.
 \]
Since  $\bbK(\tilde \phi) \geq K_\flat \bbI$ and $\mu_\a \leq \mu^\sharp$, one gets that 
\[
 \sum_{\a\in\{n,w\}} \int_\O \frac{k_\eps(\tilde s_\a)}{\mu_\a} \bbK(\tilde \phi) \grad (p_{\a,k}^{(1)} - p_{\a,k}^{(2)}) \cdot  \grad (p_{\a,k}^{(1)} - p_{\a,k}^{(2)})
\geq  \eps \frac{K_\flat}{\mu^\sharp}  \big\|\bp_k^{(1)} - \bp_k^{(2)} \big\|^2_{V^2}.
\]
Note that $\Vv_k$ is equipped with the norm $\| v \|_V = \|\grad v \|_{(L^2)^d}$ which is a norm since we assumed that $\G^D$ has positive measure thanks to Poincaré inequality. We also deduce from Korn inequality that there exists $\ctel{cte:Korn}$ depending only on $\O$ and $\G^D$ (but not on $k$) such that 
\[
 \int_\O \left\{ 2 \mu \, \beps(\bu_k^{(1)} - \bu_k^{(2)}) :  \beps(\bu_k^{(1)} - \bu_k^{(2)}) + \lambda \left(\div (\bu_k^{(1)} - \bu_k^{(2)}) \right)^2  \right\} 
 \geq \cter{cte:Korn} \mu \| \bu_k^{(1)} - \bu_k^{(2)} \|_{V^d}^2. 
\]
Further, elementary calculations show that 
\[
x^2 + (x - y)^2 \geq \frac{3-\sqrt5}2 (x^2 + y^2), \qquad \forall x,y \in \R, 
\]
so that 
\begin{multline*}
 \int_\O \left\{M (\theta_k^{(1)}-\theta_k^{(2)})^2 + \frac1M\left( M(\theta_k^{(1)} - \theta_k^{(2)}) - \pi_k^{(1)}+\pi_k^{(2)} \right)^2\right\}
\\ \geq \frac{3-\sqrt5}{2}\left( M \| \theta_k^{(1)} - \theta_k^{(2)}\|_{L^2(\O)} + \frac1M  \| \pi_k^{(1)} - \pi_k^{(2)}\|_{L^2(\O)} \right).
\end{multline*}
Then we infer from previous estimates that there exists~$\ctel{cte:monotone}>0$ depending on the data of the continuous problem as well as on $\eps$ and ${h}$ (but neither on $k$ nor on $\eps$) such that 
\begin{multline*}
\left\langle \bR_k^{(1)} - \bR_k^{(2)}\, , \, \bY_k^{(1)} - \bY_k^{(2)} \right\rangle \\\geq \cter{cte:monotone} \Big( \|\bp_k^{(1)} - \bp_k^{(2)}\|_{V^2}^2 +  \|\bu_k^{(1)} - \bu_k^{(2)}\|_{V^d}^2 
+ \| \theta_k^{(1)} - \theta_k^{(2)} \|_{L^2(\O)}^2 +  \| \pi_k^{(1)} - \pi_k^{(2)} \|_{L^2(\O)}^2\Big).
\end{multline*}
As a consequence, $\bHh_k$ is strongly monotone, whence there exists a unique $\bY_k = (\bp_k^o, \bu_k^o, \theta_k, \pi_k)$ such that $\bHh_k(\bY_k) = \0$ owing to~\cite[Corollaire 17]{Brezis6566} (see also \cite{Brezis73, Lions69}). This also allows to reconstruct $\bphi_k = \bPhi_\eps(\bp_k - \pi_k)$.
\end{proof}

The above proof (and thus the statement of Proposition~\ref{prop:Pregdis}) is still valid at the limit $k\to +\infty$, leading to the well-posedness of the 
limiting problem in the Hilbert space $V^2 \times V^d \times L^2(\O) \times L^2(\O)$. The regularity provided by Proposition~\ref{prop:Pregdis} is however not sufficient to carry out our mathematical study and to pass to the limit to recover the continuous problem. This was the motivation for the space discretization, the regularizing effect of which being needed to establish rigorously the following proposition. 

\begin{prop}\label{prop:Pregdis.NRG}
Define the approximate Helmholtz free energy $\Ff_\eps$ by 
\[\Ff_\eps(\bX) = \int_\O \left(F_\eps(\bphi) + \mu \, \beps(\bu):\beps(\bu) + \frac\lambda2 |\div\bu|^2 + \frac M2|\theta|^2\right) 
\quad \text{for}\; \bX = (\bphi, \bu, \theta, \pi).\]
Let $\bphi^\star \in L^\infty(\O; \R_+^2)$ with $\bphi^\star \in \Kk_\bphi$ a.e. in $\O$, and let $\bu^\star \in \bU$ and $\pi^\star = M \theta^\star$ in $L^2(\O)$ be such that 
\be\label{eq:constraint.star}
\phi^\star - b \div \bu^\star - \theta^\star = \phi_r, \qquad \text{with}\; \phi^\star = \phi_n^\star + \phi_w^\star. 
\ee
Denoting by $\bX^\star = (\bphi^\star, \bu^\star, \theta^\star, \pi^\star)$, then for any $\tilde \bphi \in L^2(\O)^2$ and $\tilde \bu \in H^1(\Omega)^d$, the unique solution $\bX_k = (\bphi_k, \bu_k, \theta_k, \pi_k)$ to the discrete regularized problem, cf. Proposition~\ref{prop:Pregdis}, 
satisfies 
\begin{multline}
\left( 1 - \cter{cte:Pregdis.NRG.2} h \eps^2 \right) \Ff_\eps(\bX_k) \\
+ {h}\eps \cter{cte:Pregdis.NRG}  \left( \sum_{\a \in\{n,w\}}  \left\|\grad p_{\a,k} \right\|_{L^2(\O)^d}^2 + \left\|\grad (G_\eps(\phi_k) + \pi_k) \right\|_{L^2(\O)^d}^2+ \eps \left\|\pi_k \right\|_{H^1(\O)}^2\right) \\\leq \Ff_\eps(\bX^\star) +  \int_\O \gravmech(\tilde \bphi) \cdot (\bu_k - \bu^\star) \\+ \sum_{\a \in \{n,w\}} \int_\O (\phi_{\a,k} - \phi_\a^\star) p_\a^D + \cter{cte:NRG.1} {h}  
(1 + \|\tilde \bu\|_{V^d}^2 + \|\tilde \bphi \|_{L^2(\O)^2}^2),
\label{eq:Pregdis.NRG}
\end{multline}
for positive constants~$\ctel{cte:Pregdis.NRG}$, $\ctel{cte:Pregdis.NRG.2}$ and $\ctel{cte:NRG.1}$ which neither depend on $k,\eps$ and ${h}$, nor on $\tilde \bphi$ or $\tilde \bu$. 
Moreover, for $\eps$ small enough to ensure that $\cter{cte:Pregdis.NRG.2} h \eps^2\leq 1$,  there exists $\ctel{cte:chi}$ depending neither on $k$ nor on $\tilde \bphi$ (but possibly on $\eps$ and $h$) such that 
\be\label{eq:NRG.chi.k}
\| \pi_k\|_{H^1(\O)} + \|G_\eps(\phi_k) \|_{H^1(\O)} \leq  \cter{cte:chi}. 
\ee
\end{prop}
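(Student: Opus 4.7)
The strategy is to establish a discrete analogue of the thermodynamic identity \eqref{eq:dFfdt} by exploiting the convex structure of $\Ff_\eps$, and then to close a Brenner--Sung-type absorption loop for $\pi_k$ that uses the weak-coupling condition (H\ref{H.weak-coupling}) decisively. I would proceed in four steps.

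\textbf{Step 1 (energy-dissipation identity).} By convexity of $F_\eps$ (Lemma~\ref{lem:Phi.2}) and of the quadratic contributions in $\Ff_\eps$, using $DF_\eps(\bphi_k)=(p_{n,k}-\pi_k,p_{w,k}-\pi_k)$, I bound
\[
\Ff_\eps(\bX_k)-\Ff_\eps(\bX^\star)\le\int_\O\!\Big[\sum_\a(p_{\a,k}-\pi_k)(\phi_{\a,k}-\phi_\a^\star)+\bsig_k\!:\!\beps(\bu_k-\bu^\star)+M\theta_k(\theta_k-\theta^\star)\Big].
\]
I then test the extended mechanical equation from Remark~\ref{rem:mecha.k} against $\bv=\bu_k-\bu^\star\in V^d$, the constraint \eqref{eq:constraint.k} (and its counterpart for $\bX^\star$) against $w=\pi_k\in\Ww_k$, and use $M\theta_k=\pi_k$. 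After cancellation of the $\pi_k$-pieces these three contributions collapse to $\int\gravmech(\tilde\bphi)\cdot(\bu_k-\bu^\star)+\sum_\a\int p_{\a,k}(\phi_{\a,k}-\phi_\a^\star)$. Finally, testing \eqref{eq:Pregdis.cons.k} with $v=p_{\a,k}-p_\a^D\in\Vv_k$ converts the last sum into a boundary contribution $\sum_\a\int p_\a^D(\phi_{\a,k}-\phi_\a^\star)$ plus a dissipative quadratic form in $\grad p_{\a,k}$; the lower bounds $k_\eps\ge\eps$, $\bbK\ge K_\flat\bbI$ produce the $h\eps C_1\sum_\a\|\grad p_{\a,k}\|^2$ contribution on the left. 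The lower-order cross terms generated by $\grad p_\a^D$ and $\grad(\g\cdot(\bx+\tilde\bu))$ are absorbed by Young's inequality (yielding the $\|\tilde\bu\|_{V^d}^2$ penalty), while $\int\gravmech\cdot\bu_k$ is absorbed into $\mu\|\beps(\bu_k)\|^2$ via Korn+Poincaré and a small fraction of $\Ff_\eps(\bX_k)$.

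\textbf{Step 2 (control of $\grad(\pi_k+G_\eps(\phi_k))$).} From \eqref{eq:Kirchhoff.1} adapted to $\gamma_\eps$, the identity $s_{n,k}\grad p_{n,k}+s_{w,k}\grad p_{w,k}=\grad(\pi_k+G_\eps(\phi_k))$ holds pointwise; together with $s_{n,k}+s_{w,k}=1$ and Cauchy--Schwarz it gives $|\grad(\pi_k+G_\eps(\phi_k))|^2\le\sum_\a|\grad p_{\a,k}|^2$, yielding the second term on the LHS of \eqref{eq:Pregdis.NRG}.

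\textbf{Step 3 (the $H^1$ bound on $\pi_k$ — the main difficulty).} Since $\Ww_k$ is invariant under the Neumann Laplacian, $-\Delta\pi_k\in\Ww_k$; testing \eqref{eq:constraint.k} with $w=-\Delta\pi_k$ and integrating by parts (the boundary terms vanish because the $w_\ell$ satisfy zero Neumann BC) yields
\[
\tfrac1M\|\grad\pi_k\|_{L^2}^2=\int_\O\grad(\phi_k-\phi_r)\cdot\grad\pi_k-b\int_\O\grad(\div\bu_k)\cdot\grad\pi_k.
\]
I then combine Cauchy--Schwarz with the Brenner--Sung bound \eqref{eq:D2uk} and invoke $\lambda>Mb^2\cter{cte:mecha}$ from (H\ref{H.weak-coupling}) to absorb the term proportional to $\|\pi_k\|_{H^1}$, leaving $\|\grad\pi_k\|_{L^2}\lesssim\|\grad\phi_k\|_{L^2}+\|\gravmech\|_{L^2}+\|\pi_k\|_{L^2}$. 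The $L^2$ norm of $\pi_k$ is controlled via $\|\pi_k\|_{L^2}^2=M^2\|\theta_k\|_{L^2}^2\le 2M\Ff_\eps(\bX_k)$, while $\|\grad\phi_k\|_{L^2}$ is dominated by $\sum_\a\|\grad p_{\a,k}\|_{L^2}$ thanks to the Lipschitz continuity of $\bPhi_\eps$ from Lemma~\ref{lem:Phi}. Multiplying by $\eps$ converts $\eps\|\grad\phi_k\|^2$ into a \emph{small} multiple of $\sum_\a\|\grad p_{\a,k}\|^2$ (which can be absorbed into the dissipation produced in Step~1) plus an $h\eps^2\Ff_\eps(\bX_k)$ remainder that accounts for the $(1-\cter{cte:Pregdis.NRG.2}h\eps^2)$ prefactor on the LHS of \eqref{eq:Pregdis.NRG}. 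This delivers the $h\eps^2C_1\|\pi_k\|_{H^1}^2$ contribution.

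\textbf{Step 4 (deduction of \eqref{eq:NRG.chi.k}).} Once \eqref{eq:Pregdis.NRG} is established, its right-hand side depends on fixed data only. For $\eps$ small enough that $\cter{cte:Pregdis.NRG.2}h\eps^2\le1$, the $h\eps^2 C_1\|\pi_k\|_{H^1}^2$ term on the LHS gives the $k$-independent bound $\|\pi_k\|_{H^1}\le\cter{cte:chi}(\eps,h)$, and the triangle inequality $\|G_\eps(\phi_k)\|_{H^1}\le\|\pi_k+G_\eps(\phi_k)\|_{H^1}+\|\pi_k\|_{H^1}$ together with Step~2 closes the argument. The main obstacle is Step~3: it is precisely the interplay between the constraint (which transfers regularity from $\phi_k$ and $\div\bu_k$ to $\theta_k=\pi_k/M$) and the Brenner--Sung estimate (which transfers regularity back from $\pi_k$ to $\div\bu_k$) that forces a contraction estimate with ratio $Mb^2\cter{cte:mecha}/\lambda$; the strict inequality in (H\ref{H.weak-coupling}) is exactly what makes this loop closable.
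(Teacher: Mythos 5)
Your Steps 1, 2 and 4 are essentially the paper's argument (testing the fluid equation by $h(p_{\a,k}-p_\a^D)$, convexity of $F_\eps$, the constraint identity in $\Ww_k$, and the boundary-trace reconstruction of $\|G_\eps(\phi_k)\|_{H^1}$). The genuine gap is in Step 3, at the point where you bound $\|\grad\phi_k\|_{L^2}$ by $\sum_\a\|\grad p_{\a,k}\|_{L^2}$ via the Lipschitz continuity of $\bPhi_\eps$. Lemma~\ref{lem:Phi} explicitly warns that this Lipschitz constant $\Lambda_\eps$ blows up like $\eps^{-1}$. Consequently $\eps\|\grad\phi_k\|^2\le\eps\Lambda_\eps^2\sum_\a\|\grad p_{\a,k}\|^2$ has an $\eps$-dependent prefactor $\eps\Lambda_\eps^2\sim\eps^{-1}$ which is \emph{not} small; it cannot be absorbed into the dissipation $h\eps\,\cter{cte:Pregdis.NRG}\sum_\a\|\grad p_{\a,k}\|^2$ produced in Step~1 with a constant independent of $\eps$. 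This ruins exactly the claim that $\cter{cte:Pregdis.NRG}$ and $\cter{cte:Pregdis.NRG.2}$ do not depend on $\eps$.

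The paper avoids ever estimating $\|\grad\phi_k\|$ in isolation. Instead, it exploits the structural lower bound $G_\eps'\ge\eps$ from~\eqref{eq:Gbeta.strict}: writing $\wt G_\eps(z)=G_\eps(z)-\eps z$, the function $\wt G_\eps$ is non-decreasing, so $\grad\wt G_\eps(\phi_k)\cdot\grad\phi_k\ge 0$, and the expansion
\[
|\grad(G_\eps(\phi_k)+\pi_k)|^2
=|\grad(\wt G_\eps(\phi_k)+\pi_k)|^2+\eps^2|\grad\phi_k|^2+2\eps\,\grad\wt G_\eps(\phi_k)\cdot\grad\phi_k+2\eps\,\grad\phi_k\cdot\grad\pi_k
\]
yields directly $|\grad(G_\eps(\phi_k)+\pi_k)|^2\ge2\eps\,\grad\phi_k\cdot\grad\pi_k$. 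The scalar product $\grad\phi_k\cdot\grad\pi_k$ (not the separate norm of $\grad\phi_k$) is then rewritten via the constraint identity in $\Ww_k$ and the Brenner--Sung bound~\eqref{eq:D2uk}, with all constants $\eps$-free. You should replace the Lipschitz argument by this monotonicity trick; your constraint identity and your use of (H\ref{H.weak-coupling}) to close the loop are then compatible with the paper's route, and the $h\eps^2$ prefactor on the LHS emerges from the $-4\eps\Ff_\eps(\bX_k)$ remainder generated when converting $M\|\theta_k\|_{L^2}^2$ back into energy, not from a Lipschitz penalty.

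A secondary remark on Step 4: the right-hand side of~\eqref{eq:Pregdis.NRG} is not purely data-dependent because of the term $\int_\O\gravmech(\tilde\bphi)\cdot(\bu_k-\bu^\star)$. It involves $\bu_k$ and must first be absorbed into the $\Ff_\eps(\bX_k)$ on the left via Poincar\'e--Korn and Young, using that $1-\cter{cte:Pregdis.NRG.2}h\eps^2$ is bounded below; only then is the right-hand side uniformly bounded in $k$.
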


\begin{proof}
Testing~\eqref{eq:Pregdis.cons.k} by ${h}\, (p_{\a,k}- p_\a^D) \in \Vv_k$ and summing over $\alpha \in \{n,w\}$ provides 
\[
\Aa_k + \Bb_k + \Dd_k = \Rr_{1,k} + \Rr_{2,k} + \Rr_{3}, 
\]
where 
\begin{align*}
\Aa_k =& \sum_{\a\in\{n,w\}}\int_\O (\phi_{\a,k} - \phi_\a^\star) (p_{\a,k} - \pi_{a,k}), 
\\
\Bb_k =& \sum_{\a\in\{n,w\}}\int_\O (\phi_{\a,k} - \phi_\a^\star) \pi_{\a,k}, 
\\
\Dd_k = & \;h \sum_{\a \in \{n,w\}} \int_\O \frac{k_\eps(\tilde s_\a)}{\mu_\a} \bbK(\tilde \phi) \grad p_{\a,k} \cdot \grad p_{\a,k}, 
\\
\Rr_{1,k} = & \sum_{\a \in \{n,w\}} \int_\O (\phi_{\a,k} - \phi_\a^\star) p_\a^D,
\\
\Rr_{2,k} = &   \;h \sum_{\a \in \{n,w\}}  \int_\O \frac{k_\eps(\tilde s_\a)}{\mu_\a} \bbK(\tilde \phi) \grad p_{\a,k} \cdot \grad \left(\rho_\a \g\cdot (\bx + \tilde{\bu}) + p_\a^D\right),
\\
\Rr_{3,k} = &   - h\sum_{\a \in \{n,w\}}  \int_\O \frac{k_\eps(\tilde s_\a)}{\mu_\a} \bbK(\tilde \phi) \grad p_\a^D \cdot \grad\left( \rho_\a\g \cdot (\bx + \tilde \bu) \right).
\end{align*}
Young's inequality implies that 
\[
\Rr_{2,k} \leq \frac12 \Dd_k + \frac12 h \sum_{\a \in \{n,w\}}  \int_\O\frac{k_\eps(\tilde s_\a)}{\mu_\a} \left|\bbK(\tilde \phi)^{1/2} \grad \left(\rho_\a \g\cdot (\bx + \tilde \bu) + p_\a^D\right)\right|^2, 
\]
so that 
\be\label{eq:ADRk}
\Aa_k + \Bb_k + \frac12 \Dd_k \leq \Rr_{1,k} + \cter{cte:NRG.1} {h} (1 + \| \tilde \bu \|_{V^d}^2)
\ee
for some $\cter{cte:NRG.1}$ independent on $k$,${h}$, and $\epsilon$.
As a consequence of Lemma~\ref{lem:Phi.2}, we deduce from a convexity inequality that 
\[
\Aa_k =  \sum_{\a \in \{n,w\}} \int_\O (\phi_{\a,k}- \phi_\a^\star)(p_{\a,k} - \pi_k) \geq \int_\O (F_\eps(\bphi_k) - F_\eps(\bphi^\star)).
\]
On the other hand, we infer from~\eqref{eq:constraint.k} and \eqref{eq:constraint.star} that 
\[
\Bb_k = \int_\O (\phi_{k}- \phi^\star) \pi_k= \int_\O (b \div (\bu_{k} - \bu^\star) +  \theta_k-\theta^\star) \pi_k.
\]
Since $\pi_k = M\theta_k$, the elementary convexity inequality $(x-y)x \geq \frac12(x^2-y^2)$ provides
\[
\int_\O (\theta_k-\theta^\star) \pi_k \geq \frac{M}2\left( \|\theta_k\|_{L^2(\O)}^2 -  \|\theta^*\|_{L^2(\O)}^2\right). 
\]
Besides, employing the same inequality again, we deduce from~\eqref{eq:mecha.k} that 
\begin{align*}
 \int_\O b \div (\bu_{k} - \bu^\star) \pi_k =& \int_\O \bsig_k : \beps(\bu_{k} - \bu^\star)- \int_\O \gravmech(\tilde \bphi)\cdot(\bu_k - \bu^\star) \\
 \geq& \int_\O \mu \left(\beps(\bu_k):\beps(\bu_k) - \beps(\bu^\star):\beps(\bu^\star)\right) \\
 & \qquad + \frac\lambda2 
\left(| \div\bu_k|^2- | \div\bu^\star|^2\right)
 - \int_\O \gravmech(\tilde \bphi)\cdot(\bu_k - \bu^\star).
\end{align*}
Collecting the above estimates, we get that 
\be\label{eq:Ak}
\Aa_k + \Bb_k \geq \Ff_\eps(\bX_k) - \Ff_\eps(\bX^\star)- \int_\O \gravmech(\tilde \bphi)\cdot(\bu_k - \bu^\star).
\ee
Due to the assumptions on $\bphi^\star$ and to the regularity of $\bu^\star$ and $\theta^\star$, the regularized Helmholtz free energy $\Ff_\eps(\bX^\star)$ 
of $\bX^\star = (\bphi^\star, \bu^\star, \theta^\star, M\theta^\star)$ is finite. In the end, the gravity-related contribution is linear in $\bu_k$ which will allow its control.
\medskip

On the other hand, since $k_\eps(\tilde s_\a) \geq \eps$, since $\bbK(\tilde \phi) \geq K_\flat \bbI$ and since $0 \leq s_{\a,k} \leq 1$, one gets that 
\begin{align}
\Dd_k \geq& {h} \eps\frac{K_\flat}{\mu^\sharp} \sum_{\a \in \{n,w\}} \int_\O |\grad p_{\a,k}|^2 \nonumber\\
\geq & {h}  \eps \frac{K_\flat}{2 \mu^\sharp} \sum_{\a \in \{n,w\}} \left( \int_\O |\grad p_{\a,k}|^2 + \int_\O s_{\a,k} |\grad p_{\a,k}|^2 \right) =:\Dd_{1,k} + \Dd_{2,k}. 
\label{eq:piH1.-1}
\end{align}
The first term in the right-hand side provides some control on $\|p_{\a,k}\|_{V}$. Concerning the second contribution, the relation $\sum_{\a \in \{n,w\}} s_{\a,k} \grad \hat p_{\a,k} = \0$, cf.~\eqref{eq:Kirchhoff.1}, allows to reformulate 
\begin{align}
\sum_{\a \in\{n,w\}} s_{\a,k} |\grad p_{\a,k}|^2 =& \sum_{\a \in\{n,w\}} s_{\a,k} |\grad (\hat p_{\a,k} + G_\eps(\phi_k) + \pi_k)|^2 \nonumber\\
=&  \sum_{\a \in\{n,w\}} s_{\a,k} |\grad \hat p_{\a,k}|^2  + |\grad (G_\eps(\phi_k) + \pi_k)|^2 \geq  |\grad (G_\eps(\phi_k) + \pi_k)|^2.  \label{eq:piH1.0}
\end{align}
Introducing the non-decreasing function $\wt G_\eps: z \mapsto G_\eps(z) - \eps\, z$, 
the latter term rewrites 
\begin{align*}
 |\grad (G_\eps(\phi_k) + \pi_k)|^2 =\;&  |\grad (\wt G_\eps(\phi_k) + \eps \phi_k + \pi_k)|^2 \\
 =\; &  |\grad (\wt G_\eps(\phi_k)+ \pi_k)|^2 + \eps^2 |\grad \phi_k|^2 + 2 \eps \grad \phi_k \cdot \grad \pi_k + 2 \grad \wt G_\eps(\phi_k) \cdot \grad \phi_k.
\end{align*}
We deduce from the monotonicity of $ \wt G_\eps$ that $\grad \wt G_\eps(\phi_k) \cdot \grad \phi_k \geq 0$, hence 
\be\label{eq:piH1.1}
|\grad (G_\eps(\phi_k) + \pi_k)|^2  \geq 
2 \eps  \grad \phi_k \cdot \grad \pi_k. 
\ee

The particular choice for the space $\Ww_k$ will be used here. Let $w_\ell$ be such that  \eqref{eq:wk} holds true, then since $\phi_k\in H^1(\Omega)$, following from the Lipschitz continuity of $\bPhi_\eps$, it holds
\[
\int_\O \grad \phi_k \cdot \grad w_\ell = \lambda_\ell \int_\O \phi_k\, w_\ell, \qquad 0 \leq \ell \leq k.
\]
Thus, exploiting the fact that $\phi_r \in H^1(\O)$ thanks to (H\ref{H.Cte}) and $\div \bu_k \in H^1(\O)$ thanks to (H\ref{H.Omega}) and more precisely to~\eqref{eq:D2uk}, we can infer that for $0 \leq \ell \leq k$,
\[
\int_\O \grad \phi_k \cdot \grad w_\ell \overset{\eqref{eq:wk}}= \lambda_\ell \int_\O \phi_k \, w_\ell \overset{\eqref{eq:constraint.k}}= 
\lambda_\ell \int_\O  (\phi_r + b\,  \div \bu_k + \theta_k)w_\ell \overset{\eqref{eq:wk}}= \int_\O  \grad (\phi_r + b \div \bu_k + \theta_k)\cdot \grad w_\ell.
\]
Therefore, 
\[
\int_\O \grad \phi_k \cdot \grad w = \int_\O  \grad (\phi_r + b \div \bu_k + \theta_k)\cdot \grad w, \qquad \forall w \in \Ww_k.
\]
In particular for $w=\pi_k$, and bearing in mind that $M\theta_k = \pi_k$, this yields
\[
\int_\O \grad \phi_k \cdot \grad \pi_k = \int_\O \left(\frac1M |\grad \pi_k|^2 + \grad (\div \bu_k) \cdot b \grad \pi_k + \grad \phi_r \cdot \grad\pi_k \right).
\]
It follows from \eqref{eq:D2uk} combined with Cauchy-Schwarz that 
\[
\int_\O  \grad (\div \clemrev{\bu_k}) \cdot \grad \pi_k \geq
 - \frac{b\,\cter{cte:mecha}}\lambda \|\pi_k\|_{H^1(\O)}^2 \clemrev{- \frac{\cter{cte:mecha}}{\lambda}\| \gravmech(\tilde \bphi) \|_{L^2(\O)^d} \|\pi_k\|_{H^1(\O)},} \]
\clemrev{so that Young's inequality provide
\begin{multline*}
\int_\O  \grad (\div \clemrev{\bu_k}) \cdot \grad \pi_k \geq -  \frac{b\,\cter{cte:mecha}}\lambda \|\pi_k\|_{H^1(\O)}^2 
\\
- \frac{1}{4b} \left(\frac1M -  \frac{b^2\cter{cte:mecha}}{\lambda}\right) 
\|\pi_k\|^2_{H^1(\O)} -  \left(\frac1M -  \frac{b^2\cter{cte:mecha}}{\lambda}\right)^{-1} b\, \|\gravmech(\tilde \bphi) \|_{L^2(\Omega)^d}^2.
\end{multline*}
Similarly, we obtain that 
}
\[
 \int_\O \grad \phi_r \cdot \grad\pi_k \geq - \frac{1}{\clemrev 4} \left(\frac1M -  \frac{b^2\cter{cte:mecha}}{\lambda}\right)
\|\pi_k\|^2_{H^1(\O)} -\left(\frac1M -  \frac{b^2\cter{cte:mecha}}{\lambda}\right)^{-1} \|\grad \phi_r \|_{L^2(\Omega)^d}^2.
\]
Therefore, using again $\pi_k = M \theta_k$, we obtain that 
\begin{align}\label{eq:gradphi-gradpi.k}
\int_\O \grad \phi_k \cdot \grad \pi_k \geq &\; \frac12 \left(\frac1M -  \frac{b^2\cter{cte:mecha}}{\lambda}\right)
\|\pi_k\|^2_{H^1(\O)} - M \| \theta_k \|_{L^2(\O)}^2  \\ 
& \qquad \nonumber
\clemrev{- \left(\frac1M -  \frac{b^2\cter{cte:mecha}}{\lambda}\right)^{-1} \left( \|\grad \phi_r \|_{L^2(\Omega)^d}^2 + b^2  \|\gravmech(\tilde \bphi) \|_{L^2(\Omega)^d}^2\right),}
\end{align}
the constant between the parentheses being strictly positive owing to  Assumption~(H\ref{H.weak-coupling}). 
Combining the above estimate with~\eqref{eq:piH1.1}, and since $\Ff_\eps(\bX_k) \geq \frac M2 \| \theta_k \|_{L^2(\O)}^2$, one gets that 
\be\label{eq:NRG.chi.k1}
\left\|\grad (G_\eps(\phi_k) + \pi_k)\right\|_{L^2(\O)^d}^2  \geq 
\cter{cte:H1.pi} \eps\left\|\pi_k \right\|_{H^1(\O)}^2 - 4 \eps \Ff_\eps(\bX_k) \clemrev{-\cter{cte:H1.pi}^{-1} \eps \left( \|\grad \phi_r \|_{L^2(\Omega)^d}^2 +b^2  \|\gravmech(\tilde \bphi) \|_{L^2(\Omega)^d}^2 \right)}.
\ee   
for $\ctel{cte:H1.pi} = \left(\frac1M -  \frac{b^2 \cter{cte:mecha}}{\lambda}\right)>0$. 
As a consequence, \eqref{eq:piH1.0} yields 
\be\label{eq:piH1.3}
\int_\O \sum_{\a \in\{n,w\}} s_{\a,k} |\grad p_{\a,k}|^2 \geq 
\cter{cte:H1.pi} \eps \left\|\pi_k \right\|_{H^1(\O)}^2  - 4 \eps \Ff_\eps(\bX_k)  \clemrev{-\cter{cte:H1.pi}^{-1} \eps \left( \|\grad \phi_r \|_{L^2(\Omega)^d}^2 +b^2  \|\gravmech(\tilde \bphi) \|_{L^2(\Omega)^d}^2 \right)},
\ee
and the last term enters an updated definition of $\cter{cte:NRG.1}$ for a uniformly bounded $\eps$. We collect \eqref{eq:Ak}, \eqref{eq:piH1.-1} and \eqref{eq:piH1.3} in \eqref{eq:ADRk} to recover~\eqref{eq:Pregdis.NRG}. 

For $\eps$ small enough so that $ \cter{cte:Pregdis.NRG.2} h \eps^2 \leq 1$, one deduces from~\eqref{eq:Pregdis.NRG} that 
\be\label{eq:NRG.tralala}
{\|\grad (G_\eps(\phi_k) + \pi_k)\|}_{L^2(\O)^d}^2 + \eps {\|\pi_k \|}^2_{H^1(\O)} \leq \frac{C}{\eps} \left(1 + \frac1{{h}} \right) \ee
for some $C$ depending neither  on $k$, $\tilde \bphi$, $\eps$ nor on  $h$ ($C$ will vary along the following lines, but will remain independent on the aforementioned parameters). Using the elementary inequality $\| a + b \|^2 \leq 2 \| a \|^2 + 2 \| b \|^2$, one further gets that 
\be\label{eq:NRG.tralala2}
\|
 \grad G_\eps(\phi_k) 
\|_{L^2(\O)^d}^2 \leq 2 \|
 \grad \left(G_\eps(\phi_k) +\pi_k \right)
\|_{L^2(\O)^d}^2  + 2 \|
\grad \pi_k
\|_{L^2(\O)^d}^2 \leq \frac{C}{\eps} \left(1 + \frac1{{h}} \right) \left(1 + \frac1\eps \right).
\ee
 Moreover, the relation 
\[
G_\eps(\phi_k) = s_{n,k} p_{n,k} + s_{w,k} p_{w,k} - \gamma_\eps(s_{n,k}) - \pi_k
\]
holds everywhere in $\O$, and in particular also on $\G^D$, where 
\[
| G_\eps(\phi_k) | \leq | p_n^D| + |p_w^D| + \gamma_\eps(1) + |\pi_k| \leq | p_n^D| + |p_w^D| + \gamma(1) + \eps + |\pi_k|. 
\]
We infer from a trace theorem that
\[
\| \pi_k \|_{L^2(\G^D)} \leq C \| \pi_k \|_{H^1(\O)}.
\]
Therefore, we get that 
\[
\| G_\eps(\phi_k) \|_{L^2(\G^D)} \leq C(1+\|\pi_k\|_{H^1(\O)}), 
\]
and thus that 
\be\label{eq:chiH1.k}
\| G_\eps(\phi_k) \|_{H^1(\O)} \leq C(1+\|\pi_k\|_{H^1(\O)} +  \| \grad G_\eps(\phi_k) \|_{L^2(\O)^d}) 
\ee
since $y \mapsto \| \grad y \|_{L^2(\O)^d} + \|y\|_{L^2(\G^D)}$ is equivalent to the usual $H^1(\O)$ norm. 
Incorporating~\eqref{eq:NRG.tralala} and \eqref{eq:NRG.tralala2} in \eqref{eq:chiH1.k} gives~\eqref{eq:NRG.chi.k}, concluding the proof of Proposition~\ref{prop:Pregdis.NRG}.
\end{proof}

\subsection{Passing to the limit $k\to+\infty$}\label{ssec:ktoinf}

This section is devoted to the proof of the following proposition, which is deduced from Propositions~\ref{prop:Pregdis} and \ref{prop:Pregdis.NRG} after letting $k$ tend to $+\infty$.

\begin{prop}\label{prop:ktoinf}
Let $\bphi^\star \in L^\infty(\O; \R_+^2)$ with $\bphi^\star \in \Kk_\bphi$ a.e. in $\O$, and let $\bu^\star \in V^d$ and $\pi^\star = M \theta^\star$ in $L^2(\O)$ be such that 
\eqref{eq:constraint.star} holds. Assume that $\cter{cte:Pregdis.NRG.2} h \eps^2 \leq 1$, 
then for any $\tilde \bphi \in L^2(\O)^2$ and $\tilde \bu \in \bU$, there exists a unique solution $(\bphi, \bp, \bu, \theta, \pi)$ with $M\theta = \pi$ and  $\bp - \bp^D \in V^2$ to the following problem:
\begin{subequations}
\be\label{eq:mecha.ktoinf}
\int_\O \bsig: \beps(\bv) = \int_\O b\,\pi \div \bv + \int_\O \gravmech(\tilde \bphi) \cdot \bv, \quad \forall \bv \in V^d, \quad \text{with}\;\; \bsig = 2\mu\, \beps(\bu) + \lambda\, \div \bu\, \bbI, 
\ee
such that $\bphi = (\phi_{n}, \phi_{w}) = \bPhi_\eps(\bp- \pi)$ satisfies
\be\label{eq:constraint.ktoinf}
\phi - b\, \div \bu - \theta = \phi_r.
\ee
and such that
\be\label{eq:ktoinf.cons}
\int_\O \frac{\phi_{\a} - \phi_\a^\star}{h} v + \int_\O \frac{k_\eps(\tilde s_\a)}{\mu_\a} \bbK(\tilde \phi) \grad \left(p_{\a} - \rho_\a \g\cdot (\x + \tilde \bu)\right) \cdot \grad v = 0 \quad \text{for all}\; v \in V, \; \a \in \{n,w\}.
\ee
\end{subequations}
Moreover, it satisfies  
\begin{multline}
\left( 1 - \cter{cte:Pregdis.NRG.2} h \eps^2 \right) \Ff_\eps(\bX) \\
+ {h}\eps \cter{cte:Pregdis.NRG}  \left( \sum_{\a \in\{n,w\}}  \left\|\grad p_{\a} \right\|_{L^2(\O)^d}^2 + \left\|\grad (G_\eps(\phi) + \pi) \right\|_{L^2(\O)^d}^2+ \eps \left\| \pi \right\|_{H^1(\O)}^2\right) \\
\leq \Ff_\eps(\bX^\star) + \int_\O \gravmech(\tilde \bphi) \cdot (\bu - \bu^\star)  \\
+ \sum_{\a \in \{n,w\}} \int_\O (\phi_{\a} - \phi_\a^\star) p_\a^D + \cter{cte:NRG.1} {h}\left(1 + \|\tilde \bu \|_{V^d}^2 +  \|\tilde \bphi \|_{L^2(\O)}^2\right)
\label{eq:ktoinf.NRG}
\end{multline}
and 
\be\label{eq:NRG.chi}
\| \pi \|_{H^1(\O)} + \|G_\eps(\phi) \|_{H^1(\O)} \leq  \cter{cte:chi}. 
\ee
\end{prop}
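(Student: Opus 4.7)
The plan is to extract convergent subsequences from the Galerkin solutions $(\bphi_k, \bp_k, \bu_k, \theta_k, \pi_k)$ provided by Proposition~\ref{prop:Pregdis}, pass to the limit in the discrete system~\eqref{eq:discretization.k} as $k \to +\infty$, and identify all nonlinear terms using the compactness inherited from the uniform bounds of Proposition~\ref{prop:Pregdis.NRG}. Throughout, $\eps$ and $h$ are fixed, so constants that degenerate as $\eps \to 0$ are harmless here.

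First I would collect the a priori estimates. Proposition~\ref{prop:Pregdis.NRG} provides bounds, uniform in $k$, on $\|\grad p_{\a,k}\|_{L^2(\O)^d}$, $\|\pi_k\|_{H^1(\O)}$, $\|G_\eps(\phi_k)\|_{H^1(\O)}$, $\|\theta_k\|_{L^2(\O)}$, and on $\|\bu_k\|_{V^d}$ via the mechanical part of $\Ff_\eps(\bX_k)$ together with Korn's inequality. Poincaré's inequality, applied to $p_{\a,k} - p_\a^D \in V$, upgrades the gradient bound to a uniform $H^1(\O)$ bound, while~\eqref{eq:D2uk} combined with the $H^1$ bound on $\pi_k$ upgrades $\|\bu_k\|_{V^d}$ to a uniform bound in $\bU$. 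Up to a subsequence, I then extract weak limits $\bp, \pi, \theta, \bu$ in the corresponding spaces, together with strong $L^2(\O)$ limits of $\pi_k$, $G_\eps(\phi_k)$, $\bu_k$ and $\div \bu_k$ by Rellich-Kondrachov. Passing weakly to the limit in $M\theta_k = \pi_k$ gives $M\theta = \pi$.

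The crucial strong convergence is that of $\bphi_k = \bPhi_\eps(\bp_k - \pi_k)$: the Lipschitz continuity of $\bPhi_\eps$ from Lemma~\ref{lem:Phi} combined with the strong $L^2$ convergence of $\bp_k - \pi_k$ (itself inherited from the compactness $H^1 \hookrightarrow L^2$) produces $\bphi_k \to \bphi := \bPhi_\eps(\bp - \pi)$ strongly in $L^2(\O)^2$. Armed with this, I would pass to the limit in each equation. For the constraint~\eqref{eq:constraint.k}, fix $w \in \Ww_{k_0}$ and send $k \to \infty$ for $k \geq k_0$, then conclude by density of $\bigcup_k \Ww_k$ in $L^2(\O)$. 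For the Darcy balance~\eqref{eq:Pregdis.cons.k}, approximate $v \in V$ by $v_k \in \Vv_k$ converging in $V$, and combine the weak convergence of $\grad p_{\a,k}$ with the strong $L^2$ convergence of $\bphi_k$; note that the coefficients $k_\eps(\tilde s_\a)\bbK(\tilde \phi)$ do not depend on $k$. For the mechanics equation, Remark~\ref{rem:mecha.k} extends~\eqref{eq:mecha.k.2} to all $\bv \in V^d$ for every $k$, so passing to the limit is immediate from the weak convergence of $\bu_k$ and $\pi_k$ and yields~\eqref{eq:mecha.ktoinf}.

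Uniqueness of the limit solution follows by replaying, in the full Hilbert space $V^2 \times V^d \times L^2(\O) \times L^2(\O)$, the strongly monotone estimate established in the proof of Proposition~\ref{prop:Pregdis}; the coercivity constants do not depend on $k$, so the argument transfers verbatim. Inequalities~\eqref{eq:ktoinf.NRG} and~\eqref{eq:NRG.chi} are inherited from~\eqref{eq:Pregdis.NRG} and~\eqref{eq:NRG.chi.k} by weak lower semicontinuity of the Hilbertian norms and continuity of the linear functionals in $\bu$ and $\bphi$ under the strong $L^2$ convergences already obtained. The main subtlety lies in passing to the limit in the nonquadratic energy contribution $\int_\O F_\eps(\bphi_k)$: since $\bphi_k$ stays in the compact set $\Kk_\bphi$ and converges strongly in $L^2$, continuity of $F_\eps$ on $\Kk_\bphi$ suffices (and weak lsc along a subsequence is enough if one prefers to keep an inequality). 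No other nonlinearity poses an obstacle thanks to the strong $L^2$ compactness of $\bphi_k$ unlocked by the Lipschitz continuity of $\bPhi_\eps$.
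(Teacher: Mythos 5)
Your proposal follows essentially the same route as the paper's proof: extract convergent subsequences from $k$-uniform bounds, identify the limit $\bphi$ via continuity of $\bPhi_\eps$, pass to the limit in the Galerkin equations by density, recover the energy inequality by weak lower semicontinuity, and get uniqueness from the strong monotonicity of $\bHh_k$ transferred to the full Hilbert space. There is, however, one small but genuine gap in your opening step. You write that Proposition~\ref{prop:Pregdis.NRG} ``provides bounds, uniform in $k$'' on the energy-related quantities, including $\|\bu_k\|_{V^d}$, and that these follow from the mechanical part of $\Ff_\eps(\bX_k)$ plus Korn's inequality. But the right-hand side of~\eqref{eq:Pregdis.NRG} contains $\int_\O \gravmech(\tilde\bphi)\cdot(\bu_k - \bu^\star)$, which itself depends on $\bu_k$; one cannot read off a $k$-uniform bound on $\Ff_\eps(\bX_k)$ directly from that inequality, and the argument as you phrase it is circular (you need the $\Ff_\eps$ bound to control $\bu_k$, but you need to control $\bu_k$ to bound the right-hand side). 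The missing ingredient is to absorb the gravity term into the coercive left-hand side via Young's and Korn's inequalities:
\[
\int_\O \gravmech(\tilde\bphi)\cdot(\bu_k - \bu^\star) \;\leq\; \tfrac12\bigl(1 - \cter{cte:Pregdis.NRG.2}h\eps^2\bigr)\Ff_\eps(\bX_k) + C\bigl(\|\bu^\star\|_{L^2(\O)^d}^2 + \|\gravmech(\tilde\bphi)\|_{L^2(\O)^d}^2\bigr),
\]
after which the uniform bound on $\Ff_\eps(\bX_k)$, and hence on $\|\bu_k\|_{V^d}$, $\|\grad p_{\a,k}\|_{L^2}$ and $\|\theta_k\|_{L^2}$, follows. (The $H^1$ bounds on $\pi_k$ and $G_\eps(\phi_k)$ are directly available from~\eqref{eq:NRG.chi.k} and are not affected.) Once this absorption is made explicit your argument is sound; your use of the Lipschitz continuity of $\bPhi_\eps$ to get strong $L^2$ convergence of $\bphi_k$ is a clean, slightly more quantitative variant of the paper's a.e.\ convergence argument via mere continuity, and either suffices to pass to the limit in the frozen-coefficient Darcy terms.
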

\begin{proof}
The a priori estimates derived in Proposition~\ref{prop:Pregdis.NRG} can be refined. Using Korn's inequality and 
\be\label{eq:aux.ktoinf.NRG}
\int_\O \gravmech(\tilde \bphi) \cdot (\bu - \bu^\star) \leq \frac12 \left( 1 - \cter{cte:Pregdis.NRG.2} h \eps^2 \right) \Ff_\eps(\bX_k) + C \left( \|\bu^\star\|_{L^2(\O)^d}^2 + \| \gravmech(\tilde \bphi) \|_{L^2(\Omega)}^2\right)
\ee
for some constant $C>0$ depending Korn's constant, Lam\'e parameters, as well as $\left( 1 - \cter{cte:Pregdis.NRG.2} h \eps^2 \right)^{-1}$, thus being uniformly bounded. Starting from~\eqref{eq:Pregdis.NRG}, the state-dependent contribution can be compensated in the Helmholtz energy $\Ff_\eps(\bX_k)$ on the left hand side of~\eqref{eq:Pregdis.NRG}. The remaining terms on the right hand side of~\eqref{eq:aux.ktoinf.NRG} are uniformly bounded in $k$, given $\tilde \bphi \in L^2(\O)^2$ and $\tilde \bu \in \bU$. Together with $\phi_{\a,k}\in \mathcal{K}_\phi$ and this results in a uniform bound in $k$.
This ensures the existence of some $\bp \in \bp^D + V^2$, and $\pi, \theta \in H^1(\O)$,  such that, up to a subsequence, 
\begin{subequations}
\begin{align}
\label{eq:pktop}
\bp_k &\underset{k\to+\infty} \longrightarrow \bp \quad \text{a.e. in $\O$ and weakly in $H^1(\O)^2$}, \\
\label{eq:piktopi}
\pi_k &\underset{k\to+\infty} \longrightarrow \pi \quad \text{a.e. in $\O$ and weakly in $H^1(\O)$}, \\
\nonumber
\theta_k &\underset{k\to+\infty} \longrightarrow \theta \quad \text{a.e. in $\O$ and weakly in $H^1(\O)$},
\end{align}
\end{subequations}
with $M\theta = \pi$. The control of the energy $\Ff_\eps(\bX)$ provides a uniform control on the $H^1(\O)^d$ norm of $\bu_k$ thanks to Korn's inequality, and even a uniform control on the $H^1(\O)^d$ norm of $\div \bu_k$ thanks to Assumption~(H\ref{H.Omega}). Therefore, there exists $\bu \in \bU$ such that 
\[
\bu_k \underset{k\to+\infty} \longrightarrow \bu \quad \text{weakly in $H^1(\O)$} \quad  \text{and} 
\quad \div \bu_k \underset{k\to+\infty} \longrightarrow \div \bu \quad \text{weakly in $H^1(\O)$}.
\]
Since $\bPhi_\eps$ is continuous owing to Lemma~\ref{lem:Phi}, one infers from~\eqref{eq:pktop} and \eqref{eq:piktopi} that 
\[
\bphi_k = \bPhi_\eps(\bp_k - \pi_k) \underset{k\to+\infty} \longrightarrow \bPhi_\eps(\bp - \pi) = \bphi \quad \text{a.e. in $\O$}, 
\]
with $\bphi \in \Kk_\bphi$ a.e. in $\O$ since $\bphi_k$ does. The aforementioned convergences are enough to pass to the limit in \eqref{eq:mecha.k}, which gives \eqref{eq:mecha.ktoinf}, and in~\eqref{eq:Pregdis.cons.k}, leading to~\eqref{eq:ktoinf.cons}. 
Thanks to~\eqref{eq:Wk.dense}, passing to the limit in~\eqref{eq:constraint.k} provides that 
\[
\int_\O (\phi - b \div \bu - \theta - \phi_r) w = 0, \qquad \forall w \in L^2(\O), 
\]
which is equivalent to claiming to \eqref{eq:constraint.ktoinf} holds in $L^2(\O)$ and thus almost everywhere and in $H^1(\O)$.
Inequality~\eqref{eq:ktoinf.NRG} is recovered from~\eqref{eq:Pregdis.NRG} by invoking the weak lower semi-continuity of the left-hand side and linearity of the right hand side. 
\end{proof}

\subsection{Unfreezing the mobility and recovering nonlinear gravitational energy}\label{ssec:Schauder}
In this section, we establish the existence of a solution to the previous problem with the additional constraints that $\bphi = \tilde \bphi$ and $\tilde{\bu} = \bu$.

\begin{prop}\label{prop:Schauder}
Let $\bphi^\star \in L^\infty(\O)^2$ with $\bphi^\star \in \Kk_\bphi$ a.e. in $\O$, and let $\bu^\star \in \bU$ and $\pi^\star = M \theta^\star$ in $L^2(\O)$ be such that~\eqref{eq:constraint.star} holds. Let $h \in \left(0, \frac{1}{\cter{cte:radius}}\right)$, with $\cter{cte:radius}$ defined below.
Then
 there exists a solution $(\bphi, \bp, \bu, \theta, \pi)$ with $\bp - \bp^D \in V^2$, $\bphi \in H^1(\O)^2$, $\bu \in \bU$, $\pi \in H^1(\O)$, $G_\eps(\phi) \in H^1(\O)$ and $M \theta = \pi$ to the following problem:
\be\label{eq:mecha.schauder}
\int_\O \bsig: \beps(\bv) = \int_\O b\,   \pi \div \bv + \int_\O \gravmech(\bphi) \cdot \bv, \quad \forall \bv \in V^d, \quad \text{with}\;\; \bsig = 2\mu\, \beps(\bu) + \lambda\, \div \bu\, \bbI, 
\ee
such that $\bphi = (\phi_{n}, \phi_{w}) = \bPhi_\eps(\bp- \pi)$ satisfies
\be\label{eq:constraint.schauder}
\phi - b\, \div \bu - \theta = \phi_r.
\ee
and such that
\be\label{eq:schauder.cons}
\int_\O \frac{\phi_{\a} - \phi_\a^\star}{h} v + \int_\O \frac{k_\eps(s_\a)}{\mu_\a} \bbK( \phi) \grad \left( p_{\a} - \rho_\a \g\cdot (\bx + \bu)\right) \cdot \grad v = 0 \quad \text{for all}\; v \in V, \; \a \in \{n,w\}.
\ee
\end{prop}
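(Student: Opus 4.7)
I would prove Proposition~\ref{prop:Schauder} via Schauder's fixed point theorem applied to the map $\mathcal{T}$ that sends the frozen data $(\tilde{\bphi}, \tilde{\bu})$ to the solution $(\bphi, \bu)$ produced by Proposition~\ref{prop:ktoinf}, with $(\eps, h, \bphi^\star, \bu^\star, \theta^\star)$ held fixed. I would work in the ambient Banach space $X = L^2(\O)^2 \times L^2(\O)^d$ and select the candidate fixed-point set
\[
C = \big\{ (\tilde{\bphi}, \tilde{\bu}) \in X : \tilde{\bphi} \in H^1(\O)^2,\ \tilde{\bphi} \in \Kk_\bphi \text{ a.e.},\ \| \tilde{\bphi} \|_{H^1(\O)^2} \leq R_1,\ \tilde{\bu} \in V^d,\ \| \tilde{\bu} \|_{V^d} \leq R_2 \big\}
\]
with $R_1=R_1(\eps,h)$ and $R_2=R_2(\eps,h)$ to be chosen. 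The set $C$ is nonempty, convex, and closed in $X$ by weak lower semicontinuity of the $H^1$ and $V^d$ norms; by Rellich--Kondrachov, $C$ is moreover compact in $X$.

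\textbf{Self-map step.} For $(\tilde{\bphi}, \tilde{\bu}) \in C$, the image $(\bphi, \bu) = \mathcal{T}(\tilde{\bphi}, \tilde{\bu})$ satisfies $\bphi \in \Kk_\bphi$ a.e. by Lemma~\ref{lem:Phi}. Combining the Lipschitz continuity of $\bPhi_\eps$ with the uniform $H^1$-controls on $\bp$ (via the dissipation in~\eqref{eq:ktoinf.NRG}) and on $\pi$ (via~\eqref{eq:NRG.chi}) provides $\| \bphi \|_{H^1(\O)^2} \leq R_1(\eps,h)$, uniformly in the input. For the bound on $\bu$, I would feed~\eqref{eq:ktoinf.NRG} through Korn's inequality applied to the elastic part of $\Ff_\eps$ to obtain a lower bound $c\|\bu\|_{V^d}^2$, while the gravitational term $\int \gravmech(\tilde{\bphi}) \cdot (\bu - \bu^\star)$ on the right-hand side is handled by Young's inequality (using the uniform $L^2$-bound on $\gravmech(\tilde{\bphi})$ provided by $\tilde{\bphi} \in \Kk_\bphi$) and partially absorbed on the left. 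The resulting estimate takes the form
\[
\| \bu \|_{V^d}^2 \leq C_0(\eps) + \cter{cte:radius}\, h \,\|\tilde{\bu}\|_{V^d}^2,
\]
where $\cter{cte:radius}$ depends only on the data. Setting $R_2^2 = C_0(\eps)/(1 - \cter{cte:radius} h)$ closes the invariance as soon as $h < 1/\cter{cte:radius}$, which matches the hypothesis.

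\textbf{Continuity step.} Given a sequence $(\tilde{\bphi}_n, \tilde{\bu}_n) \to (\tilde{\bphi}, \tilde{\bu})$ in $X$, I extract a subsequence along which $\tilde{\bphi}_n \to \tilde{\bphi}$ almost everywhere. The dominated convergence theorem then yields strong convergence in every $L^p$ of the nonlinear coefficients $k_\eps(\tilde s_{\a,n})$, $\bbK(\tilde \phi_n)$, and $\gravmech(\tilde{\bphi}_n)$, all uniformly bounded thanks to $\tilde{\bphi}_n \in \Kk_\bphi$. The corresponding outputs $(\bphi_n, \bu_n, \theta_n, \pi_n)$ are uniformly bounded in $H^1(\O)^2 \times \bU \times H^1(\O) \times H^1(\O)$, so one can extract weakly convergent subsequences, and the weak formulation~\eqref{eq:mecha.ktoinf}--\eqref{eq:ktoinf.cons} passes to the limit: the weakly convergent factor $\grad(\g \cdot \tilde{\bu}_n)$ is multiplied by strongly convergent $L^\infty$ coefficients, and the other terms pose no difficulty. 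The uniqueness part of Proposition~\ref{prop:ktoinf} identifies the limit as $\mathcal{T}(\tilde{\bphi}, \tilde{\bu})$, and the standard subsequence-subsequence argument upgrades convergence to the full sequence. Strong $L^2$-convergence of $\bphi_n$ follows from Rellich applied to its $H^1$-bound, and strong $V^d$-convergence of $\bu_n$ (a fortiori in $L^2$) follows from Lax--Milgram applied to the linear equation~\eqref{eq:mecha.ktoinf}, whose right-hand side converges strongly in $L^2$.

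\textbf{Main obstacle.} The subtle point is the self-map step: on the right-hand side of~\eqref{eq:ktoinf.NRG}, the contribution $h\,\cter{cte:NRG.1}\|\tilde{\bu}\|_{V^d}^2$ arising from the gravity potential $\g \cdot \tilde{\bu}$ inside the Darcy fluxes directly competes with the Korn--Helmholtz lower bound on $\|\bu\|_{V^d}^2$ on the left. Because the proportionality constant is inherent to the data and cannot be reduced by the $\eps$-regularization, a genuine smallness condition on $h$ is unavoidable to close the invariance. It is essential to track that $\cter{cte:radius}$ remains independent of $\eps$, so that the admissible range of time steps will not shrink when $\eps \to 0$ in the next step of the existence proof.
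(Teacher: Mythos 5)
Your proposal uses the same map $\mathcal T$, the same appeal to Proposition~\ref{prop:ktoinf} and to its uniqueness for identifying limits, and the same identification of the only genuine obstacle (the gravity-induced term $h\|\tilde\bu\|_{V^d}^2$ on the right side of \eqref{eq:ktoinf.NRG} competing with the Korn lower bound, forcing $h<1/\cter{cte:radius}$), so the core of the argument agrees with the paper's. The one real difference is in which form of Schauder you invoke. You work with a convex set $C$ that is bounded in $H^1(\O)^2\times V^d$ and hence compact in $X=L^2(\O)^2\times L^2(\O)^d$ by Rellich, and then only need $\mathcal T$ to be a continuous self-map of $C$. The paper instead works with the ball of radius $(R_\phi+R_u)^{1/2}$ in $L^2(\O)^2\times V^d$ (without requiring an $H^1$-bound on $\tilde\bphi$ in the fixed-point set), and separately proves that $\mathcal T$ is compact: it extracts weak limits from a bounded sequence of inputs, uses the frozen mechanics equation tested against $\bv=\bu_k$ to upgrade the weak $H^1$-convergence of the outputs $\bu_k$ to norm convergence in $\|\cdot\|_{\mu,\lambda}$, and deduces strong convergence. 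Your variant shifts that burden to the invariance step (you must verify a uniform $H^1$-bound on the output $\bphi$, which is available via the Lipschitz property of $\bPhi_\eps$ and the $H^1$-controls on $\bp$ and $\pi$ in \eqref{eq:ktoinf.NRG} and \eqref{eq:NRG.chi}), while simplifying the compactness/continuity bookkeeping: Rellich handles $\bphi_n$ directly, and Lax--Milgram stability for the linear mechanics equation (with right-hand side converging strongly in $L^2$) replaces the paper's norm-convergence argument for $\bu_n$. Both routes are sound; yours is a clean alternative and even slightly more elementary on the compactness side.
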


\begin{proof}
Let $\Tt: L^2(\O)^2 \times V^d \to L^2(\O)^2 \times V^d$ mapping $(\tilde \bphi, \tilde \bu)$ to $(\bphi, \bu)$ as in Proposition~\ref{prop:ktoinf}. Let the product space be equipped by the product norm $\triplenorm{\cdot}$ 
defined through
\[
\triplenorm{(\bphi, \bu )}^2:= \| \bphi \|_{L^2(\O)^2}^2 + \| \bu \|_{\mu,\lambda}^2 
\quad (\bphi,\bu)\in L^2(\O)^2 \times V^d,
\]
where we have set 
\[
 \| \bu \|_{\mu,\lambda}^2 = \int_\O 2\mu \, \beps(\bu) : \beps(\bu) + \lambda (\div \bu)^2, \qquad \forall \bu \in V^d.
\]
The norm $\|\cdot\|_{\mu,\lambda}$ is equivalent to the $H^1(\O)^d$ norm on $V^d$ thanks to Poincaré's and Korn's inequalities.

First, since $\bphi$ takes its values in the bounded set $\Kk_\bphi$ of $\R^2$, and since $\O$ is bounded, then 
$\|\bphi\|_{L^2(\O)^2}^2 \leq R_\phi$ for some $R_\phi$ not depending on $\tilde \bphi$; here, we recall the use of the orthogonal projection $\bPi$ onto $\Kk_\bphi$ to freeze the porosities, cf.~\eqref{eq:tilde.sphi}. Furthermore, from~\eqref{eq:ktoinf.NRG} we can infer that
\[
 \| \bu \|_{\mu,\lambda}^2
 \leq C\left(1 + h  \|\tilde \bu \|_{V^d}^2 + h \| \tilde \bphi \|_{L^2(\O)^2}^2\right)
 \leq \cter{cte:radius}\left(1 + h \| \tilde \bu \|_{\mu, \lambda}^2 + h R_\phi \right). 
\]
for suitable constant $C,\ctel{cte:radius}>0$ independent of $\bu$ and $\tilde{\bu}$, where the latter bound follows from Korn's inequality. Thus, assuming $\cter{cte:radius}h < 1$ and $\|\tilde \bu \|_{\mu, \lambda}^2 \leq R_u := \tfrac{\cter{cte:radius}(1+h R_\phi)}{1-\cter{cte:radius}h}$, then
\[
\triplenorm{( \bphi, \bu )}^2 \leq R_\phi + R_u.
\]
In particular, $\Tt$ maps the ball of radius $(R_\phi + R_u)^{1/2}$ of $L^2(\O)^2 \times V^d$ into itself.

Second, let us show that $\Tt$ is compact. For this, let $(\tilde \bphi_k, \tilde \bu_k)_{k\geq 1} \subset L^2(\O)^2 \times V^d$ be a bounded sequence in $L^2(\O)^2 \times H^1(\Omega)^d$. Then, up to subsequence, there exist $(\tilde \bphi^\star, \tilde \bu^\star) \subset L^2(\O)^2 \times V^d$ such that
\be\label{eq:schauder_conv.1}
 \tilde \bphi_k \underset{k \to \infty}\longrightarrow \tilde \bphi^\star \quad \text{weakly in }L^2(\O)^2,\qquad 
 \tilde \bu_k \underset{k \to \infty}\longrightarrow \tilde \bu^\star \quad \text{weakly in }H^1(\Omega)^d.
\ee
The latter implies
\be\label{eq:schauder_conv.2}
 \tilde \bu_k \underset{k \to \infty}\longrightarrow \tilde \bu^\star \quad \text{strongly in }L^2(\O)^d.
\ee
Consider the corresponding sequence $(\bphi_k,\bu_k)_{k\geq1}$ defined by $(\bphi_k,\bu_k) = \mathcal{T}(\tilde \bphi_k, \tilde\bu_k)$, accompanied with $\bX_k$ solving~\eqref{eq:mecha.ktoinf}--\eqref{eq:ktoinf.cons} and the uniform stability bound~\eqref{eq:ktoinf.NRG} with constants independent of $k$ and with $\Ff(\bX_k) \geq 0$. From~\eqref{eq:ktoinf.NRG}, we infer that, up to a subsequence,
\begin{subequations}
 \label{eq:schauder_conv.3}
\begin{alignat}{3}
 \bu_k &\underset{k \to \infty}\longrightarrow \bu^\star &\quad& \text{weakly in }H^1(\Omega)^d\text{ and strongly in }L^2(\O)^d\\
 \pi_k &\underset{k \to \infty}\longrightarrow \pi^\star &\quad& \text{weakly in }H^1(\O)\text{ and strongly in }L^2(\O)
\end{alignat}
\end{subequations}
for some $\bu^\star \in V^d$ and $\pi^\star \in H^1(\O)$. 
Due the linearity of~\eqref{eq:mecha.ktoinf}, cf.~(\ref{H.gravity}), we can consider the limit $k\rightarrow \infty$, yielding
\be\label{eq:schauder_el.1}
\int_\O 2\mu\, \beps(\bu^\star): \beps(\bv) + \lambda (\div \bu^\star)(\div \bv)= \int_\O b\,\pi^\star \div \bv + \int_\O \gravmech(\tilde \bphi^\star) \cdot \bv, \quad \forall \bv \in V^d.
\ee
In addition, testing~\eqref{eq:mecha.ktoinf} with $\bv = \bu_k$ yields
\be\label{eq:schauder_el.2}
\int_\O 2\mu \, \beps(\bu_k) : \beps(\bu_k) + \lambda (\div \bu_k)^2 = \int_\O b\,\pi_k \div \bu_k + \int_\O \gravmech(\tilde \bphi_k) \cdot \bu_k.
\ee
From the convergences~\eqref{eq:schauder_conv.1} and~\eqref{eq:schauder_conv.3}, combined in suitable pairs of strong and weak convergence, we can infer for the right hand side of \eqref{eq:schauder_el.2} that
\[
\int_\O b\,\pi_k \div \bu_k + \int_\O \gravmech(\tilde \bphi_k) \cdot \bu_k
\underset{k \to \infty}\longrightarrow
\int_\O b\,\pi^\star \div \bu^\star + \int_\O \gravmech(\tilde \bphi^\star) \cdot \bu^\star,
\]
which implies convergence of the left hand side by combining~\eqref{eq:schauder_el.1} (tested with $\bv = \bu^\star$) and~\eqref{eq:schauder_el.2}
\[
 \|\bu_k\|_{\mu,\lambda}^2 = \int_\O 2\mu \, \beps(\bu_k) : \beps(\bu_k) + \lambda (\div \bu_k)^2
 \underset{k \to \infty}\longrightarrow
 \int_\O 2\mu \, \beps(\bu^\star) : \beps(\bu^\star) + \lambda (\div \bu^\star)^2 = \|\bu^\star\|_{\mu,\lambda}^2.
\]
Norm convergence in $\|\cdot\|_{\mu,\lambda}$ together with weak convergence, implies strong convergence
\be\label{eq:schauer_conv.1}
 \bu_k \underset{k \to \infty}\longrightarrow \bu^\star \quad \text{strongly in }H^1(\Omega)^d.
\ee
Furthermore, we deduce from the fact that $\bphi_k = \bPhi_\eps(\bp_k -\pi_k)$ and from the Lipschitz continuity of $\bPhi_\eps$, cf. Lemma~\ref{lem:Phi}, that 
\[
\|\grad \bphi_k\|_{L^2(\O)^{d\times 2}} \leq \Lambda_\eps \|\grad \bp_k - \grad \pi_k\|_{L^2(\O)^{d\times 2}} \leq \Lambda_\eps \left(\|\grad \bp_k \|_{L^2(\O)^{d\times 2}} + \|\grad \pi_k\|_{L^2(\O)^{d}}\right)
\]
for some $\Lambda_\eps$ depending on $\eps$ (and the uniform bound on $(\tilde \bphi_k, \tilde \bu_k)_{k\geq 1}$). Then we infer from~\eqref{eq:ktoinf.NRG} that $\|\grad \bphi_k \|_{L^2(\O)^{d\times 2}} \leq C$ for some $C$, and we conclude
\be\label{eq:schauer_conv.2}
 \bphi_k \underset{k \to \infty}\longrightarrow \bphi^\star \quad \text{weakly in }H^1(\Omega)^d\text{ and strongly in }L^2(\O).
\ee 
Thus, finally, from~\eqref{eq:schauer_conv.1} and~\eqref{eq:schauer_conv.2}, it follows that $\Tt$ is compact.

Continuity of $\Tt$ follows along the proof of compactness. Indeed, the derived convergences are sufficient to infer respective convergence of the accompanying sequences $(\theta_k)_{k\geq 1}$ and $(s_{\a,k})_{k\geq 1}$.
Therefore, we can pass to the limit in~\eqref{eq:mecha.ktoinf}, \eqref{eq:constraint.ktoinf} and \eqref{eq:ktoinf.cons}, showing that $(\bphi^\star, \bu^\star)$ is the solution to the problem described in Proposition~\ref{prop:ktoinf} corresponding $(\tilde\bphi^\star, \tilde \bu^\star)$. From the uniqueness result stated in Proposition~\ref{prop:ktoinf}, we deduce that $(\bphi^\star, \bu^\star) = \Tt(\tilde \bphi^\star, \tilde \bu^\star)$, and the continuity of $\Tt$ follows.

The operator $\Tt$ then fulfills all the assumptions of the Schauder fixed point theorem, ensuring the existence of (at least) one fixed point for $\Tt$. This concludes the proof of Proposition~\ref{prop:Schauder}. 
\end{proof}

\section{The semi-discrete in time system without regularization}\label{sec:noreg}

Our goal in this section is to get rid of the two regularizations we incorporated in the system in Section~\ref{ssec:reg}, that are: 
\begin{enumerate}[(i)]
\item the regularization~\eqref{eq:kdelta} of the mobilities to make them non-degenerate; 
\item the regularization of the capillary energy density function $F_\eps$ introduced in Lemma~\ref{lem:Phi.2} to make it uniformly convex. 
\end{enumerate}
To this end, we first derive uniform estimates w.r.t. $\eps$, to be used to let it tend to $0$.

\subsection{Uniform estimates w.r.t. $\eps$ and $h$}\label{ssec:noreg.NRG}

Let us start from a solution $(\bphi_\eps, \bp_\eps, \bu_\eps, \theta_\eps, \pi_\eps)$ to the regularized semi-discrete system as constructed in Proposition~\ref{prop:Schauder}. We stress here the dependence of the solution in the regularization parameter $\eps>0$.

The core result of this section is the following energy estimate, which involves the increasing continuous function $\xi_\eps:[0,1] \to \R_+$ defined by 
\[
\xi_\eps(s) = \int_0^s \sqrt{a(1-a)} \gamma_\eps''(a) \d a, \qquad \forall s \in [0,1]. 
\]
\begin{prop}\label{prop:noreg.NRG}
Let $\eps \in (0,\frac14]$ and $h \in \left(0, \frac{1}{\cter{cte:radius}}\right)$, then the solution $(\bphi_\eps, \bp_\eps, \bu_\eps, \theta_\eps, \pi_\eps)$ to the semi-discrete in time and regularized problem fulfills the uniform w.r.t. $\eps$ and $h$ estimate 
\begin{multline}\label{eq:noreg.NRG}
(1 - \cter{cte:piH1.noreg.1} h) \Ff_\eps(\bX_\eps)  + \frac{K_\flat}{4 \mu^\sharp} h \bigg( 2 \| \grad \xi_\eps(s_{n,\eps}) \|^2_{L^2(\O)^d} + \cter{cte:piH1.noreg} \| \pi_\eps\|^2_{H^1(\O)} \\+   \| \grad(G_\eps(\phi_\eps) + \pi_\eps)\|^2_{L^2(\O)^d} + {\eps} \sum_{\a\in\{n,w\}} \| \grad \hat p_{\a,\eps}\|^2_{L^2(\O)^d} \bigg) \\
\leq \Ff_\eps(\bX^\star) + \int_\O \gravmech(\bphi_\eps) \cdot (\bu_\eps - \bu^\star) + \sum_{\a \in \{n,w\}} \int_\O (\phi_{\a,\eps} - \phi_\a^\star)\, p_\a^D +  \cter{cte:NRG.1} {h}, 
\end{multline}
with $\ctel{cte:piH1.noreg.1}>0$ and $\ctel{cte:piH1.noreg}>0$ depending neither on $\eps$ nor on $\bX^\star$ and nor on $h$. 
\end{prop}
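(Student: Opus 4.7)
The proof will mirror closely the derivation of the energy estimate~\eqref{eq:Pregdis.NRG} in Proposition~\ref{prop:Pregdis.NRG}, now applied at the continuous-in-space level (Proposition~\ref{prop:Schauder}), and will leverage the two new ingredients provided by the Schauder fixed point of Section~\ref{ssec:Schauder}: the solution has $\bphi_\eps,\bu_\eps$ matching what was the frozen data (thus $\tilde\bphi=\bphi_\eps$, $\tilde\bu=\bu_\eps$), and all variables live in suitable Sobolev spaces making the Kirchhoff identity usable. The plan is to test each mass balance~\eqref{eq:schauder.cons} by $h(p_{\a,\eps}-p_\a^D)$ and sum over $\a$. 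Decompose the resulting identity as $\mathcal A_\eps+\mathcal B_\eps+\mathcal D_\eps=\mathcal R_{1,\eps}+\mathcal R_{2,\eps}+\mathcal R_{3,\eps}$ exactly as in the proof of Proposition~\ref{prop:Pregdis.NRG}. Convexity of $F_\eps$ (Lemma~\ref{lem:Phi.2}) gives $\mathcal A_\eps\geq\int_\O[F_\eps(\bphi_\eps)-F_\eps(\bphi^\star)]$, and the constraint~\eqref{eq:constraint.schauder} together with the mechanics~\eqref{eq:mecha.schauder} tested with $\bu_\eps-\bu^\star$ and the identity $\pi_\eps=M\theta_\eps$ gives the mechanical energy increments plus the gravity work, producing the whole $\Ff_\eps(\bX_\eps)-\Ff_\eps(\bX^\star)-\int_\O\gravmech(\bphi_\eps)\cdot(\bu_\eps-\bu^\star)$ contribution.

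The novel feature compared with Proposition~\ref{prop:Pregdis.NRG} is the treatment of the dissipation $\mathcal D_\eps$. Using $k_\eps(s_{\a,\eps})\geq\tfrac12(s_{\a,\eps}+\eps)$ (since $\max(a,b)\geq\tfrac{a+b}{2}$), together with $\bbK(\phi_\eps)\geq K_\flat\bbI$ and $\mu_\a\leq\mu^\sharp$, splits $\mathcal D_\eps$ into a ``Kirchhoff part'' $\tfrac{hK_\flat}{2\mu^\sharp}\sum_\a\int s_{\a,\eps}|\grad p_{\a,\eps}|^2$ and an ``elliptic part'' $\tfrac{hK_\flat\eps}{2\mu^\sharp}\sum_\a\int|\grad p_{\a,\eps}|^2$. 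The Kirchhoff identities~\eqref{eq:Kirchhoff.1}--\eqref{eq:Kirchhoff.2} (with $\xi_\eps$ in place of $\xi$) then rewrite the Kirchhoff part as $\tfrac{hK_\flat}{2\mu^\sharp}[\|\grad\xi_\eps(s_{n,\eps})\|^2+\|\grad(G_\eps(\phi_\eps)+\pi_\eps)\|^2]$, as in Section~\ref{ssec:2.weak}. For the elliptic part, the inequality $|\grad\hat p_{\a,\eps}|^2\leq 2|\grad p_{\a,\eps}|^2+2|\grad(G_\eps(\phi_\eps)+\pi_\eps)|^2$ gives the desired $\tfrac{\eps}{2}\sum_\a\|\grad\hat p_{\a,\eps}\|^2$ at the cost of a surplus $2\eps\|\grad(G_\eps(\phi_\eps)+\pi_\eps)\|^2$, which is absorbed into the $\|\grad(G_\eps(\phi_\eps)+\pi_\eps)\|^2$ already controlled thanks to the restriction $\eps\leq\tfrac14$; this produces the $\tfrac{K_\flat}{4\mu^\sharp}$ prefactor visible in~\eqref{eq:noreg.NRG}.

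The genuinely delicate step is extracting the $\eps$-uniform $\|\pi_\eps\|_{H^1}^2$ bound. We follow the same chain of reasoning as for~\eqref{eq:gradphi-gradpi.k} in the proof of Proposition~\ref{prop:Pregdis.NRG}: inserting the constraint \eqref{eq:constraint.schauder} (differentiated in space) yields $\grad\phi_\eps=\grad\phi_r+b\grad(\div\bu_\eps)+M^{-1}\grad\pi_\eps$, whence
\[
\grad(G_\eps(\phi_\eps)+\pi_\eps)=\bigl(1+M^{-1}G_\eps'(\phi_\eps)\bigr)\grad\pi_\eps+G_\eps'(\phi_\eps)\bigl[\grad\phi_r+b\grad(\div\bu_\eps)\bigr].
\]
Combined with the mechanical Neumann estimate~\eqref{eq:D2uk} (whose right-hand side is now uniformly controlled because $\bphi_\eps\in\Kk_\bphi$ ensures $\|\gravmech(\bphi_\eps)\|_{L^2}$ is bounded), suitable Cauchy--Schwarz and Young inequalities produce an inequality of the shape $\|\grad\pi_\eps\|_{L^2}^2\leq\text{const}\cdot\|\grad(G_\eps(\phi_\eps)+\pi_\eps)\|_{L^2}^2+C_1\|\pi_\eps\|_{H^1}^2+C_2$, where the coefficient $C_1$ is a fixed function of $\tfrac{M b^2\cter{cte:mecha}}{\lambda}$. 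Thanks to the weak coupling assumption (H\ref{H.weak-coupling}), this coefficient lies strictly below~$1$, so that a standard absorption argument converts the inequality into a bound of $\|\grad\pi_\eps\|_{L^2}^2$ (and then, together with $\|\pi_\eps\|_{L^2}^2=M^2\|\theta_\eps\|_{L^2}^2\leq 2M\Ff_\eps(\bX_\eps)$, of the full norm $\|\pi_\eps\|_{H^1}^2$) by the dissipation, up to bounded remainders and a multiple of $\Ff_\eps(\bX_\eps)$ that is absorbed into the $(1-\cter{cte:piH1.noreg.1}h)\Ff_\eps(\bX_\eps)$ factor.

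Finally, the remainders $\mathcal R_{2,\eps}$ and $\mathcal R_{3,\eps}$ are treated by Young's inequality as in~\eqref{eq:ADRk}, using that $\bp^D\in W^{1,\infty}$, $\bphi_\eps\in\Kk_\bphi$, and $\|\bu_\eps\|_{V^d}^2$ is controlled by $\Ff_\eps(\bX_\eps)$ (Korn); the $h$-dependent surplus that would appear from $\|\tilde\bu\|_{V^d}^2+\|\tilde\bphi\|_{L^2}^2$ is again absorbed into the prefactor of $\Ff_\eps(\bX_\eps)$, defining $\cter{cte:piH1.noreg.1}$. The main obstacle is, without any surprise, the extraction of $\|\pi_\eps\|_{H^1}^2$ with an $\eps$-independent coefficient: the interplay between the cross term $G_\eps'(\phi_\eps)[\grad\phi_r+b\grad(\div\bu_\eps)]$ and $\grad\pi_\eps$ in the decomposition of $\grad(G_\eps(\phi_\eps)+\pi_\eps)$ has to be handled delicately so that weak coupling alone suffices to close the estimate.
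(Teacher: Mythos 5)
Your overall strategy tracks the paper closely: test the mass balance against $h(p_{\a,\eps}-p_\a^D)$, use convexity and the mechanics/constraint to extract $\Ff_\eps(\bX_\eps)-\Ff_\eps(\bX^\star)-\int_\O\gravmech(\bphi_\eps)\cdot(\bu_\eps-\bu^\star)$, split the dissipation via $k_\eps(s)\geq\tfrac12(s+\eps)$, reorganize the ``Kirchhoff part'' into $\|\grad\xi_\eps\|^2+\|\grad(G_\eps+\pi_\eps)\|^2$, peel off the $\eps\sum_\a\|\grad\hat p_{\a,\eps}\|^2$ term at the cost of a multiple of $\|\grad(G_\eps+\pi_\eps)\|^2$ absorbed using $\eps\leq\tfrac14$, and convert $\|\grad(G_\eps+\pi_\eps)\|^2$ into a control on $\|\pi_\eps\|_{H^1}^2$ via the constraint, the elliptic regularity estimate~\eqref{eq:D2uk}, and weak coupling. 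All of this is consistent with the paper's argument, and the gravity/reference-porosity remainder terms are correctly identified as uniformly bounded since $\bphi_\eps\in\Kk_\bphi$ and $\phi_r\in H^1(\O)$.

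The gap is precisely where you say it is: the extraction of $\|\pi_\eps\|^2_{H^1}$ with an $\eps$-uniform constant. You stop at stating the decomposition $\grad(G_\eps+\pi_\eps)=(1+M^{-1}G_\eps'(\phi_\eps))\grad\pi_\eps+G_\eps'(\phi_\eps)[\grad\phi_r+b\grad(\div\bu_\eps)]$ and asserting that ``suitable Cauchy--Schwarz and Young inequalities'' close the estimate, but that identity has $G_\eps'(\phi_\eps)$ prefactors that are \emph{unbounded} as $\eps\to0$ (and unbounded pointwise near $\phi\in\{\phi_\flat,\phi^\sharp\}$), so a naive Young on it will not produce $\eps$-independent constants. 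One way to salvage your route is to solve for $\grad\pi_\eps$ and observe the \emph{uniform} pointwise bounds $\tfrac{1}{1+M^{-1}G_\eps'}\leq 1$ and $\tfrac{G_\eps'}{1+M^{-1}G_\eps'}=\tfrac{MG_\eps'}{M+G_\eps'}\leq M$, which give $|\grad\pi_\eps|\leq|\grad(G_\eps+\pi_\eps)|+M\,|\grad(\phi_r+b\div\bu_\eps)|$ a.e., and then proceed by Young and~\eqref{eq:D2uk}; without this observation the argument is incomplete. The paper instead avoids the problem by never inverting: it expands $|\grad(G_\eps+\pi_\eps)|^2=|\grad\pi_\eps|^2+|\grad G_\eps|^2+2\grad G_\eps\cdot\grad\pi_\eps$, substitutes $\grad\pi_\eps=M\grad(\phi_\eps-b\div\bu_\eps-\phi_r)$ only inside the cross term, and \emph{drops} the nonnegative contribution $2M\,G_\eps'(\phi_\eps)|\grad\phi_\eps|^2\geq0$ by monotonicity of $G_\eps$; the leftover cross term with $\grad(\phi_r+b\div\bu_\eps)$ is then estimated by Young with the exact weight that cancels $|\grad G_\eps|^2$. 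This sign/monotonicity argument yields a cleaner coefficient (exactly $1$ in front of $\|\grad\pi_\eps\|^2$) and is the key idea you should add; your route works but gives $\tfrac{1}{1+\delta}$ instead of $1$, which is still compatible with (H\ref{H.weak-coupling}) after an extra parameter-tuning step, yet that tuning and the uniform bounds on the fractional coefficients must be stated explicitly for the proof to stand.
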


\begin{proof} The proof shares several features with the one of Proposition~\ref{prop:Pregdis.NRG}. Choosing $v = h(p_{\a,\eps} - p_\a^D)$ in~\eqref{eq:schauder.cons} 
and summing over $\a \in \{n,w\}$, then proceeding similarly as in the proof of Proposition~\ref{prop:Pregdis.NRG} gives, cf.\ in particular~\eqref{eq:ADRk}, 
\be\label{eq:ADReps}
\Aa_\eps + \Bb_\eps +  \frac12 \Dd_\eps \leq \Rr_{\eps} + \cter{cte:NRG.1} {h} (1 + \|\bu_\eps \|_{V^d}^2),
\ee
 where we have set 
 \[
\Aa_\eps = \sum_{\a \in \{n,w\}} \int_\O (\phi_{\a,\eps} - \phi_\a^\star) (p_{\a,\eps} - \pi_\eps), \qquad
\Bb_\eps = \int_\O (\phi_{\eps} - \phi^\star) \pi_\eps, 
\]
\[
\Dd_\eps = h\sum_{\a \in \{n,w\}}  \int_\O \frac{k_\eps(s_{\a,\eps})}{\mu_\a} \bbK(\phi_\eps) \grad p_{\a,\eps} \cdot \grad p_{\a,\eps} \quad\text{and}\quad
\Rr_\eps =  \sum_{\a \in \{n,w\}} \int_\O (\phi_{\a,\eps} - \phi_\a^\star)\, p_\a^D.
\]
As in the proof of Propostion~\ref{prop:Pregdis.NRG}, cf.\ also~\eqref{eq:Ak}, again utilizing the relation $\bphi_\eps = \bPhi_\eps(\bp_\eps - \pi_\eps)$ and Lemma~\ref{lem:Phi.2}, we end up with
\be\label{eq:noreg.NRG.AB}
\Aa_\eps + \Bb_\eps \geq \Ff_\eps(\bX_\eps) - \Ff_\eps(\bX^\star) - \int_\O \gravmech(\bphi_\eps)\cdot (\bu_\eps - \bu^\star).
\ee

Concerning the term $\Dd_\eps$, we start by noticing that $k_\eps(s_{\a,\eps}) \geq (s_{\a,\eps}+\eps)/2$. Then taking inspiration on what was done in Section~\ref{ssec:2.weak}, we underestimate $\Dd_\eps$ by 
\[
\Dd_\eps \geq \Dd_{1,\eps} + \Dd_{2,\eps}
\]
with 
\[
\Dd_{1,\eps}  =  \frac{K_\flat}{2 \mu^\sharp}h \eps \int_\O \sum_{\a\in\{n,w\}}  |\grad p_{\a,\eps}|^2 \quad \text{and} \quad 
\Dd_{2,\eps} =  \frac{K_\flat}{2 \mu^\sharp}h  \int_\O \sum_{\a\in\{n,w\}} s_{\a,\eps} |\grad p_{\a,\eps}|^2.
\]
Using the elementary inequality $(a+b)^2 \geq a^2/2 - b^2$ and the decomposition 
$p_{\a,\eps} = \hat p_{\a,\eps} + \pi_\eps + G_\eps(\phi_\eps)$ of the phase pressures, we get that 
\be\label{eq:noreg.D1eps}
\Dd_{1,\eps} \geq  \frac{K_\flat}{4 \mu^\sharp}h \eps \int_\O \sum_{\a\in\{n,w\}}  |\grad \hat p_{\a,\eps}|^2 
- \frac{K_\flat}{\mu^\sharp}h \eps \int_\O |\grad (G_\eps(\phi_\eps) + \pi_\eps)|^2.
\ee
Besides,
from similar arguments to those presented in Section~\ref{ssec:2.weak}, we can rewrite 
\be\label{eq:noreg.NRG.2}
 \int_\O \sum_{\a\in\{n,w\}} s_{\a,\eps} |\grad p_{\a,\eps}|^2 = \int_\O s_{n,\eps} s_{w,\eps} |\grad (p_{n,\eps} - p_{w,\eps})|^2 + \int_\O |\grad (G_\eps(\phi_\eps) + \pi_\eps)|^2. 
\ee
So for $\eps \leq 1/4$, we get that 
\begin{align}\label{eq:noreg.Deps}
\Dd_\eps \geq &\; \frac{K_\flat}{2 \mu^\sharp}h  \int_\O s_{n,\eps} s_{w,\eps} |\grad (p_{n,\eps} - p_{w,\eps})|^2 \\
& \nonumber \;+ 
 \frac{K_\flat}{4 \mu^\sharp}h \eps \int_\O \sum_{\a\in\{n,w\}}  |\grad \hat p_{\a,\eps}|^2 +  \frac{K_\flat}{4 \mu^\sharp}h\int_\O |\grad (G_\eps(\phi_\eps) + \pi_\eps)|^2. 
\end{align}
For the first term in the above right-hand side, either $p_{n,\eps} - p_{w,\eps} = \gamma'(s_{n,\eps})$ or $s_{n,\eps} s_{w,\eps} = 0$. Therefore, 
\be\label{eq:noreg.NRG.5}
 \int_\O s_{n,\eps} s_{w,\eps} |\grad (p_{n,\eps} - p_{w,\eps})|^2 = \int_\O s_{n,\eps} (1-s_{n,\eps}) |\grad \gamma_\eps'(s_{n,\eps})|^2 = \int_\O |\grad \xi_\eps(s_{n,\eps})|^2. 
 \ee
Concerning the last term in the right-hand side of~\eqref{eq:noreg.Deps}, the approach adopted in the proof of Proposition~\ref{prop:Pregdis.NRG} leads to a control depending on $\eps$, so modifications are needed. 

We can use $\pi_\eps = M\theta_\eps$,~\eqref{eq:constraint.schauder}, and the fact that 
$\phi_r\in H^1(\Omega)$ to rewrite 
\begin{align*}
 |\grad (G_\eps(\phi_\eps) + \pi_\eps)|^2 &=  |\grad \pi_\eps|^2 + |\grad G_\eps(\phi_\eps)|^2  + 2 \grad G_\eps(\phi_\eps) \cdot \grad \pi_\eps \\
 &= 
 |\grad \pi_\eps|^2 + |\grad G_\eps(\phi_\eps)|^2 + 2 M \grad G_\eps(\phi_\eps) \cdot \grad \left(\phi_\eps - b(\div \bu_\eps) - \phi_r\right).
\end{align*}
So Young's inequalityfor any $\xi_1>0$ and the monotonicty of $G_\eps$ provide that
\begin{align}\label{eq:noreg.NRG.3}
|\grad (G_\eps(\phi_\eps) + \pi_\eps)|^2 &\geq |\grad \pi_\eps|^2  - M^2 |\grad (b\div \bu_\eps + \phi_r)|^2 \\
&\geq |\grad \pi_\eps|^2  - (bM)^2 \left(1 + \xi_1 \right) |\grad (\div \bu_\eps) |^2 -  M^2\left(1 + \frac1{4\xi_1}\right)|\grad \phi_r |^2,
\nn
\end{align}
On the other hand, we infer from Assumption~(H\ref{H.Omega}), recalling~\eqref{eq:D2uk}, and applying Young's inequality for any $\xi_2>0$, that 
\[
\int_\O |\grad (\div \bu_\eps)|^2 \leq \left( \frac{b^2\,  \cter{cte:mecha}}{\lambda}\right)^2\left( 1 + \xi_2\right) \| \pi_\eps\|^2_{H^1(\O)} + 
\left(\frac{\cter{cte:mecha}}\lambda\right)^2 \left(1 + \frac1{4\xi_2} \right) \| \gravmech(\bphi_\eps) \|_{L^2(\O)^d}^2.
\]
Then we deduce from~\eqref{eq:noreg.NRG.3} that 
\begin{align*}
\|\grad (G_\eps(\phi_\eps) + \pi_\eps)\|_{L^2(\Omega)^d}^2 
\geq
\left(1 - \left(\frac{\cter{cte:mecha} Mb^2}{\lambda} \right)^2 \left( 1 + \xi_1\right)\left( 1 + \xi_2\right) \right)\|\pi_\eps\|_{H^1(\O)}^2 - M^2 \| \theta_\eps \|_{L^2(\O)}^2\\
- M^2\left(1 + \frac1{4\xi_1}\right)|\grad \phi_r |_{L^2(\O)}^2 - \left(1 + \frac1{4\xi_2}\right) \| \gravmech(\bphi_\eps) \|_{L^2(\O)^d}^2 .
\end{align*}
Under the weak coupling assumption (H\ref{H.weak-coupling}), one can choose $\xi_1>0$ and $\xi_2 \in (0,1)$ to satisfy
\[
1+\xi_1 = \frac{1}{1 - \xi_2},\quad\text{and} \quad
 \xi_2 = \frac{\lambda^2 - (M\cter{cte:mecha}b^2)^2}{\lambda^2 + 3 (M\cter{cte:mecha}b^2)^2} \in (0,1),
\]
and together with the uniform bounds on $\gravmech$ and $\phi_r$ following from (H\ref{H.Cte}) and (H\ref{H.gravity}),
one obtains
 \be\label{eq:noreg.NRG.4}
\int_\O  |\grad (G_\eps(\phi_\eps) + \pi_\eps)|^2 \geq\cter{cte:piH1.noreg} \|\pi_\eps\|^2_{H^1(\O)} - \cter{cte:piH1.noreg.1} \frac{M}2 \| \theta_\eps \|_{L^2(\O)}^2
{- \cter{cte:noreg.NRG.4}}.
\ee
with $\cter{cte:piH1.noreg}>0$, $\cter{cte:piH1.noreg.1}>0$, and $\ctel{cte:noreg.NRG.4}$ independent on $\eps$, $\bX^\star$ and $h$.
Combining \eqref{eq:noreg.NRG.AB}--\eqref{eq:noreg.NRG.5} and \eqref{eq:noreg.NRG.4} in~\eqref{eq:ADReps} and noticing again that $ M \| \theta_\eps \|_{L^2(\O)}^2 \leq 2 \Ff_\eps (\bX_\eps)$, in addition to $\|\bu_\eps\|_{V^d}^2 \leq C\Ff_\eps(\bX_\eps)$ due to Korn's inequality (for some $C>0$ for simplicity entering the definition of $\cter{cte:piH1.noreg.1}$), to be used in~\eqref{eq:ADReps}, provides the desired  estimate~\eqref{eq:noreg.NRG}.
\end{proof}

\begin{lem}\label{lem:noreg.NRG}
There exists $\ctel{cte:noreg.42}>0$ such that, for all $\eps \in (0,\frac14]$ and all $h \in (0,\frac1{\cter{cte:piH1.noreg.1}}]$ (without loss of generality, we assume from now on $\cter{cte:radius} \leq \cter{cte:piH1.noreg.1}$), there holds 
\[
{\| G_\eps(\phi_\eps) \|}_{H^1(\O)}^2 \leq \cter{cte:noreg.42} \left( 1 + \frac1h \right).
\]
\end{lem}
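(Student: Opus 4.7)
The plan is to start from the energy inequality \eqref{eq:noreg.NRG} of Proposition \ref{prop:noreg.NRG} and first extract uniform (in $\eps$ and $h$) bounds of the form
\[
\Ff_\eps(\bX_\eps) + h \|\pi_\eps\|_{H^1(\O)}^2 + h \|\grad(G_\eps(\phi_\eps) + \pi_\eps)\|_{L^2(\O)^d}^2 \leq C.
\]
To absorb the right-hand side of \eqref{eq:noreg.NRG}: the term $\Ff_\eps(\bX^\star)$ is bounded by the initial data (Assumption~(H\ref{H.init})) together with the uniform upper bound $\gamma_\eps\leq \gamma(1)+\eps$; the gravity contribution $\int_\O \gravmech(\bphi_\eps)\cdot(\bu_\eps - \bu^\star)$ is handled by the fact that $\gravmech(\bphi_\eps)$ is bounded in $L^2(\O)^d$ uniformly in $\eps$ (since $\bphi_\eps \in \Kk_\bphi$ a.e.\ and by~(H\ref{H.gravity})), Cauchy--Schwarz, Young's inequality, Korn's inequality $\|\bu_\eps\|_{V^d}^2 \leq C\Ff_\eps(\bX_\eps)$, and absorption into the leading term $(1-\cter{cte:piH1.noreg.1}h)\Ff_\eps(\bX_\eps)$ for $h$ small enough; the boundary contribution $\sum_\a \int_\O (\phi_{\a,\eps}-\phi_\a^\star)p_\a^D$ is bounded because $\phi_{\a,\eps}, \phi_\a^\star$ belong to the bounded set $\Kk_\bphi$ and $p_\a^D \in L^\infty(\O)$ by~(H\ref{H.Dirichlet}); and the remaining $\cter{cte:NRG.1}h$ is harmless for bounded $h$.

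Once these bounds are in hand, dividing by $h$ yields $\|\pi_\eps\|_{H^1(\O)}^2 \leq C/h$ and $\|\grad(G_\eps(\phi_\eps)+\pi_\eps)\|_{L^2(\O)^d}^2 \leq C/h$, from which the triangle inequality gives
\[
\|\grad G_\eps(\phi_\eps)\|_{L^2(\O)^d}^2 \leq 2 \|\grad(G_\eps(\phi_\eps)+\pi_\eps)\|_{L^2(\O)^d}^2 + 2\|\grad \pi_\eps\|_{L^2(\O)^d}^2 \leq C\left(1+\tfrac{1}{h}\right).
\]

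To control $G_\eps(\phi_\eps)$ on $\G^D$, I would then mimic the end of the proof of Proposition \ref{prop:Pregdis.NRG}: using the pointwise identity
\[
G_\eps(\phi_\eps) = s_{n,\eps} p_{n,\eps} + s_{w,\eps} p_{w,\eps} - \gamma_\eps(s_{n,\eps}) - \pi_\eps,
\]
valid everywhere in $\O$ and in particular on $\G^D$ where $p_{\a,\eps}=p_\a^D$, one gets $|G_\eps(\phi_\eps)|\leq |p_n^D|+|p_w^D| + \gamma(1) + \eps + |\pi_\eps|$ on $\G^D$. A trace theorem then yields $\|G_\eps(\phi_\eps)\|_{L^2(\G^D)}\leq C(1+\|\pi_\eps\|_{H^1(\O)})$. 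Combining with the gradient bound and the equivalence of $\|\cdot\|_{H^1(\O)}$ with $\|\grad \cdot\|_{L^2(\O)^d}+\|\cdot\|_{L^2(\G^D)}$ (available since $\G^D$ has positive measure, by~(H\ref{H.Omega})) gives
\[
\|G_\eps(\phi_\eps)\|_{H^1(\O)}^2 \leq C\left(\|\grad G_\eps(\phi_\eps)\|_{L^2(\O)^d}^2 + 1 + \|\pi_\eps\|_{H^1(\O)}^2\right) \leq \cter{cte:noreg.42}\left(1+\tfrac{1}{h}\right),
\]
which is the claim.

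The only delicate point is the absorption step: one has to make sure the gravity contribution, whose $L^2$ norm depends on $\bphi_\eps$ (via~\eqref{eq:2.grav}), can still be controlled by a constant independent of $\eps$ and $h$. This is where the uniform boundedness of $\bphi_\eps$ in $\Kk_\bphi$ (which is a consequence of $\bphi_\eps = \bPhi_\eps(\bp_\eps - \pi_\eps)$ and Lemma \ref{lem:Phi}) together with $\rho_{s,r}\in L^\infty(\O)$ and $\gravmech_\mathrm{ext}\in L^2(\O)$ is crucial; the remaining steps are all linear-algebraic manipulations and trace inequalities.
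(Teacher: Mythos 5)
Your proof follows essentially the same path as the paper's: drop the $(1-\cter{cte:piH1.noreg.1}h)\Ff_\eps(\bX_\eps)\ge 0$ term, control the gravity and boundary contributions so that the remaining right-hand side is $O(1)$, divide the $h(\dots)$ block by $h$ to obtain $\|\pi_\eps\|_{H^1}^2 + \|\grad(G_\eps(\phi_\eps)+\pi_\eps)\|_{L^2}^2 \le C(1+1/h)$, then pass from $\grad G_\eps$ to the full $H^1$ norm via the explicit boundary value of $G_\eps(\phi_\eps)$ on $\G^D$, the trace inequality, and the norm equivalence on $\G^D$ --- exactly the steps used at the end of Proposition~\ref{prop:Pregdis.NRG} and re-invoked in the paper's proof of this lemma. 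You spell out the absorption of the gravity term (Korn, Young, boundedness of $\gravmech(\bphi_\eps)$) in more detail than the paper, which only remarks that it ``can be compensated on the left hand side.'' One small imprecision: $\Ff_\eps(\bX^\star)$ is finite because $\bX^\star$ is the fixed datum of Proposition~\ref{prop:noreg.NRG}, not because of~(H\ref{H.init}) --- the dependence of the bound on $\bX^\star$ is allowed here and is only eliminated later by the Gronwall argument in Section~\ref{sec:time}. This does not affect the structure of the argument.
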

\begin{proof}
We will adapt here the program to derive~\eqref{eq:NRG.chi.k}. As $\cter{cte:piH1.noreg.1} h \leq 1$,  and since $\Ff_\eps(\bX_\eps) \geq 0$ while $\Ff_\eps(\bX^\star)$ is finite, we infer from~\eqref{eq:noreg.NRG} that 
\[
\| \pi_\eps\|^2_{H^1(\O)} +   \| \grad(G_\eps(\phi_\eps) + \pi_\eps)\|^2_{L^2(\O)^d} \leq C \left( 1 + \frac1h \right),
\]
where $C$ denotes again a generic quantity independent of $\eps$ and $h$. We note that the state-dependent terms are either uniformly bounded, or (like the $\bu_\eps$-dependent term) can be compensated on the left hand side under (\ref{H.gravity}) also affecting the value of $C$. Then updating~\eqref{eq:NRG.tralala2} leads to 
\[
\|
 \grad G_\eps(\phi_\eps) 
\|_{L^2(\O)^d}^2 \leq 2 \|
 \grad \left(G_\eps(\phi_\eps) +\pi_\eps \right)
\|_{L^2(\O)^d}^2  + 2 \|
\grad \pi_\eps
\|_{L^2(\O)^d}^2 \leq C \left(1 + \frac1{{h}} \right).
\]
Then reproducing the arguments detailed for obtaining~\eqref{eq:chiH1.k}, one gets that 
\[
\| G_\eps(\phi_\eps) \|_{H^1(\O)}^2 \leq C(1+\|\pi_\eps\|_{H^1(\O)}^2 +  \| \grad G_\eps(\phi_\eps) \|_{L^2(\O)^d}^2) \leq C\left(1+\frac1h \right), 
\]
concluding the proof of Lemma~\ref{lem:noreg.NRG}.
\end{proof}
\begin{coro}\label{coro:noreg.NRG}
There exist $\ctel{cte:NRG.nreg.1}, \ctel{cte:NRG.nreg.2}>0$ such that, for all $h \in (0,\frac1{2 \cter{cte:piH1.noreg.1}}]$ and $\eps \in (0,\frac14]$, there holds
\begin{multline*}
\Ff_\eps(\bX_\eps) 
+ \cter{cte:NRG.nreg.1} h \left( \|\xi_\eps(s_{n,\eps}) \|^2_{H^1(\O)} + \| \pi_\eps\|^2_{H^1(\O)} + \| G_\eps(\phi_\eps)\|^2_{H^1(\O)}\right)\\
 \leq  (1+2 \cter{cte:piH1.noreg.1}h) \left[\Ff_\eps(\bX^\star)  + \int_\O \gravmech(\bphi_\eps) \cdot (\bu_\eps - \bu^\star) \right] + \sum_{\a \in \{n,w\}} \int_\O (\phi_{\a, \eps} - \phi_\a^\star)\, p_\a^D+ \cter{cte:NRG.nreg.2} {h}.
\end{multline*}
\end{coro}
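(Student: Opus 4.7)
\medskip
\noindent\textbf{Proof proposal for Corollary~\ref{coro:noreg.NRG}.}

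The plan is to start from the energy estimate~\eqref{eq:noreg.NRG} of Proposition~\ref{prop:noreg.NRG} and convert it into the desired form in three moves: clear the prefactor $(1-\cter{cte:piH1.noreg.1}h)$, complete the gradient norm on $\xi_\eps(s_{n,\eps})$ into a full $H^1$ norm, and recover the $H^1$ control of $G_\eps(\phi_\eps)$ from the already-available control on $\grad(G_\eps(\phi_\eps)+\pi_\eps)$ and on $\pi_\eps$. Throughout, we restrict to $h\in(0,\tfrac{1}{2\cter{cte:piH1.noreg.1}}]$ so that $1-\cter{cte:piH1.noreg.1}h\geq \tfrac12$ and hence $\tfrac{1}{1-\cter{cte:piH1.noreg.1}h}\leq 1+2\cter{cte:piH1.noreg.1}h$. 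Dividing~\eqref{eq:noreg.NRG} by $(1-\cter{cte:piH1.noreg.1}h)$ then produces, on the left-hand side, $\Ff_\eps(\bX_\eps)$ plus a positive multiple of $h$ times $\|\grad\xi_\eps(s_{n,\eps})\|_{L^2}^2$, $\|\pi_\eps\|_{H^1}^2$, $\|\grad(G_\eps(\phi_\eps)+\pi_\eps)\|_{L^2}^2$ and $\eps\sum_\a\|\grad\hat p_{\a,\eps}\|_{L^2}^2$, while the right-hand side acquires the prefactor $1+2\cter{cte:piH1.noreg.1}h$ in front of $\Ff_\eps(\bX^\star)+\int_\O\gravmech(\bphi_\eps)\cdot(\bu_\eps-\bu^\star)$, and, after discarding a benign factor, the same form for the remaining terms.

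To promote $\|\grad\xi_\eps(s_{n,\eps})\|_{L^2}$ to $\|\xi_\eps(s_{n,\eps})\|_{H^1}$, I would observe that under Assumption~(H\ref{H.gamma}) the truncation $\gamma_\eps''$ satisfies $\gamma_\eps''\leq \eps+\gamma''$, so that
\[
\xi_\eps(s)\leq \xi(s)+\eps\int_0^1\sqrt{a(1-a)}\,\d a\leq \xi(1)+\tfrac{\eps}{4}
\]
uniformly in $s\in[0,1]$ and $\eps\in(0,\tfrac14]$. Consequently $\|\xi_\eps(s_{n,\eps})\|_{L^2(\O)}^2\leq m_\O(\xi(1)+\tfrac14)^2$ is bounded independently of $\eps$, and I can add $h$ times this quantity on both sides of the divided inequality, letting it be absorbed into a universal $\cter{cte:NRG.nreg.2}h$ on the right.

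For the $H^1$ bound on $G_\eps(\phi_\eps)$ I would reproduce the argument already used in Proposition~\ref{prop:Pregdis.NRG} and Lemma~\ref{lem:noreg.NRG}. First, the triangle inequality $\|\grad G_\eps(\phi_\eps)\|_{L^2}^2\leq 2\|\grad(G_\eps(\phi_\eps)+\pi_\eps)\|_{L^2}^2+2\|\grad\pi_\eps\|_{L^2}^2$ expresses the gradient of $G_\eps(\phi_\eps)$ in terms of quantities already present on the left-hand side. Next, the pointwise identity $G_\eps(\phi_\eps)=s_{n,\eps}p_{n,\eps}+s_{w,\eps}p_{w,\eps}-\gamma_\eps(s_{n,\eps})-\pi_\eps$ specialises on $\G^D$ to a combination of the bounded data $p_n^D,p_w^D$, of $\gamma_\eps(s_{n,\eps})\leq\gamma(1)+\eps$, and of $\pi_\eps$, yielding $\|G_\eps(\phi_\eps)\|_{L^2(\G^D)}\leq C(1+\|\pi_\eps\|_{H^1(\O)})$ by the trace theorem. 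The equivalence of the norms $v\mapsto\|\grad v\|_{L^2(\O)^d}+\|v\|_{L^2(\G^D)}$ and $v\mapsto\|v\|_{H^1(\O)}$ (a consequence of $|\G^D|_{d-1}>0$ in~(H\ref{H.Omega})) then gives $\|G_\eps(\phi_\eps)\|_{H^1(\O)}^2\leq C\bigl(1+\|\pi_\eps\|_{H^1(\O)}^2+\|\grad G_\eps(\phi_\eps)\|_{L^2}^2\bigr)$. Multiplying by $h$ and reusing the already-controlled terms provides a uniform control of $h\|G_\eps(\phi_\eps)\|_{H^1(\O)}^2$ with the same structure as the claimed inequality, at the cost again of an extra $Ch$ on the right-hand side.

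The main obstacle, which in fact does not bite in this particular corollary, is to keep track of what is absorbed where: the $L^\infty$ bound $\xi_\eps(1)\leq\xi(1)+\eps/4$ is only uniform thanks to Assumption~(H\ref{H.gamma}) and to the choice $\eps\leq 1/4$, and the trace estimate on $G_\eps(\phi_\eps)$ requires $\|\pi_\eps\|_{H^1}$ which itself is controlled only because of the weak coupling assumption~(H\ref{H.weak-coupling}) used in Proposition~\ref{prop:noreg.NRG}. Once all these ingredients are organised, choosing $\cter{cte:NRG.nreg.1}$ as a suitable minimum of $\tfrac{K_\flat}{2\mu^\sharp}$, $\tfrac{K_\flat\cter{cte:piH1.noreg}}{4\mu^\sharp}$ and the constant arising from the $H^1$-reconstruction of $G_\eps(\phi_\eps)$, and collecting the constant contributions into a single $\cter{cte:NRG.nreg.2}h$, yields the stated inequality.
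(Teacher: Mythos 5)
Your proof is correct and follows essentially the same route as the paper's: divide~\eqref{eq:noreg.NRG} by $1-\cter{cte:piH1.noreg.1}h\geq\tfrac12$, upgrade $\|\grad\xi_\eps(s_{n,\eps})\|_{L^2(\O)^d}$ to $\|\xi_\eps(s_{n,\eps})\|_{H^1(\O)}$ via the uniform $L^\infty$ bound on $\xi_\eps$ coming from~(H\ref{H.gamma}), and rebuild $\|G_\eps(\phi_\eps)\|_{H^1(\O)}$ from the already-controlled $\|\grad(G_\eps(\phi_\eps)+\pi_\eps)\|_{L^2(\O)^d}$ and $\|\pi_\eps\|_{H^1(\O)}$ via the trace argument and the norm equivalence on $H^1(\O)$. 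One numerical slip: $\int_0^1\sqrt{a(1-a)}\,\d a=\pi/8>\tfrac14$, so replace $\xi_\eps(s)\leq\xi(1)+\eps/4$ by $\xi_\eps(s)\leq\xi(1)+\eps/2$ (or $\xi(1)+\eps\pi/8$); this changes nothing. More substantively, your choice to multiply the intermediate estimate $\|G_\eps(\phi_\eps)\|_{H^1(\O)}^2\lesssim 1+\|\pi_\eps\|_{H^1(\O)}^2+\|\grad(G_\eps(\phi_\eps)+\pi_\eps)\|_{L^2(\O)^d}^2$ by $h$ and absorb it on the left is the correct reading of the paper's terse phrase ``combining~\eqref{eq:noreg.NRG++} with Lemma~\ref{lem:noreg.NRG}'': taking only that lemma's final statement $\|G_\eps(\phi_\eps)\|_{H^1(\O)}^2\leq\cter{cte:noreg.42}(1+1/h)$ and multiplying by $h$ would leave an $O(1)$ remainder rather than the required $O(h)$, which would wreck the Gronwall-type summation carried out in Section~\ref{sec:time}. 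So your version is not merely equivalent but actually supplies a detail the paper leaves implicit.
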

\begin{proof}
Dividing~\eqref{eq:noreg.NRG} by $1- \cter{cte:piH1.noreg.1}h$, then the elementary inequalities $2 \geq 1+2  \cter{cte:piH1.noreg.1}h \geq (1- \cter{cte:piH1.noreg.1} h)^{-1} \geq 1$ (recall here that we assumed $2 \cter{cte:piH1.noreg.1}h \leq 1$) provide
\begin{multline*}
 \Ff_\eps(\bX_\eps) + C h \left( \|\pi_\eps\|^2_{H^1(\O)} + \|\grad \xi_\eps(s_{n,\eps})\|_{L^2(\O)^d}^2\right) \\
 \leq (1+2  \cter{cte:piH1.noreg.1}h)\left[\Ff_\eps(\bX^\star) 
 +  \int_\O \gravmech(\bphi_\eps) \cdot (\bu_\eps - \bu^\star)
 +\sum_{\a \in \{n,w\}} \int_\O (\phi_{\a,\eps} - \phi_\a^\star)\, p_\a^D \right]
 + \cter{cte:NRG.1} {h}. 
\end{multline*}
Since $\bphi_\eps, \bphi^\star \in \Kk_\bphi$ and since $\bp^D \in L^\infty(\O)^2$ too, 
\[
 \sum_{\a \in \{n,w\}} \int_\O (\phi_{\a,\eps} - \phi_\a^\star)\, p_\a^D \leq C.
\]
Moreover, it follows from (H\ref{H.gamma}) that the function $\xi_\eps$ is uniformly bounded on $[0,1]$, hence we get 
\begin{multline}\label{eq:noreg.NRG++}
\Ff_\eps(\bX_\eps) + C h \left( \|\pi_\eps\|^2_{H^1(\O)} + \|\xi_\eps(s_{n,\eps})\|_{H^1(\O)}^2\right) \\
 \leq (1+2  \cter{cte:piH1.noreg.1}h) \left[\Ff_\eps(\bX^\star) + \int_\O \gravmech(\bphi_\eps) \cdot (\bu_\eps - \bu^\star) \right]
 \sum_{\a \in \{n,w\}} \int_\O (\phi_{\a,\eps} - \phi_\a^\star)\, p_\a^D
 + C' {h}
 \end{multline}
 for some $C'$ also uniform in $\eps$ and $h$. 
 Combining~\eqref{eq:noreg.NRG++} with Lemma~\ref{lem:noreg.NRG} provides the desired result.
\end{proof}

\subsection{Passing to the limit $\eps \to 0$}
Let us get rid of the regularization in the mobilities and in the capillary energy density by passing to the limit $\eps \to 0$ while keeping $h>0$ fixed. The main result of the current section is the following.

\begin{prop}\label{prop:noreg.lim}
Let $h \in (0,1]$, then there exists $\bX = (\bphi, \bu, \theta, \pi)$ with $\bphi \in \Kk_\bphi$ a.e. in $\O$, $\phi \in H^1(\O)$ and $\xi(s_n) - \xi(s_n^D) \in V$, with $\bu \in \bU$, and $\theta, \pi \in H^1(\O)$, as well as some $\chi \in H^1(\O)$ with $\chi \in \ov \chi(\phi)$ such that 
\be\label{eq:noreg.lim.0}
\pi = M\theta \quad \text{and}\quad \phi - b \, \div \bu - \theta = \phi_r, 
\ee
such that 
\be\label{eq:noreg.lim.cons.n}
\int_\O \frac{\phi_n - \phi_n^\star}h v - \int_\O \frac{1}{\mu_n} \bbK(\phi) \left( \grad \psi(s_n) + s_n \grad \left((\pi +\chi) -\rho_n \g \cdot (\bx + \bu) \right)\right) \cdot \grad v = 0, \qquad \forall v \in V, 
\ee
\be\label{eq:noreg.lim.cons.w}
\int_\O \frac{\phi_w - \phi_w^\star}h v - \int_\O \frac{1}{\mu_w} \bbK(\phi) \left( - \grad \psi(s_n) + s_w \grad \left((\pi+\chi) -\rho_w \g \cdot (\bx + \bu) \right) \right) \cdot \grad v = 0, \qquad \forall v \in V, 
\ee
 and
\be\label{eq:noreg.lim.mecha} 
\int_\O \left( 2 \mu \, \beps(\bu) : \beps(\bv) + \lambda (\div \bu)(\div \bv)\right) = \int_\O b \pi \, \div\bv +\int_\O \gravmech(\bphi) \cdot \bv, \qquad \forall \bv \in V^d.
\ee
Moreover, $\bX$ and $\chi$ satisfy
\begin{multline}\label{eq:noreg.lim.NRG}
\Ff(\bX) 
+ \cter{cte:NRG.nreg.1} h \left( \|\xi(s_{n}) \|^2_{H^1(\O)} + \| \pi\|^2_{H^1(\O)} + \| \chi\|^2_{H^1(\O)}\right) \\
\leq  (1+2  \cter{cte:piH1.noreg.1} h) \left[\Ff(\bX^\star) + \int_\O \gravmech(\bphi) \cdot (\bu - \bu^\star)\right]  + \sum_{\a \in \{n,w\}} \int_\O (\phi_{\a} - \phi_\a^\star)\, p_\a^D + \cter{cte:NRG.nreg.2} {h}.
\end{multline}
\end{prop}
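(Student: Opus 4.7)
The plan is to exploit the uniform-in-$\eps$ estimates of Corollary~\ref{coro:noreg.NRG} and Assumption~(H\ref{H.Omega}) to extract a weakly convergent subsequence of $(\bphi_\eps, \bu_\eps, \theta_\eps, \pi_\eps)$, identify the limits by strong compactness and monotonicity arguments, and then pass to the limit in the equations~\eqref{eq:mecha.schauder}--\eqref{eq:schauder.cons} and in the energy estimate. At fixed $h$, Corollary~\ref{coro:noreg.NRG} gives uniform bounds on $\Ff_\eps(\bX_\eps)$, $\|\xi_\eps(s_{n,\eps})\|_{H^1(\O)}$, $\|\pi_\eps\|_{H^1(\O)}$ and $\|G_\eps(\phi_\eps)\|_{H^1(\O)}$; the energy bound and Korn's inequality give $\bphi_\eps \in L^\infty(\O;\Kk_\bphi)$, $\bu_\eps$ bounded in $V^d$, and $\theta_\eps$ bounded in $L^2(\O)$. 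I would then invoke (H\ref{H.Omega}) applied to \eqref{eq:mecha.schauder}, as in~\eqref{eq:D2uk}, to control $\grad \div \bu_\eps$ uniformly by $\|\pi_\eps\|_{H^1(\O)}$ and $\|\gravmech(\bphi_\eps)\|_{L^2(\O)}$, so that $\bu_\eps$ is bounded in $\bU$. From~\eqref{eq:constraint.schauder} together with $\pi_\eps = M\theta_\eps$, the identity $\phi_\eps = \phi_r + b \div \bu_\eps + \theta_\eps$ produces an $H^1(\O)$ bound on $\phi_\eps$. Up to a subsequence, Rellich's theorem then provides weak $H^1$ and strong $L^2$ (and a.e.) convergence of $\phi_\eps, \pi_\eps, \theta_\eps, G_\eps(\phi_\eps), \xi_\eps(s_{n,\eps}), \div \bu_\eps$, along with weak $V^d$ convergence of $\bu_\eps$, towards limits $\phi, \pi, \theta, \chi \in H^1(\O)$ and $\bu \in \bU$ with $\pi = M\theta$ and $\phi = \phi_r + b \div \bu + \theta$.

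Next I would identify the nonlinear limits. Since $\phi_\eps \geq \phi_\flat > 0$ and $\phi_{n,\eps}$ is $L^\infty$-bounded, after a further extraction $s_{n,\eps} = \phi_{n,\eps}/\phi_\eps \to s_n := \phi_n/\phi$ almost everywhere. The uniform convergence $\xi_\eps \to \xi$ on $[0,1]$ identifies the weak $H^1$ limit of $\xi_\eps(s_{n,\eps})$ as $\xi(s_n)$; the membership $\xi(s_n) - \xi(s_n^D) \in V$ follows from the Dirichlet condition~\eqref{eq:BC.phi.D} and the Lipschitz continuity of $\xi \circ \Ss$. For the Lagrange multiplier, strong $L^2$ convergence $G_\eps(\phi_\eps) \to \chi$ combined with a.e.\ convergence along a subsequence permits a standard monotonicity argument: on $\{\phi_\flat < \phi < \phi^\sharp\}$, the uniform-on-compacts convergence $G_\eps \to 0$ (Dini, from~\eqref{eq:Gbeta.pointwise}) forces $\chi = 0$; on $\{\phi = \phi_\flat\}$, the monotonicity of $G_\eps$ and pointwise convergence $G_\eps(z) \to 0$ for each $z \in (\phi_\flat,\phi^\sharp)$ yield $\chi \leq 0$, and symmetrically $\chi \geq 0$ on $\{\phi = \phi^\sharp\}$. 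Hence $\chi \in \ov\chi(\phi)$ almost everywhere.

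Passing to the limit in~\eqref{eq:mecha.schauder} is straightforward using linearity in $(\bu_\eps, \pi_\eps)$, weak convergence of $\beps(\bu_\eps), \div \bu_\eps, \pi_\eps$, and strong $L^2$ convergence of $\bphi_\eps$ (so that $\gravmech(\bphi_\eps) \to \gravmech(\bphi)$ in $L^2$). For the fluid equations~\eqref{eq:schauder.cons}, I would use the decomposition $p_{\a,\eps} = \hat p_{\a,\eps} + \pi_\eps + G_\eps(\phi_\eps)$ together with the regularized counterpart of~\eqref{eq:Kirchhoff.1} in terms of $\psi_\eps$ to rewrite
\[
\tfrac{k_\eps(s_{\a,\eps})}{\mu_\a} \bbK(\phi_\eps)\grad p_{\a,\eps} = \pm\tfrac{1}{\mu_\a}\bbK(\phi_\eps)\grad \psi_\eps(s_{n,\eps}) + \tfrac{s_{\a,\eps}}{\mu_\a}\bbK(\phi_\eps)\grad\bigl(\pi_\eps + G_\eps(\phi_\eps)\bigr) + R_{\a,\eps},
\]
with sign $+$ for $\a = n$ and $-$ for $\a = w$, and residual $R_{\a,\eps} = \tfrac{k_\eps(s_{\a,\eps}) - s_{\a,\eps}}{\mu_\a}\bbK(\phi_\eps)\grad p_{\a,\eps}$. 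The bound $0 \leq k_\eps(s_{\a,\eps}) - s_{\a,\eps} \leq \eps$ and the $\sqrt\eps$-weighted estimate $\eps \sum_\a \|\grad \hat p_{\a,\eps}\|_{L^2(\O)^d}^2 \leq C$ from Proposition~\ref{prop:noreg.NRG} make $R_{\a,\eps}$ vanish in $L^1$. Uniform convergence $\psi_\eps \to \psi$, strong $L^2$ convergence of $s_{\a,\eps}, \phi_\eps, \bu_\eps$, and weak convergence of $\grad \psi_\eps(s_{n,\eps}), \grad \pi_\eps, \grad G_\eps(\phi_\eps)$ then yield~\eqref{eq:noreg.lim.cons.n}--\eqref{eq:noreg.lim.cons.w}; the gravity contributions pass similarly.

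Finally, I would obtain~\eqref{eq:noreg.lim.NRG} by taking $\liminf_{\eps \to 0}$ in Corollary~\ref{coro:noreg.NRG}: weak lower semicontinuity handles the quadratic terms on the left, uniform convergence $\gamma_\eps \to \gamma$ handles the capillary energy, and the fact that $\chi \in \ov\chi(\phi)$ guarantees $\phi \in [\phi_\flat, \phi^\sharp]$ a.e.\ so that $\int \ind_{[\phi_\flat,\phi^\sharp]}(\phi) = 0$, identifying the limit of $\int F_\eps(\bphi_\eps)$ with $\int F(\bphi)$; the right-hand side passes by strong $L^2$ convergence of $\bphi_\eps$ and weak convergence of $\bu_\eps$. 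The main obstacle will be the fluid-equation limit: the degenerate mobility $k_\eps(s_{\a,\eps})$ and the penalty $G_\eps(\phi_\eps)$ both generate terms whose gradients are not strongly convergent, and only the Kirchhoff transformation (exchanging the uncontrolled $s_\a \grad \hat p_\a$ for $\grad \psi(s_n)$), combined with the delicate $\eps$-weighted control on $\grad \hat p_{\a,\eps}$ inherited from the regularized level, allows the flux to be identified in the limit.
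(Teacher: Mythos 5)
Your plan matches the paper's proof architecture quite closely: extract uniform-in-$\eps$ bounds from Corollary~\ref{coro:noreg.NRG} and Assumption~(H\ref{H.Omega}), use Rellich and weak compactness to obtain limits, identify $\chi \in \ov\chi(\phi)$ by pointwise monotonicity of $G_\eps$ on the three sets $\{\phi = \phi_\flat\}$, $\{\phi_\flat<\phi<\phi^\sharp\}$, $\{\phi = \phi^\sharp\}$, reformulate the fluid flux via a Kirchhoff-type decomposition, and close with lower semicontinuity.

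There is, however, one genuine gap in the identification of the nonlinear limits. You write ``Since $\phi_\eps \geq \phi_\flat>0$ and $\phi_{n,\eps}$ is $L^\infty$-bounded, after a further extraction $s_{n,\eps} = \phi_{n,\eps}/\phi_\eps \to s_n := \phi_n/\phi$ almost everywhere.'' That does not follow: an $L^\infty$ bound on $\phi_{n,\eps}$ yields only $L^\infty$-weak-$\star$ convergence and never almost-everywhere convergence, and dividing by the strongly convergent $\phi_\eps$ does not upgrade it. The a.e.\ convergence of $s_{n,\eps}$ must be extracted \emph{from} the $H^1(\O)$ bound on $\xi_\eps(s_{n,\eps})$, not independently of it: Rellich gives $\xi_\eps(s_{n,\eps}) \to \Xi$ a.e.; the uniform convergence $\xi_\eps \to \xi$ upgrades this to $\xi(s_{n,\eps}) \to \Xi$ a.e.; continuity of $\xi^{-1}$ (as $\xi$ is strictly increasing) gives $s_{n,\eps} \to \xi^{-1}(\Xi)$ a.e.; and finally the a.e.\ limit must coincide with the weak-$\star$ limit, so $\xi^{-1}(\Xi)=\phi_n/\phi=s_n$ and $\Xi = \xi(s_n)$. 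This is precisely the role of the monotonicity argument in the paper (citing~\cite{ACM17}). Only at that point can you conclude $\phi_{n,\eps} = s_{n,\eps}\phi_\eps \to s_n\phi = \phi_n$ a.e.\ and hence strongly in $L^2(\O)$, which you implicitly rely on to pass to the limit in the gravity terms and the capillary energy. You have the right ingredients, but the stated logical order is reversed and the stated justification for the a.e.\ convergence of $s_{n,\eps}$ is incorrect as written.

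Your flux decomposition is a valid and arguably more transparent variant of the paper's. You split $k_\eps(s_{\a,\eps}) = s_{\a,\eps} + \bigl(k_\eps(s_{\a,\eps}) - s_{\a,\eps}\bigr)$, use the regularized identity $s_{n,\eps}\grad\hat p_{n,\eps} = \grad\bigl(\int_0^{s_{n,\eps}} z(1-z)\gamma_\eps''(z)\,\d z\bigr)$, and estimate the residual $R_{\a,\eps}=(k_\eps(s_{\a,\eps})-s_{\a,\eps})\bbK(\phi_\eps)\grad p_{\a,\eps}/\mu_\a$ directly from $|k_\eps(s)-s| \leq \eps$ and the $\sqrt\eps$-weighted bound on $\grad\hat p_{\a,\eps}$ from Proposition~\ref{prop:noreg.NRG}. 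The paper instead absorbs $k_\eps$ into the Kirchhoff function $\hat\psi_\eps$ built from the flux law and estimates the mismatch $\psi_\eps(s_{n,\eps}) - \hat\psi_\eps(\hat p_{n,\eps})$, which vanishes on $\{s_{n,\eps}>0\}$; both routes use the same $\sqrt\eps$-weighted control. One small imprecision: you should request the residual to vanish in $L^2(\O)^d$ rather than in $L^1$, since a general $v \in V$ only has $\grad v \in L^2(\O)^d$; the estimate $\|R_{\a,\eps}\|_{L^2(\O)^d} \leq C\,\eps\,\|\grad p_{\a,\eps}\|_{L^2(\O)^d} \leq C\sqrt\eps$ delivers exactly this.
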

\begin{proof}
Let $\left(\eps_k\right)_{k\geq 0}\subset(0,\frac14]$ be a sequence of positive regularization parameters tending to $0$ as $k$ tend to $+\infty$, then we investigate, thanks to compactness arguments, the behavior when $k$ goes to $\infty$ of the sequence $\left(\bX_{\eps_k}\right)_{k\geq 0}$ provided by Proposition~\ref{prop:Schauder}. 
For the ease of reading, we will omit the subscript $k$, and we will denote by $\eps \to 0$ the limit $k\to +\infty$. 

We deduce from Corollary~\ref{coro:noreg.NRG} that there exists $C$ not depending on $\eps$ such that 
\[
\| \xi_\eps(s_{n,\eps})\|_{H^1(\O)} \leq C, \qquad \|G_\eps(\phi_\eps)\|_{H^1(\O)} \leq C, 
\]
\[
\| \pi_\eps\|_{H^1(\O)} \leq C, \qquad \|\theta_\eps\|_{H^1(\O)} \leq C, \qquad 
\| \bu_\eps\|_{\bU} \leq C, 
\]
while $\bphi_\eps \in \Kk_\bphi$ and $0 \leq s_{\a,\eps} \leq 1$ holds true for all $\eps>0$. Here, we employ the same arguments as in the proof of Lemma~\ref{lem:noreg.NRG} and Corollary~\ref{coro:noreg.NRG}, regarding state-dependent terms on the right hand side. 
Then we infer from~\eqref{eq:constraint.schauder} that 
\[
\|\phi_\eps\|_{H^1(\O)} \leq C.
\]
As a consequence, there exists $\Xi, \pi, \theta, \phi$ and $\chi$ in $H^1(\O)$ such that, up to a subsequence, 
\be\label{eq:noreg.convH1w}
 \xi_\eps(s_{n,\eps}) \underset{\eps \to 0}\longrightarrow \Xi, \qquad 
  \pi_\eps \underset{\eps \to 0}\longrightarrow \pi, \qquad 
    \theta_\eps \underset{\eps \to 0}\longrightarrow \theta, \qquad 
   \phi_\eps \underset{\eps \to 0}\longrightarrow \phi \quad \text{and}\quad 
G_\eps(\phi_{\eps})   \underset{\eps \to 0}\longrightarrow \chi, 
\ee
as well as $\bu\in \bU$ such that, up to a subsequence,
\be\label{eq:noreg.convH1w.u}
\bu_\eps  \underset{\eps \to 0}\longrightarrow \bu, \qquad 
  \div \bu_\eps \underset{\eps \to 0}\longrightarrow \div \bu,
\ee
the above convergences holding almost everywhere in $\O$ as well as in the weak $H^1(\O)$ sense. One further readily deduces from the uniform convergence of $\xi_\eps$ towards $\xi$ that $\xi_\eps(s_n^D)$ converges weakly in $H^1(\O)$ towards $\xi(s_n^D)$, and that 
$\Xi - \xi(s_n^D) \in V$. 
Moreover, since $\bphi_\eps \in \Kk_\bphi$ and $0 \leq s_{\a,\eps} \leq 1$ for all $\eps>0$, 
there exists $\bphi \in L^\infty(\O)^2$ with $\bphi \in \Kk_\bphi$ a.e. in $\O$ and $s_{\a} \in L^\infty(\O)$ with $0 \leq s_n \leq 1$ and $s_w = 1 - s_n$  such that 
\be\label{eq:noreg.convbphi}
\bphi_\eps\underset{\eps \to 0}\longrightarrow \bphi \quad \text{and} \quad s_{\a,\eps} \underset{\eps \to 0}\longrightarrow s_\a
\ee
in the $L^\infty(\O)^2$- and $L^\infty(\O)$-weak-$\star$ senses respectively. 
Denote by 
\[
\O_\flat = \{x\in\O \; | \; \phi(x) = \phi_\flat\}, \qquad \O^\sharp =  \{x\in\O \; | \; \phi(x) = \phi^\sharp\} \quad \text{and}\quad 
\Oo = \O \setminus(\O_\flat \cup \O_\sharp), 
\]
and let $x \in \Oo$ such that $\phi_\eps(x) \to \phi(x)$ and $G_\eps(\phi_\eps(x)) \to \chi(x)$ as $\eps \to 0$. Then for all $\eta>0$,  for $\eps$ smaller than some $\eps^\star$ depending on $\eta$ and on $x$, $\phi_\eps(x) \in (\phi_\flat + \eta, \phi^\sharp-\eta)$. 
As a consequence, 
\[
G_\eps(\phi_\flat + \eta) \leq G_\eps(\phi_\eps(x)) \leq G_\eps(\phi^\sharp - \eta),  
\]
and $G_\eps(\phi_\eps(x))$ goes to $0=\chi(x)$ thanks to~\eqref{eq:Gbeta.pointwise}. On the other hand, for $x \in \O_\flat$ such that 
$\phi_\eps(x) \to \phi_\flat$ and $G_\eps(\phi_\eps(x)) \to \chi(x)$, then for $\eps$ small enough, $\phi_\eps(x) \leq \frac{\phi_\flat + \phi^\sharp}2$, so that $G_\eps(x) \leq 0$. Therefore, $\chi(x) \leq 0$. Similarly, $\chi(x) \geq 0$ for almost all $x \in \O^\sharp$. Finally, we get that 
$\chi \in \ov \chi(\phi)$ a.e. in $\O$. 

Let us now show that $\Xi = \xi(s_n)$ and that the convergence~\eqref{eq:noreg.convbphi} holds almost everywhere in $\O$. Let $s \in L^\infty(\O)$ with $0 \leq s \leq 1$ be arbitrary, then we infer from the aforementioned convergences that 
\[
(\xi_\eps(s_{n,\eps}) - \xi(s)) (s_{n,\eps} - s) \underset{\eps \to 0}\longrightarrow (\Xi - \xi(s)) (s_n - s) 
\quad \text{weakly in }L^2(\O). 
\]
On the other hand, one has 
\[
(\xi_\eps(s_{n,\eps}) - \xi(s)) (s_{n,\eps} - s) = (\xi_\eps(s_{n,\eps}) - \xi(s_{n,\eps})) (s_{n,\eps} - s) + 
(\xi(s_{n,\eps}) - \xi(s)) (s_{n,\eps} - s). 
\]
The first term in the above right-hand side tends uniformly to $0$ because of the uniform convergence of $\xi_\eps$ towards $\xi$ which can be deduced from Assumption~(H\ref{H.gamma}) on $\gamma$ and of the uniform boundedness of $s_{n,\eps}$, whereas the second term is non-negative because $\xi$ is non-decreasing. Therefore, we obtain that 
\[(\Xi - \xi(s)) (s_n - s) \geq 0\quad \text{a.e. in $\O$ for any $s\in [0,1]$}.\] 
This implies that $\Xi = \xi(s_n)$ (see for instance \cite{ACM17}). Next, 
\[
|\xi(s_{n,\eps}) - \xi(s_n)| \leq |\xi(s_{n,\eps}) - \xi_\eps(s_{n,\eps})| + |\xi_\eps(s_{n,\eps}) - \xi(s_n)|. 
\]
The uniform convergence of $\xi_\eps$ towards $\xi$ implies that the first term in the right-hand side goes to $0$ with $\eps$ uniformly on $\O$. The second term converges to $0$ almost everywhere owing to \eqref{eq:noreg.convH1w}. As a consequence, $\xi(s_{n,\eps})$ tends almost everywhere to $\xi(s_n)$, and since $\xi^{-1}$ is continuous, one gets that 
\be\label{eq:noreg.convsn}
s_{n,\eps} \underset{\eps \to 0}\longrightarrow  s_n \quad \text{a.e. in $\O$}. 
\ee
Bearing in mind the almost everywhere convergence of $\phi_\eps$ towards $\phi$, then 
\[
\phi_{n,\eps} = s_{n,\eps} \phi_\eps \underset{\eps \to 0}\longrightarrow s_n \phi = \phi_n \quad \text{a.e. in $\O$}. 
\]
 
The convergence properties \eqref{eq:noreg.convH1w}, \eqref{eq:noreg.convH1w.u} and \eqref{eq:noreg.convbphi} are enough to pass to the limit in the linear relations to recover~\eqref{eq:noreg.lim.0} and \eqref{eq:noreg.lim.mecha}. 
Moreover, the energy estimate~\eqref{eq:noreg.lim.NRG} holds true thanks to Corollary~\ref{coro:noreg.NRG}, to the lower semi-continuity of 
$\Ff$ (see for instance \cite[Lemma~C.6]{kangourou} for the capillary energy part) and the lower-semi continuity of the squared norms for the weak convergences. So our last focus is on the proof of~\eqref{eq:noreg.lim.cons.n}, obtaining~\eqref{eq:noreg.lim.cons.w} being similar.

Starting from~\eqref{eq:schauder.cons}, one has that 
\be\label{eq:noreg.cons.eps}
\int_\O \frac{\phi_{n,\eps} - \phi_n^\star}{h} v + \int_\O \frac{k_\eps(s_{n,\eps})}{\mu_n} \bbK( \phi_\eps) \grad \left((\hat p_{n,\eps} + \pi_\eps + G_\eps(\phi_\eps)) - \rho_n \g \cdot (\bx + \bu_\eps)\right) \cdot \grad v = 0 \quad \text{for all}\; v \in V.
\ee 
Fix $v \in V$, then because of~\eqref{eq:noreg.convbphi} we readily get that 
\be\label{eq:noreg.cons.1}
\int_\O \frac{\phi_{n,\eps} - \phi_n^\star}{h} v\;  \underset{\eps \to 0}\longrightarrow \; \int_\O \frac{\phi_{n} - \phi_n^\star}{h} v.
\ee
One also easily shows, thanks to Assumption~(H\ref{H.K}), to~\eqref{eq:noreg.convH1w}, \eqref{eq:noreg.convsn} and  Lebesgue's dominated convergence theorem that $k_\eps(s_{n,\eps}) \bbK(\phi_\eps)\grad v$ converges (strongly) in $L^2(\O)^d$ towards $s_n \bbK(\phi)\grad v$.  Then we infer from~\eqref{eq:noreg.convH1w} that 
\begin{align}
\label{eq:noreg.cons.2}
\int_\O \frac{k_\eps(s_{n,\eps})}{\mu_n} \bbK( \phi_\eps) \grad \left( (\pi_\eps + G_\eps(\phi_\eps)) -\rho_n \g \cdot (\bx  + \bu_\eps)\right) \cdot \grad v \;  \\
\underset{\eps \to 0}\longrightarrow \; \int_\O \frac{s_n}{\mu_n} \bbK(\phi) \grad \left((\pi + \chi) -\rho_n \g \cdot (\bx + \bu) \right) \cdot \grad v. \nn
\end{align}
The last term to be studied is 
\[
\Jj_\eps = \int_\O \frac{k_\eps(s_{n,\eps})}{\mu_n} \bbK( \phi_\eps) \grad \hat p_{n,\eps} \cdot \grad v.
\]
Recall that $\hat p_{n,\eps} \in \ov p_{n,\eps}(s_{n,\eps})$, with 
\[
\ov p_{n,\eps}(s) = \begin{cases}
(-\infty, \gamma_\eps(0) + \gamma_\eps'(0)] & \text{if}\; s = 0, \\
f_\eps(s) := \gamma_\eps(s) + (1-s) \gamma_\eps'(s) & \text{if}\; s \in (0,1], 
\end{cases}
\]
the function $f_\eps$ being continuous and increasing. Therefore, $s_{n,\eps}$ is a continuous function of $\hat p_{n,\eps}$, i.e. 
$s_{n,\eps} = \hat \Ss_\eps(\hat p_{n,\eps})$ with 
\[
\hat \Ss_\eps(\hat p) = \begin{cases}
0 & \text{if}\; \hat p \leq \gamma_\eps(0) + \gamma_\eps'(0), \\
f_\eps^{-1}(\hat p) & \text{otherwise}, 
\end{cases}
\]
hence $\Jj_\eps$ can be rewritten as 
\[
\Jj_\eps = \int_\O \frac1{\mu_n} \bbK(\phi_\eps) \grad \hat \psi_\eps(\hat p_{n,\eps}) \cdot \grad v, \qquad \text{with\; $\hat \psi_\eps( \hat p)  = \int_{f_\eps(0)}^{\hat p} k_{\eps}(\hat \Ss_\eps(a)) \d a$}. 
\]
Define also the nonlinear function $\psi_\eps: [0,1] \to \R_+$ by 
\[\psi_\eps(s) =  \hat \psi_\eps(f_\eps(s))= \int_0^s k_{\eps}(a)(1-a) \gamma_\eps''(a) \d a, \qquad s \in [0,1].\]
It converges uniformly on $[0,1]$ towards $\psi$ defined in \eqref{eq:Kirchhoff.def} and therefore it is uniformly bounded in $[0,1]$ thanks to Assumption~(H\ref{H.gamma}). 
The function 
\[
\psi_\eps(s_{n,\eps}) - \hat \psi_\eps(\hat p_{n,\eps}) = 
\eps (\gamma_\eps(0) + \gamma_\eps'(0) - \hat p_{n,\eps} )^+
\]
vanishes on $\{s_{n,\eps} >0\}$ and, thus, owing to estimate~\eqref{eq:noreg.NRG}, it satisfies 
\be\label{eq:nonreg.errPsi}
\| \grad \big(\psi_\eps(s_{n,\eps}) - \hat \psi_\eps(\hat p_{n,\eps})\big)\|_{L^2(\O)^d}^2 
\leq C\eps^2 \| \grad \hat p_{n,\eps} \|_{L^2(\O)^d}^2
\leq C \eps
\ee 
for some $C$ possibly depending on $h>0$ but not on $\eps$. Moreover, since $0 \leq s_{n,\eps} \leq k_\eps(s_{n,\eps}) \leq 1$ and $k_\eps(s_{n,\eps}) \leq s_{n,\eps} + \eps$, and recalling~\eqref{eq:Kirchhoff.2}, we infer from Proposition~\ref{prop:noreg.NRG} that   
\begin{align*}
\| \grad \psi_\eps(s_{n,\eps})\|_{L^2(\O)^d}^2 \leq&\; \| \grad \hat \psi_\eps(\hat p_{n,\eps}) \|_{L^2(\O)^d}^2 \\
\leq &\;\int_\O k_\eps(s_{n,\eps}) |\grad \hat p_{n,\eps}|^2 \leq \| \grad \xi_\eps(s_{n,\eps}) \|_{L^2(\O)^d}^2 +
\eps  \| \grad \hat p_{n,\eps} \|_{L^2(\O)^d}^2 \leq C. 
\end{align*}
This implies that there exists some $\Psi \in H^1(\O)$ such that (up to a subsequence as usual)
\[
\psi_\eps(s_{n,\eps}) \underset{\eps\to0}\longrightarrow \Psi \quad \text{weakly in $H^1(\O)$ and almost everywhere}.
\]
Then because of~\eqref{eq:noreg.convsn} and of the uniform convergence of $\psi_\eps$ towards $\psi$, we can identify $\Psi$ as $\psi(s_n)$ thanks to arguments similar to those used previously for showing that $\Xi=\xi(s_n)$. 
Finally, \eqref{eq:nonreg.errPsi} shows that
\[
\grad \hat \psi_\eps(\hat p_{n,\eps}) \underset{\eps \to 0} \longrightarrow \grad \psi(s_n) \quad \text{weakly in } L^2(\O)^d. 
\]
Since $\bbK(\phi_\eps) \grad v$ converges strongly in $L^2(\O)^d$ towards $\bbK(\phi) \grad v$ thanks to Assumption~(H\ref{H.K}), we can pass to the limit in $\Jj_\eps$
\be\label{eq:noreg.cons.3}
\Jj_\eps \underset{\eps\to0}\longrightarrow \int_\O \frac1{\mu_n} \bbK(\phi) \grad \psi(s_n) \cdot \grad v. 
\ee
The combination of~\eqref{eq:noreg.cons.1}, \eqref{eq:noreg.cons.2} and \eqref{eq:noreg.cons.3} in \eqref{eq:noreg.cons.eps} gives \eqref{eq:noreg.lim.cons.n}, and ends the proof of Proposition~\ref{prop:noreg.lim}.
\end{proof}

\section{Concluding the proof of Theorem~\ref{thm:main}}\label{sec:time}

Let $\bX^0 = (\bphi^0, \bu^0, \theta^0, \pi^0 = M\theta^0)$ be as in~(H\ref{H.init}) and fix $h>0$. Then applying Proposition~\ref{prop:noreg.lim} recursively, we get sequences ${(\bX^j)}_{j\geq 0}$ and ${(\chi^j)}_{j \geq 1}$ of measurable functions such that, for all $j \geq 1$, 
$\bphi^j=(\phi_{n}^j, \phi_w^j) \in \Kk_\bphi$ a.e. in $\O$,  with $\phi^j = \phi_n^j + \phi_w^j\in H^1(\O)$ and $\xi(s_n^j) - \xi(s_n^D) \in V$  setting $s_n^j = \phi_n^j/\phi^j$, such that $\bu^j \in \bU$, such that  $\theta^j, \pi^j, \chi^j \in H^1(\O)$ with $\chi^j \in \ov \chi(\phi^j)$ a.e. in $\O$, fulfilling 
\be\label{eq:j.0}
\pi^j = M\theta^j \quad \text{and}\quad \phi^j - b \, \div \bu^j - \theta^j = \phi_r, 
\ee
such that 
\begin{alignat}{3}
\label{eq:j.cons.n}
\int_\O \frac{\phi_n^j - \phi_n^{j-1}}h w - \int_\O \frac{1}{\mu_n} \bbK(\phi^j) \left( \grad \psi(s_n^j) + s_n^j \grad \left((\pi^j +\chi^j) - \rho_n \g \cdot (\bx + \bu^j) \right) \right) \cdot \grad w &= 0, &\qquad &\forall w \in V,\\ 
\label{eq:j.cons.w}
\int_\O \frac{\phi_w^j - \phi_w^{j-1}}h w - \int_\O \frac{1}{\mu_w} \bbK(\phi^j) \left( - \grad \psi(s_n^j) + s_w^j \grad \left((\pi^j+\chi^j) -  \rho_w \g \cdot (\bx + \bu^j) \right) \right) \cdot \grad w &= 0, &\qquad& \forall w \in V,  
\end{alignat}
 and
\be\label{eq:j.mecha} 
\int_\O \left( 2 \mu \, \beps(\bu^j) : \beps(\bv) + \lambda (\div \bu^j)(\div \bv)\right) = \int_\O b \pi^j \, \div\bv + \int_\O \gravmech(\bphi^j) \cdot \bv, \qquad \forall \bv \in V^d.
\ee
Moreover, we deduce from the energy estimate~\eqref{eq:noreg.lim.NRG} that 
\begin{multline}\label{eq:j.NRG}
\Ff(\bX^j) 
+ \cter{cte:NRG.nreg.1} h \left( \|\xi(s_{n}^j) \|^2_{H^1(\O)} + \| \pi^j \|^2_{H^1(\O)} + \| \chi^j \|^2_{H^1(\O)}\right) \\
\leq  (1+2  \cter{cte:piH1.noreg.1}h) \left[\Ff(\bX^{j-1}) + \int_\O \gravmech(\bphi^j) \cdot (\bu^j - \bu^{j-1}) \right] + \sum_{\a \in \{n,w\}} \int_\O (\phi_{\a}^j - \phi_\a^{j-1})\, p_\a^D + \cter{cte:NRG.nreg.2} {h}
\end{multline}
with constants $\cter{cte:piH1.noreg.1}$, $\cter{cte:NRG.nreg.1}$ and $\cter{cte:NRG.nreg.2}$ depending neither on $j$ nor on $h$, which in particular also provides the uniform bound for the phase porosity increments in the dual norm
\be\label{eq:j.dtphi}
\|\phi_\a^j - \phi_\a^{j-1} \|_{V'} \leq \cter{cte:bound_dtphi} h \quad \forall \a \in\{n,w\}\text{ and }j\geq 1,
\ee
for a positive constant $\ctel{cte:bound_dtphi}$ not depending on $j$ and $h$.

We define the piecewise constant in time approximations $\bX_h = (\bphi_h = (\phi_{n,h}, \phi_{w,h}), \bu_h, \theta_h, \pi_h)$ and $\chi_h$ by setting 
\[
\bX_h(t,x) = \bX^j(x), \quad \chi_h(t,x) = \chi^j(x) \quad \text{for}\; (t,x) \in \big((j-1)h, jh\big] \times \O. 
\]
We also denote by $\phi_h = \phi_{n,h}+\phi_{w,h}$ and $s_{\a,h} = \phi_{\a,h} / \phi_h$ for $\a\in\{n,w\}$. We extend $\bX_h$ to negative times by setting 
$\bX_h(t) = \bX^0$ for $t\leq 0$. Moreover, we define the approximate time derivatives $\p_t^h \phi_{\a,h}$, $\a\in\{n,w\}$, and $\p_t^h \phi_{h}$ by 
\[
\p_t^h \phi_{\a,h}(t,\cdot) = \frac{\phi_\a^j - \phi_\a^{j-1}}h \qquad \text{for}\; t \in \big((j-1)h, jh\big), \qquad \text{and}\quad  \p_t^h \phi_{h} = \p_t^h \phi_{n,h}+\p_t^h \phi_{w,h}.
\]

We will now let $h$ tend to $0$. Once again, our proof relies on compactness arguments. In what follows, the limit $h\to0$ implicitly refers to the convergence when $\ell$ goes to $+\infty$ of a decreasing sequence $\left(h_\ell\right)_{\ell\geq 0}$ of positive times steps tending to $0$.

Let $T>0$ be an arbitrary finite time horizon, then summing~\eqref{eq:j.NRG} for $j=1,\dots, \lceil \frac Th \rceil$ leads to 
\begin{align}
\label{eq:pre_gronwall.j.1}
\Ff(\bX_h(T)) + &\; \cter{cte:NRG.nreg.1} \left( \|\xi(s_{n,h})\|_{L^2((0,T);H^1(\O))}^2  +  \|\pi_h \|_{L^2((0,T);H^1(\O))}^2 + \|\chi_h\|_{L^2((0,T);H^1(\O))}^2\right) \\
& \leq \Ff(\bX^0) +2  \cter{cte:piH1.noreg.1} \sum_{j=1}^{\lceil \frac Th \rceil} h \Ff(\bX^{j-1}) + (1 + 2h\cter{cte:piH1.noreg.1}) \sum_{j=1}^{\lceil \frac Th \rceil}\int_\O \gravmech(\bphi^j) \cdot (\bu^j - \bu^{j-1})
\nn
\\
&\qquad + \sum_{\a \in \{n,w\}} \int_\O (\phi_{\a,h}(T) - \phi_\a^0)\, p_\a^D +  \cter{cte:NRG.nreg.2}(T+1) 
\nn
\end{align}
Employing summation by parts, used that the summation can be extended to index $j=0$ corresponding to zero due to the above extension to negative times, utilizing that $\gravmech$ is uniformly bounded and affine due to (H\ref{H.gravity}), and employing the uniform bound~\eqref{eq:j.dtphi}, we obtain for a suitable constant $C>0$ that
\begin{align}
 \label{eq:pre_gronwall.j.2}
 &\sum_{j=1}^{\lceil \frac Th \rceil} \int_\O \gravmech(\bphi^j) \cdot (\bu^j - \bu^{j-1}) \\
&\qquad= \int_\O \left[ \gravmech(\bphi_h(T)) \cdot \bu_h(T) - \gravmech(\bphi^0) \cdot \bu^0 \right] - \sum_{j=1}^{\lceil \frac Th \rceil} \int_\O \left(\gravmech(\bphi^j) - \gravmech(\bphi^{j-1})\right) \cdot \bu^{j-1} 
\nn
 \\
 &\qquad\leq \int_\O \left[ \gravmech(\bphi_h(T)) \cdot \bu_h(T) - \gravmech(\bphi^0) \cdot \bu^0 \right] + \sum_{j=1}^{\lceil \frac Th \rceil} \sum_{\a \in \{n, w\}} \rho_a
 \left\| \phi_\a^j - \phi_\a^{j-1}  \right\|_{V'} \, \| \g \cdot \bu^{j-1} \|_V 
 \nn \\
 &\qquad\leq C  + \frac{1}{4} \Ff(X_h(T)) + \cter{cte:piH1.noreg.1} \sum_{j=1}^{\lceil \frac Th \rceil} h  \Ff(\bX^{j-1}).
 \nn
\end{align}
Furthermore, using the uniform boundedness of $\bphi$ and of $\bp^D$ results in
\be
\label{eq:pre_gronwall.j.3}
\sum_{\a \in \{n,w\}} \int_\O (\phi_{\a}^j - \phi_\a^{j-1})\, p_\a^D \leq C' 
\ee
for a suitable constant $C'>0$. Thus, combining~\eqref{eq:pre_gronwall.j.1}--\eqref{eq:pre_gronwall.j.3}, and assuming $2h\cter{cte:piH1.noreg.1} \leq 1$ results in
\begin{align}
 \label{eq:pre_gronwall.j.2}
 \frac12 \Ff(\bX_h(T)) + &\; \cter{cte:NRG.nreg.1} \left( \|\xi(s_{n,h})\|_{L^2((0,T);H^1(\O))}^2  +  \|\pi_h \|_{L^2((0,T);H^1(\O))}^2 + \|\chi_h\|_{L^2((0,T);H^1(\O))}^2\right) \\
&\leq \Ff(\bX^0) + 4  \cter{cte:piH1.noreg.1} \sum_{j=1}^{\lceil \frac Th \rceil} h \Ff(\bX^{j-1}) + C, \nn
\end{align}
for $C$ depending on $T$ but not on $h$.
Then we deduce from a discrete Gronwall lemma that, for all $T\geq0$, there holds
\begin{multline}\label{eq:j.NRG.2}
\Ff(\bX_h(T)) + \|\xi(s_{n,h})\|_{L^2((0,T);H^1(\O))}^2  +  \|\pi_h \|_{L^2((0,T);H^1(\O))}^2 + \|\chi_h\|_{L^2((0,T);H^1(\O))}^2\leq C (1+  \Ff(\bX^0)),
\end{multline}
where $C$ depends on $T$ but not on $h$. Assumption~(H\ref{H.init}) ensures that $\Ff(\bX^0)$ is finite, 
whereas we deduce from Assumption~(H\ref{H.Omega}), from \eqref{eq:j.NRG.2} and Korn's inequality that 
\be\label{eq:uh.L2H2}
\|\bu_h\|_{L^\infty((0,T);H^1(\O)^d)}+ \|\grad (\div\bu_h)\|_{L^2((0,T)\times\O)^d} \leq C. 
\ee
Then we deduce from the constraints~\eqref{eq:j.0} that 
\be\label{eq:phih.L2H1}
\|\theta_h\|_{L^2((0,T);H^1(\O))} \leq C, \qquad \|\phi_h\|_{L^2((0,T);H^1(\O))} \leq C.
\ee
Since $\psi\circ \xi^{-1}$ is $\frac12$-Lipschitz continuous, one further deduces from~\eqref{eq:j.NRG.2} that 
\be\label{eq:psi.L2H1}
\|\psi(s_{n,h})\|_{L^2((0,T);H^1(\O))} \leq C. 
\ee

Let $v: \R_+ \times \ov \O \to \R$ be a smooth function such that $v(t,x)=0$ for $(t,x) \in \R_+ \times \G^D$ as well as for $t \geq T$. 
For $h>0$ fixed, we define $\left(v^j\right)_{j\geq0} \subset V$ by 
\[
v^j = \frac1h\int_{(j-1)h}^{jh} v(t,\cdot) \d t \quad\text{for}\;  j \geq 1 \qquad\text{and}\quad v^0 = v(0,\cdot), 
\]
and by 
\[
v_h(t,x) = v^j (x)  \quad \text{for}\; (t,x) \in \big((j-1)h, jh\big] \times \ov\O. 
\]
Choosing $w = h v^j$ in~\eqref{eq:j.cons.n} and summing over $j \geq 1$ gives 
\be\label{eq:weak_h}
\iint_{\R_+\times\O} \p_t^h \phi_{n,h} v = \iint_{\R_+\times\O}  \frac{1}{\mu_n} \bbK(\phi_h) \left( \grad \psi(s_{n,h}) + s_{n,h} (\grad (\pi_h +\chi_h) - \rho_n \g \cdot (\bx + \bu_h) ) \right) \cdot \grad v. \ee
Then we infer from~\eqref{eq:j.NRG.2} and \eqref{eq:psi.L2H1} together with $0 \leq s_{n,h} \leq 1$ and the uniform boundedness of $\bbK(\phi_h)$ deduced from Assumption~(H\ref{H.K}) that 
\be\label{eq:ptphi_a.L2V'}
\left\|\p_t^h \phi_{n,h} \right\|_{L^2((0,T);V')} \leq C, \qquad \left\|\p_t^h \phi_{w,h} \right\|_{L^2((0,T);V')} \leq C,
\ee
the second estimate being similar, and thus that 
\be\label{eq:ptphi.L2V'}
\left\|\p_t^h \phi_{h} \right\|_{L^2((0,T);V')} \leq C.
\ee

We infer from~\eqref{eq:j.NRG.2} and~\eqref{eq:phih.L2H1} that there exists $\Xi, \pi,\chi$ and $\theta$ in $L^2((0,T);H^1(\O))$ such that, up to the extraction of a subsequence, there holds 
\begin{align}
\label{eq:limh.xi}\xi(s_{n,h}) \underset{h\to0} \longrightarrow \Xi \quad& \text{weakly in}\; L^2((0,T);H^1(\O)),\\
\label{eq:limh.pi}\pi_h \underset{h\to0} \longrightarrow \pi \quad& \text{weakly in}\; L^2((0,T);H^1(\O)), \\
\label{eq:limh.chi}\chi_h \underset{h\to0} \longrightarrow \chi \quad& \text{weakly in}\; L^2((0,T);H^1(\O)), \\
\label{eq:limh.theta}\theta_h \underset{h\to0} \longrightarrow \theta \quad& \text{weakly in}\; L^2((0,T);H^1(\O)), 
\end{align}
whereas~\eqref{eq:uh.L2H2} ensures the existence of some $\bu \in L^\infty((0,T);H^1(\O)^d)\cap L^2((0,T); \bU)$ such that, up to a subsequence, 
\be\label{eq:limh.u}
\bu_h  \underset{h\to0} \longrightarrow \bu \quad \text{in the $L^\infty((0,T);H^1(\O)^d)$-weak-$\star$ and $L^2((0,T);\bU)$-weak sense}. 
\ee
Since $\bphi_h$ belongs to the bounded convex subset $\Kk_\bphi$ a.e. in $\R_+\times \O$, we get that 
\be\label{eq:limh.bphi}
\bphi_h  \underset{h\to0} \longrightarrow \bphi \quad \text{in the}\; L^\infty(\R_+\times\O)^2\text{-weak-$\star$ sense}
\ee
with $\bphi(t,x) \in \Kk_\bphi$ for a.e. $(t,x) \in \R_+ \times \O$. We denote by $\phi = \phi_n+\phi_w$, so that, owing to~\eqref{eq:phih.L2H1}, 
\be\label{eq:limh.phi}
\phi_h  \underset{h\to0} \longrightarrow \phi \quad \text{in the}\; L^\infty(\R_+\times\O)\text{-weak-$\star$ and $L^2((0,T);H^1(\O))$-weak senses}, 
\ee
and $\phi_\flat \leq \phi \leq \phi^\sharp$ almost everywhere. 
Note that at this point, we have already enough material to pass to the limit in the linear relations
\[
\pi_h = M \theta_h, \qquad \phi_h - b \div \bu_h - \theta_h = \phi_r
\]
and 
\[
\iint_{(0,T) \times \O} 2 \mu \, \beps(\bu_h): \beps(\bv) + \lambda (\div \bu_h)(\div\bv) = \iint_{(0,T) \times \O} b\, \pi_h\, \div \bv + \int_\O \gravmech(\bphi_h) \cdot \bv, \qquad \bv \in L^2((0,T);V^d)
\]
to recover \eqref{eq:3.theta}, \eqref{eq:constraint}, as well as \eqref{eq:weak.u}. What remains in this section is devoted to the proof of the weak forms~\eqref{eq:weak.n} and \eqref{eq:weak.w} of the phase volume conservation equations~\eqref{eq:1}.

We deduce from~\eqref{eq:ptphi_a.L2V'} the existence of $\delta_n, \delta_w \in L^2((0,T); V')$ and of $\delta = \delta_n+ \delta_w$ such that 
\[
\p_t^h \phi_{\a,h} \underset{h\to0} \longrightarrow \delta_\a \quad \text{and}\quad \p_t^h \phi_{h} \underset{h\to0} \longrightarrow \delta \quad \text{weakly in}\; L^2((0,T);V'),\quad \a \in \{n,w\}. 
\]
Let $\varphi \in C^\infty_c((0,T)\times \O)$, and assume that $\varphi(t) = 0$ for $t \leq h$ and $t\geq T-h$, which is not a restriction since $h$ is going to tend to $0$. 
Then for $\a \in \{n,w\}$, there holds
\[
\iint_{(0,T) \times \O} \p_t^h \phi_{\a,h}\, \varphi = - \iint_{(0,T) \times \O} \phi_{\a,h} \tilde \p_t^h \varphi 
\]
where we have set 
\[
 \tilde \p_t^h \varphi(t,x) = \frac1h \int_{(j-1)h}^{jh} \frac{\varphi(\tau+h,x) - \varphi(\tau,x)}h \d \tau \quad\text{if} \; (t,x) \in \big((j-1)h, jh\big) \times \O.
\]
Since $\varphi$ is assumed to be regular, one readily checks that $ \tilde \p_t^h \varphi$ converges uniformly towards $\p_t \varphi$, hence 
\[
\iint_{(0,T) \times \O} \p_t^h \phi_{\a,h}\, \varphi \underset{h\to0} \longrightarrow \int_0^T \langle \delta_\a\, , \, \varphi \rangle_{V',V} = - \iint_{(0,T) \times \O} \phi_\a \p_t \varphi, 
\]
so that $\delta_\a = \p_t \phi_\a$, thus 
\be\label{eq:limh.ptphi}
\p_t^h \phi_{\a,h} \underset{h\to0} \longrightarrow \p_t \phi_\a \quad \text{and}\quad \p_t^h \phi_{h} \underset{h\to0} \longrightarrow \p_t \phi  \quad \text{weakly in}\; L^2((0,T);V'),\quad \a \in \{n,w\}. 
\ee
We further deduce from \eqref{eq:limh.phi} and \eqref{eq:limh.ptphi} that one can apply a discrete (here in time only) Aubin-Simon type result (see for instance~\cite{GL12} or \cite{ACM17}), to get that, up to a subsequence, 
\be\label{eq:limh.phi.2}
\phi_h \underset{h\to0}\longrightarrow \phi \quad\text{a.e. in }(0,T)\times \O. 
\ee

Denote by $s_n$ the $L^\infty((0,T)\times\O)$-weak-$\star$ limit of $s_{n,h}$, then the convergences \eqref{eq:limh.bphi} and \eqref{eq:limh.phi} show that $s_n = \phi_n/\phi$. In order to show that $s_{n,h}$ converges pointwise towards $s_n$, we take inspiration on what is done in~\cite{BBDM21}. Let $f:[0,1] \to \R$ be a continuous increasing function such that $f\circ\psi^{-1}$ and $f$ are both Lipschitz continuous, so that, thanks to~\eqref{eq:psi.L2H1}, there holds  
\be\label{eq:f.L2H1}
\| f(s_{n,h}) \|_{L^2((0,T);H^1(\O))} \leq C
\ee 
for some $C$ not depending on $h$. 
Let $\tau>0$ and $y \in \R^d$, and denote by $\O_y = \{x \in \O \; | \; [x,y] \subset \O\}$, then 
\[
\iint_{(0,T-\tau) \times \O_y} \big|f(s_{n,h}(t+\tau, x+y)) - f(s_{n,h}(t,x)) \big|^2\leq 2 (\Aa_h^\tau + \Bb_h^y)
\]
with 
\begin{align*}
\Aa_h^\tau  = &\; \iint_{(0,T-\tau) \times \O} \big|f(s_{n,h}(t+\tau, x)) - f(s_{n,h}(t,x)) \big|^2, \\
\Bb_h^y = & \; \iint_{(0,T) \times \O_y}  \big|f(s_{n,h}(t, x+y)) - f(s_{n,h}(t,x)) \big|^2.
\end{align*}
Because of \eqref{eq:f.L2H1} and of the classical characterization~\cite[Proposition 9.3]{Brezis11} of the space $H^1(\O)$, one can estimate $\Bb_h^y$ by 
\[
\Bb_h^y \leq C |y|^2
\]
for some $C$ not depending on $h$. 
Let us now adapt the ideas of~\cite{AL83} to the term $\Aa_\tau$. As $f$ is Lipschitz continuous with Lipschitz constant denoted by $L_f$, there holds 
\begin{align*}
\Aa_h^\tau \leq&\;  L_f  \iint_{(0,T-\tau) \times \O} \big(s_{n,h}(t+\tau) - s_{n,h}(t)\big) \big(f(s_{n,h}(t+\tau)) - f(s_{n,h}(t))\big) \\
\leq & \; \frac{L_f}{\phi_\flat}  \iint_{(0,T-\tau) \times \O} \big(\phi_{n,h}(t+\tau) - \phi_{n,h}(t)\big) \big(f(s_{n,h}(t+\tau)) - f(s_{n,h}(t))\big).
\end{align*}
Bearing in mind the definition of $\phi_{n,h}$, we get that 
\[
\Aa_h^\tau \leq \frac{L_f}{\phi_\flat}  \iint_{(0,T-\tau) \times \O} \sum_{j =\lceil \frac th \rceil +1}^{\lceil \frac{t+\tau}h \rceil} \left( \phi_n^j - \phi_n^{j-1} \right) \big(f(s_{n,h}(t+\tau)) - f(s_{n,h}(t))\big).
\]
Then using~\eqref{eq:j.cons.n} leads to 
\[
\Aa_h^\tau \leq h \frac{L_f}{\phi_\flat}  \iint_{(0,T-\tau) \times \O}  \sum_{j =\lceil \frac th \rceil +1}^{\lceil \frac{t+\tau}h \rceil} \left|\bF_n^j\right|\,  \left|\grad \big(f(s_{n,h}(t+\tau)) - f(s_{n,h}(t))\big) \right|
\]
where the flux 
\[
\bF_n^j = -\frac1\mu_n \bbK(\phi^j)\left( \grad \psi(s_n^j) + s_{n}^j  \grad \left((\pi^j + \chi^j) - \rho_n \g \cdot(\bx + \bu^j) \right)\right)
\]
satisfies 
\[
\int_\O \sum_{j=1}^{\lceil \frac T\tau \rceil} h \left| \bF_n^j \right|^2 \leq C
\]
in view of the a priori estimates \eqref{eq:j.NRG.2} and \eqref{eq:psi.L2H1}. Applying the Cauchy-Schwarz inequality gives 
\[
\Aa_h^\tau \leq \sqrt{\Aa_h^{\tau,(1)} \Aa_h^{\tau,(2)}},
\]
where
\[
\Aa_h^{\tau,(1)} = 2  \frac{L_f}{\phi_\flat}  \left(\sum_{j =\lceil \frac th \rceil +1}^{\lceil \frac{t+\tau}h \rceil} h \right) \| f(s_{n,h}) \|_{L^2((0,T);H^1(\O))}^2\overset{\eqref{eq:f.L2H1}}\leq C (\tau + h),
\]
and where 
\[
\Aa_h^{\tau,(2)} \leq  \frac{L_f}{\phi_\flat} \iint_{(0,T-\tau) \times \O}  \sum_{j =\lceil \frac th \rceil +1}^{\lceil \frac{t+\tau}h \rceil} h \left| \bF_n^j \right|^2 \leq C \tau.
\]
As a consequence, we can use the Riesz-Fréchet-Kolmogorov compactness criterion~\cite[Theorem 4.26]{Brezis11} to claim that, up to a subsequence, 
\[
f(s_{n,h}) \underset{h\to0} \longrightarrow \mathfrak f \quad \text{a.e. in}\; (0,T)\times \O. 
\]
As $f$ is increasing, this implies that $s_{n,h}$ converges also almost everywhere, and by uniqueness of the limit we have that  
\be\label{eq:limh.sn.ae}
s_{n,h} \underset{h\to0} \longrightarrow s_n \quad \text{a.e. in}\; (0,T)\times \O. 
\ee
In combination with \eqref{eq:psi.L2H1}, this ensures that, up to a subsequence, there holds 
\be\label{eq:limh.psi}
\psi(s_{n,h}) \underset{h\to0} \longrightarrow \psi(s_n) \quad \text{a.e. and weakly in } L^2((0,T);H^1(\O)).
\ee
We can now pass to the limit in~\eqref{eq:weak_h}. Thanks to~\eqref{eq:limh.ptphi}, one has 
\[
\iint_{\R_+\times\O} \p_t^h \phi_{n,h} v   \underset{h\to0} \longrightarrow \int_{\R_+} \langle \p_t \phi_\a, v \rangle_{V',V}. 
\]
Moreover, the convergences~\eqref{eq:limh.phi.2} and~\eqref{eq:limh.sn.ae} together with Assumption (H\ref{H.K}) on $\bbK$ show that 
\[
\bbK(\phi_h) \grad v  \underset{h\to0} \longrightarrow  \bbK(\phi) \grad v
\quad 
\text{and}
\quad 
s_{n,h} \bbK(\phi_h) \grad v  \underset{h\to0} \longrightarrow  s_n \bbK(\phi) \grad v
\quad \text{strongly in}\; L^2(\R_+\times \O).
\]
Combining this with the weak convergences~\eqref{eq:limh.pi},\ \eqref{eq:limh.chi},\ \eqref{eq:limh.psi}, and~\eqref{eq:limh.u}, one gets that 
\begin{multline*}
 \iint_{\R_+\times\O}  \frac{1}{\mu_n} \bbK(\phi_h) \left( \grad \psi(s_{n,h}) + s_{n,h} \grad \left((\pi_h +\chi_h) - \rho_n \g \cdot (\bx + \bu_h)\right) \right) \cdot \grad v \\
  \underset{h\to0} \longrightarrow
  \iint_{\R_+\times\O}  \frac{1}{\mu_n} \bbK(\phi) \left( \grad \psi(s_{n}) + s_{n} \grad \left((\pi +\chi) - \rho_n \g \cdot (\bx + \bu) \right) \right) \cdot \grad v, 
\end{multline*}
so that \eqref{eq:weak_h} gives~\eqref{eq:weak.n} at the limit $h\to0$. Similar arguments allow to recover \eqref{eq:weak.w}. 

The last step to conclude the proof of Theorem~\ref{thm:main} is to show that $\chi \in \ov \chi(\phi)$. 
Combining \eqref{eq:ptphi.L2V'} with \eqref{eq:limh.chi} and~\eqref{eq:limh.phi}, we can apply \cite[Proposition 3.8]{ACM17} which gives that 
\be\label{eq:limh.moussa}
\iint_{(0,T)\times\O} \phi_h\, \chi_h \, \varphi  \; \underset{h\to0} \longrightarrow\;  \iint_{(0,T)\times\O} \phi\, \chi \, \varphi
\ee
for all $\varphi \in L^2((0,T)\times \O)$. 
This allows to use Minty's trick to show that $\chi \in \ov \chi(\phi)$ a.e. in $\O$. Indeed, $\chi_h \in \ov \chi(\phi_h)$ is equivalent to the fact that, for all 
$(a,A) \in [\phi_\flat, \phi^\sharp]\times \R$ such that $A \in \ov \chi(a)$, then 
$
(\phi_h - a) (\chi_h - A) \geq 0. 
$
Therefore, for all $\varphi \in L^2((0,T)\times \O)$ with $\varphi \geq 0$, one has 
\[
\iint_{(0,T)\times \O} (\phi_h - a) (\chi_h - A) \varphi \geq 0. 
\]
In view of~\eqref{eq:limh.chi}, \eqref{eq:limh.phi} and \eqref{eq:limh.moussa}, letting $h$ tend to $0$ gives
\[
\iint_{(0,T)\times \O} (\phi - a) (\chi - A) \varphi \geq 0, \qquad \forall \varphi \in L^2((0,T)\times \O), \; \varphi \geq 0. 
\]
This implies that $(\phi - a) (\chi - A) \geq 0$ a.e. in  $(0,T) \times \O$, and since $(a,A)$ is arbitrary in the graph $\ov \chi$, then 
$
\chi \in \ov \chi(\phi)$ a.e. in $(0,T)\times \O.
$ 
This concludes the proof of Theorem~\ref{thm:main}.

\section{Conclusion and prospects}\label{sec:conclusion}

In this paper we showed the first global existence for degenerate multiphase poromechanics under some reasonable assumption of the domain $\O$, i.e. $\O$ is homogeneous in space and is such that the elasticity equation enjoy some regularity (H\ref{H.Omega}). The model includes, among others, capillarity effects, nonlinear pore pressure constitutive law, porosity dependent permeability and gravitational forces, in a thermodynamically consistent manner. For simplicity, we have restricted our analysis to the case of linear relative permeabilities, even though our purpose should transpose to mild nonlinear relative permeabilities. Some weak-coupling condition between the flow and the mechanics is also postulated, cf. (H\ref{H.weak-coupling}). 
This assumption was necessary in our analysis to get estimates on $\grad \pi$ and $\grad \xi$ separately. The assumption then becomes needless in the case where $\phi_\flat < \phi < \phi^\sharp$, yielding $\chi = 0$, but we are not able to guaranty this situation a priori. The main a priori estimate is derived from the time evolution of the Helmholtz free energy, which is decreasing up to contributions coming from the boundary and from gravity.

The extension of our global existence result to more complex frameworks is an open problem. Mathematical difficulties have to be bypassed here, in particular, as the proofs of Propositions \ref{prop:Pregdis.NRG} and \ref{prop:noreg.NRG} strongly rely on the fact that the problem is essentially homogeneous (or at least smoothly varying)  in space and on the weak coupling assumption~(H\ref{H.weak-coupling}). Extending the proof of Brenner and Sung~\cite{BS92} to prove (H\ref{H.Omega}) is also a challenging problem which can be of interest in other contexts.

Finally, numerical experiments should be carried out in future works. The rigorous proof of the convergence of numerical methods could even be achieved thanks to compactness arguments adapting ours.

\subsection*{Acknowledgements}
JWB and CC warmly thank Konstantin Brenner for stimulating exchanges on multiphase poromechanics. CC also thanks Jérôme Droniou and Roland Masson for enlightening discussions. 
This work was done in the framework of the GradFlowPoro project supported by the Campus France (project N° 48356QE) and Research council of Norway (RCN project \#331960). CC also acknowledges partial support from Labex CEMPI (ANR-11-LABX-0007-01). JWB also acknowledges partial support from the UoB Akademia-project FracFlow.

\subsection*{Statements}
The authors have no conflict of interest related to this work. 
This manuscript has no associated data.

\end{document}